\documentclass[12pt]{article}
\usepackage{amsfonts,amssymb,amsmath,amsthm, import}
\usepackage{graphicx}
\usepackage[all]{xy}
\bibliographystyle{amsalpha}

\usepackage{pstricks,psfrag,xmpmulti,amscd,color}

\newtheorem*{thmA}{Theorem A}

\newtheorem*{corA}{Corollary A}
\newtheorem*{thmB}{Theorem B}
\newtheorem*{thmC}{Theorem C}
\newtheorem*{thmD}{Theorem D}
\newtheorem*{corE}{Corollary D}

\setlength{\paperwidth}{210mm} 
\setlength{\paperheight}{297mm}
\setlength{\textheight}{215mm} 
\setlength{\textwidth}{155mm}
\setlength\oddsidemargin {\paperwidth}
\addtolength\oddsidemargin{-\textwidth} \divide\oddsidemargin by 2
\addtolength\oddsidemargin{-1in}
\addtolength\oddsidemargin{-\hoffset} \setlength\evensidemargin
{\oddsidemargin}
\setlength{\topmargin}{-0.5cm}
\setlength{\parskip}{0.1cm}
\baselineskip=24pt 


\newtheorem{thm}{Theorem}[section]

\newtheorem{cor}[thm]{Corollary}
\newtheorem{lem}[thm]{Lemma}
\newtheorem{prop}[thm]{Proposition}

\theoremstyle{definition}
\newtheorem{defn}[thm]{Definition}
\newtheorem{rem}[thm]{Remark}

\numberwithin{equation}{section}

\font\nt=cmr7

\def\note#1
{\marginpar
{\nt $\leftarrow$
\par
\hfuzz=20pt \hbadness=9000 \hyphenpenalty=-100 \exhyphenpenalty=-100
\pretolerance=-1 \tolerance=9999 \doublehyphendemerits=-100000
\finalhyphendemerits=-100000 \baselineskip=6pt
#1}\hfuzz=1pt}

\def\be{\begin{equation}}
\def\ee{\end{equation}}

\renewcommand{\epsilon}{\varepsilon}

\newcommand{\ra}{\rightarrow}

\def\sm{\smallsetminus}

\newcommand{\diam}{\operatorname{diam}}

\newcommand{\dist}{\operatorname{dist}}

\renewcommand{\AA}{{\cal A}}

\newcommand{\CC}{{\cal C}}

\newcommand{\KK}{{\cal K}}

\newcommand{\PP}{{\cal P}}
\newcommand{\QQ}{{\cal Q}}

\renewcommand{\SS}{{\cal S}}

\newcommand{\WW}{{\cal W}}

\newcommand{\C}{{\mathbb C}}

\newcommand{\D}{{\mathbb D}}

\newcommand{\N}{{\mathbb N}}

\newcommand{\R}{{\mathbb R}}

\newcommand{\Z}{{\mathbb Z}}

\renewcommand{\Im}{\operatorname{Im}}

\newcommand{\e}{\epsilon}
\newcommand{\ov}{\overline}
\newcommand{\fnkzk}{f^{n_k}(z_k)}

\renewcommand{\ra}{\rightarrow}
\renewcommand{\Re}{\text{Re }}
\newcommand{\Chat}{\hat{\C}}

\newcommand{\cprime}{{c'}}
\newcommand{\ctilde}{{\tilde{c}}}


\renewcommand{\Re}{\operatorname{Re }}
\renewcommand{\epsilon}{\varepsilon}

\renewcommand{\cprime}{{c'}}
\renewcommand{\ctilde}{{\tilde{c}}}

\newcommand{\Ynu}{Y^{(n+1)}}


\newcommand{\suno}{{\s_1}}
\newcommand{\sdue}{{\s_2}}
\newcommand{\szero}{{{\mathbf{s}}_0}}
\newcommand{\sn}{{{\mathbf{s}}_n}}

\newcommand{\Psinp}{{\Psi_{n+1}}}
\newcommand{\Psin}{{\Psi_{n}}}

\renewcommand{\sm}{{{\s}^{-}}}
\renewcommand{\sp}{{{\s}^{+}}}


\newcommand{\s}{{\mathbf{s}}}
\newcommand{\st}{{\tilde{\mathbf{s}}}}

\renewcommand{\H}{{\mathbb{H}}}

\newcommand{\tsc}{{t_{\s,c}}}
\newcommand{\gsc}{{g_{\s}^{c}}}
\newcommand{\gscp}{{g_{\s}^{\cprime}}}

\renewcommand{\phi}{\varphi}


\newcommand{\psin}{{\psi_n}}
\newcommand{\psinp}{{\psi_{n+1}}}

\newcommand{\fc}{f_c}
\newcommand{\fcp}{f_{\cprime}}

\newcommand{\PPfc}{{\PP(f_c)}}

\newcommand{\Pn}{{P^{(n)}}}
\newcommand{\Pnp}{{P^{(n+1)}}}

\newcommand{\Yl}{{{Y_\ell}}}
\newcommand{\Ylt}{{{Y_{\tilde{\ell}}}}}
\newcommand{\Ylp}{{{Y'_{\ell}}}}
\newcommand{\Yltp}{{{Y'_{\tilde{\ell}}}}}

\newcommand{\QC}{{\operatorname{QC}}}

\newcommand{\jk}{{j_k}}
\newcommand{\zk}{{z_k}}
\newcommand{\nk}{{n_k}}

\title{Expansivity properties and rigidity for non-recurrent exponential maps}
\author{\small Anna Miriam Benini \thanks{This work  was partly supported by a DGAPA-UNAM fellowship at IMATE, Cuernavaca, Mexico, as well as by the Institute for Mathematical Sciences in Stony Brook, US. This work was partially supported by the ERC grant HEVO Holomorphic Evolution Equations n. 277691.}
\\ 
\small Dipartimento di matematica \\
\small Universita' di Tor Vergata\\
\small V. della Ricerca Scientifica	\\   
\small Roma, Italy\\ 
\small {\tt ambenini$@$gmail.com} 
}  
\begin{document}

\maketitle
\begin{abstract}
\emph{We show that an exponential map $f_c(z)=e^z+c$ whose singular value $c$ is combinatorially non-recurrent and non-escaping is uniquely determined by its combinatorics, i.e. the pattern in which its periodic dynamic rays land together. We do this by constructing puzzles and parapuzzles in the exponential family.
We  also prove a theorem about hyperbolicity of the postsingular set in the case that the singular value is non-recurrent. Finally, we show that boundedness of the postsingular set implies combinatorial non-recurrence if $c$ is in the Julia set.}
\end{abstract}

 \section*{Introduction}
In this paper we study the family of exponential maps $f_c(z)=e^z+c$, with $c\in\C$.  This is a  one-parameter family of functions each of which has exactly one asymptotic value $c$ and no critical values, which  makes it the transcendental analogue of the one-parameter families of unicritical polynomials $P_c(z)=z^D+c$.  Moreover, exponential maps can be seen as analytic as well as dynamical limits of the families of unicritical polynomials $(1+\frac{z}{d})^d+c$ (see for example \cite{BD2}). This allows several combinatorial results for the dynamical and parameter plane for exponential maps to be be studied in analogy with corresponding results for unicritical polynomials (see \cite{RS3} and Section~\ref{Non-recurrence and combinatorial non-recurrence}).


  For an  exponential map, the  singular value is called  \emph{non-recurrent} if  the postsingular set 
 
 \begin{displaymath}\PP(f_c) =\overline{\underset{n>0}\bigcup\{f_c^n(c)\}}\end{displaymath}
 
\noindent  does not contain the singular value $c$ itself
 {Observe that, although it is a less common choice, we  define the postsingular set not to include the singular value.}    
 Whenever the postsingular set is bounded, it avoids a left-half plane hence, by forward invariance, it avoids a neighborhood of $c$; hence in this case the singular value is automatically non-recurrent. 
 On the other side, it might well be that the postsingular set is unbounded without the orbit of $c$ tending to infinity. {For example, in the case of a Siegel disk with diophantine  rotation number, the Siegel disk itself is unbounded by \cite{He}, and its boundary is accumulated by the orbit of $c$ by (\cite[Corollary 2.10]{RvS} (moreover, in this case $c$ is recurrent). Observe that our condition of non-recurrence is equivalent to the condition  that $\PP(\fc)\cap\H_M=\emptyset$ for some left half plane $\H_M:=\{z\in\C, \Re z<-M\}$, since by forward invariance this implies that $\PP(\fc)\cap D_{e^{-M}}(c)=\emptyset$.
 Observe also that, while in the polynomial case the critical values either escape to infinity or have bounded orbit, in  the exponential case the singular value could have an  unbounded orbit which does not converge to infinity (in fact, for many parameters the singular orbit  is dense in $\C$).
   
The first result in this paper is hyperbolicity of $\PP(\fc)$ in the case that $c$  is {in the Julia set and} non-recurrent. {By he above mentioned \cite[Corollary 2.10]{RvS}, if $c$ is non-recurrent there are no Siegel Disks, so we can assume this throughout the paper.}


A  forward invariant closed set $K$ is called \emph{hyperbolic} (with respect to the Euclidean metric) if there exist $\ov{k},\eta>1$ such that for any $k>\ov{k}$ and for any $z \in K$, $|(f^k)'(z)|>\eta$. Observe that, although the term 'hyperbolic' is normally used only for compact sets,  we  are allowing the set $K$ to be unbounded.
{
\begin{thmA}[Hyperbolic sets]
Let $f_c(z)=e^z+c$ be a non-recurrent exponential map, and  $K\subset J(f_c)$ be a forward invariant closed set not containing parabolic points and  such that $K\cap D_{e^{-M}}(c)=\emptyset$ for  some $M\in\R$. Then  $K$ is hyperbolic with respect to the Euclidean metric.
\end{thmA}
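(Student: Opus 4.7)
The plan is a Mañé-style contradiction argument, exploiting the fact that $f_c : \C \to \C \setminus \{c\}$ is a universal covering (the asymptotic value $c$ being omitted) together with hyperbolic contraction associated to non-recurrence. Suppose $K$ is not hyperbolic: then there exist $z_k \in K$ and $n_k \to \infty$ with $|(f_c^{n_k})'(z_k)| \leq C$. Because $K$ is forward invariant and $K \cap D_{e^{-M}}(c) = \emptyset$, every $z \in K$ satisfies $|f_c'(z)| = |e^z| = |f_c(z) - c| \geq e^{-M}$, so $K \subset \{\Re z \geq -M\}$. Using $|f_c'(z)| = |e^z|$ once more, the upper bound on the product of derivatives forces the entire orbit $\{f_c^j(z_k)\}_{j=0}^{n_k}$ to remain in a bounded vertical strip $S := \{-M \leq \Re z \leq M'\}$, with $M'$ depending only on $C$.

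Next I would pull back a small round disk $B_k := D_r(f_c^{n_k}(z_k))$ along the orbit of $z_k$. Because $c$ is omitted, every simply connected domain avoiding $c$ admits univalent inverse branches of $f_c$. I choose $r$ smaller than $\dist(K, c) \geq e^{-M}$, and (on the bounded piece where it is positive) smaller than $\dist(K, \PP(f_c))$; this ensures the intermediate pullbacks $V_{k,j} \ni f_c^{n_k-j}(z_k)$ avoid both $c$ and $\PP(f_c)$, since if $c \in V_{k,j}$ then $f_c^j(c) \in \PP(f_c)$ would lie in $B_k$. The domain $\Omega := \C \setminus (\PP(f_c) \cup \{c\})$ is hyperbolic, and forward invariance of $\PP(f_c) \cup \{c\}$ under $f_c$ gives $f_c^{-1}(\Omega) \subset \Omega$; hence $f_c : f_c^{-1}(\Omega) \to \Omega$ is an unbranched holomorphic covering, and by Schwarz--Pick the iterated pullbacks $V_{k,j}$ shrink strictly in the hyperbolic metric $\rho_\Omega$.

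To conclude, I would compare $\rho_\Omega$ with the Euclidean metric in $S$. Away from $\PP(f_c) \cup \{c\}$ the two metrics are comparable by standard hyperbolic density estimates, so Koebe distortion applied to the univalent branch $\phi_k : B_k \to V_k \ni z_k$ gives $\diam V_k \asymp r/|(f_c^{n_k})'(z_k)|$, and the hyperbolic shrinking of $V_k$ forces $\diam V_k \to 0$, contradicting the upper bound $|(f_c^{n_k})'(z_k)| \leq C$. The main obstacle I anticipate is the metric comparability when $K$ touches $\PP(f_c)$ (e.g.\ when $K = \PP(f_c)$ itself), since then $\rho_\Omega$ blows up near $K$: this would be handled either by taking $r$ small enough that $B_k$ still avoids $\PP(f_c)$ uniformly, or by a telescoping pullback argument that modifies $\Omega$ along the orbit. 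The hypothesis that $K$ contains no parabolic points enters precisely here, ruling out the one case where the hyperbolic contraction degenerates.
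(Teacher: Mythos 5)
Your proposal has genuine gaps, and two of them are fatal as stated. First, the claim that the bound $|(f_c^{n_k})'(z_k)|\leq C$ forces the orbit $\{f_c^j(z_k)\}_{j\leq n_k}$ into a strip $\{-M\leq \Re z\leq M'\}$ with $M'$ depending only on $C$ is false: from $\prod_{j<n_k} e^{\Re f_c^j(z_k)}\leq C$ and $\Re f_c^j(z_k)\geq -M$ one only gets $\Re f_c^j(z_k)\leq \log C+(n_k-1)M$, which grows with $n_k$; moreover nothing controls the imaginary parts, so orbit points can drift to infinity, exactly where the comparison between the hyperbolic metric of $\Omega=\C\setminus(\PP(f_c)\cup\{c\})$ and the Euclidean metric degenerates (the hyperbolic density is small far from $\PP(f_c)\cup\{c\}$, so small hyperbolic diameter does not yield small Euclidean diameter). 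The paper has to work precisely to handle this: in the proof of Theorem A the case $f_c^{n_k}(z_k)\to\infty$ is treated separately by producing times $j_k\to\infty$ with $f_c^{j_k+1}(z_k)$ in a fixed annulus and $|(f_c^{j_k+1})'(z_k)|\leq 1$ (using $|f_c'|\geq 2$ on $\{\Re z\geq\log 2\}$ and $\Re\geq -M$ on $K$), and Lemma~\ref{Mongolfiera} plus $2\pi i$-periodicity and normality handle pullbacks without finite accumulation points. Your proposal does not address this case at all.

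Second, the requirement $r<\dist(K,\PP(f_c))$ is vacuous in the principal case the theorem is for, namely $K=\PP(f_c)$ (Corollary A) or any $K$ meeting $\PP(f_c)$: then the base disks $B_k$ centered at points of $K$ cannot avoid $\PP(f_c)$, they are not contained in $\Omega$, and the Schwarz--Pick pullback scheme has no starting point. You flag this as an ``anticipated obstacle,'' but it is the heart of the theorem, not a technicality; the paper avoids it by proving Proposition~\ref{Local expansivity} directly with univalent pullbacks along $K$ (univalence coming from the regularity of non-parabolic points in \cite{RvS} under non-recurrence --- this, not a degeneration of hyperbolic contraction, is where the no-parabolic-points hypothesis enters), the Shishikura--Tan Lei estimate (Lemma~\ref{Shishitan}), and a normal-families argument using $K\subset J(f_c)$ to force limit inverse branches to be constant. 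Relatedly, your step ``hyperbolic shrinking of $V_k$ forces $\diam V_k\to 0$'' is unjustified: the inclusion $f_c^{-1}(\Omega)\subset\Omega$ gives only non-uniform strict contraction, and without a normality or uniform-contraction argument (which is exactly what Proposition~\ref{Local expansivity}(b) supplies, using that points of $K$ lie in the Julia set) the diameters need not tend to zero.
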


\begin{corA}[Hyperbolicity of the postsingular set]
Let $f_c(z)=e^z+c$ such that $c\in J(f_c)$ is non-recurrent. Then  $\PP(\fc)$ is hyperbolic with respect to the Euclidean metric.
\end{corA}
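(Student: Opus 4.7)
The plan is to apply Theorem~A with $K:=\PP(\fc)$, so it suffices to verify its four hypotheses for this specific $K$. Forward invariance and closedness are immediate from the definition $\PP(\fc)=\overline{\bigcup_{n>0}\{\fc^n(c)\}}$. The inclusion $K\subset J(\fc)$ uses the standing hypothesis $c\in J(\fc)$: since the Julia set is closed and forward invariant, it contains the closure of any forward orbit that enters it.

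For the disk-avoidance condition, I would simply invoke the reformulation of non-recurrence already recorded in the introduction: non-recurrence is equivalent to $\PP(\fc)\cap\H_M=\emptyset$ for some $M$, and forward invariance of $\PP(\fc)$ under $\fc$ then yields $\PP(\fc)\cap D_{e^{-M}}(c)=\emptyset$.

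The remaining hypothesis, absence of parabolic periodic points in $K$, is the one place where $c\in J(\fc)$ must be used substantively. If $\fc$ had a parabolic cycle, the classical singular-value attraction principle for the exponential family (there is only one singular value, and it must lie in the basin of any non-repelling cycle) would force $\fc^n(c)$ to converge to that cycle, placing $c$ in a parabolic Fatou component and contradicting $c\in J(\fc)$. Hence no parabolic cycle exists at all, so in particular $K$ contains no parabolic points. With all hypotheses verified, Theorem~A applies and yields hyperbolicity of $\PP(\fc)$.

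The only step that is not pure bookkeeping is the exclusion of parabolic cycles, which rests on invoking the singular-value attraction principle for exponentials; I expect this to be the main (and essentially only) obstacle in the argument.
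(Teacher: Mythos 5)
Your proposal is correct and follows essentially the same route as the paper, which leaves the deduction implicit: it simply observes (in Section~1) that when $c$ is non-recurrent the postsingular set satisfies the hypotheses on $K$ in Theorem~A, with the absence of parabolic cycles for $c\in J(\fc)$ resting, as in your argument, on the fact that the unique singular value of an exponential map must lie in the basin of any non-repelling cycle. Your explicit verification of the four hypotheses, including the disk-avoidance via the half-plane reformulation of non-recurrence, is exactly the intended bookkeeping.
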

}

 The rest of the paper is devoted to problems related to {\emph{rigidity}}. 
As in the polynomial setting, it is a natural and relevant question to ask  whether there exist  conditions which can be checked  in the dynamical plane  under which  two maps in the exponential  family are conformally conjugate, and hence correspond to the same parameter up to translation by $2\pi i$.
In this paper we give a sufficient combinatorial condition for this to happen, under the hypothesis of a specific kind of non-recurrence. 

The set of \emph{escaping points} 
\[I(f_c):=\{z\in\C: |f_c^{n}(z)|\ra\infty\}\]
 can be naturally described as a union of injective curves, called  \emph{dynamic rays} or \emph{hairs}, labelled by sequences in $\Z^\N$ in such a way that the dynamics of $f_c$ on the rays  is conjugate to the dynamics of the shift map $\sigma$ on $\Z^\N$ (see  \cite{BD1}, \cite{SZ1} and Section \ref{Dynamic rays} in this paper for existence and properties of dynamic rays in the exponential family).
Informally, dynamic rays can be thought of as curves from $(0,\infty)$ to $\C$, which tend to infinity as the parameter $t\ra\infty$.  If the limit  $t\ra0$ exists, we say that the dynamic ray \emph{lands}; {unfortunately this is not always the case, see for example \cite{Ja} for examples of non-landing rays in the exponential family.}  A  dynamic ray is called \emph{periodic} or \emph{preperiodic} if it is a periodic or preperiodic set under the dynamic of $f_c$; it is shown in \cite{Re1} that periodic and preperiodic rays land unless their forward orbit contains the singular value.

Two periodic or preperiodic dynamic rays landing at the same point, together with their common endpoint, form a curve $\Gamma$ disconnecting the plane. Given two such dynamic rays, we say that two points are \emph{separated} (by $\Gamma$) if they belong to different connected components of $\C\setminus\Gamma$.

A fundamental  question is  under which conditions the combinatorial data describing which periodic and preperiodic rays land together completely encodes the actual dynamics, and hence determines  the position of the map in the parameter plane uniquely. This property is  usually referred to as \emph{rigidity}.
Stated in this terms, rigidity can be asked for non-escaping parameters which do not belong to the closure  of hyperbolic components (or equivalently, non-escaping parameters for which all periodic point are repelling). Two maps $ \fc, \fc$ are called \emph{combinatorially equivalent} if their periodic and preperiodic rays land together in the same pattern (see  Definition~\ref{Combinatorial equivalence def} in Section~\ref{Dynamic rays}).

{A positive answer to the question of rigidity would imply density of hyperbolic parameters in the exponential parameter plane. The analogous rigidity  conjecture in the  parameter spaces of quadratic  polynomials is equivalent to the famous  MLC conjecture, according to which the Mandelbrot set is locally connected (see \cite[Theorem 10]{RS3}), and again implies density of hyperbolicity.  See \cite{RS3} for a description of the  parallel between rigidity problems in exponential versus polynomial setting, and \cite{Be} for rigidity of Misiurewicz paramters in the exponential family.}

The part  of the paper devoted to rigidity problems is structured as follows. After introducing the necessary combinatorial background in exponential dynamics, we  define Yoccoz puzzle and parapuzzles in the exponential setting in Section~\ref{Puzzles}.  
We then restrict ourselves  to the class of  \emph{combinatorially non-recurrent} parameters: a parameter $c$ is \emph{combinatorially non-recurrent} if  there is a suitable  collection (see Section \ref{Puzzles}) of preperiodic rays  which separate the singular value from the  postsingular set. 
Under the assumption of combinatorial non-recurrence we prove that two combinatorially equivalent maps are quasiconformally conjugate.

\begin{thmB}
Let $c, c'$ be non-escaping parameters, and $f_c$ be combinatorially non-recurrent. If $f_\cprime$ is combinatorially equivalent to $f_c$, then $f_\cprime$ is quasiconformally conjugate to $\fc$.
\end{thmB}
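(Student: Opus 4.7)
The plan is to construct the quasiconformal conjugacy by a pullback argument built on the puzzles from Section \ref{Puzzles}. Because $\fc$ is combinatorially non-recurrent, there is a depth $n_0$ at which the level-$n_0$ puzzle piece $P^{(n_0)}$ surrounding the singular value is disjoint from the rest of the postsingular orbit. Combinatorial equivalence of $\fc$ and $\fcp$ provides a canonical bijection between puzzle pieces at every depth, matching boundary rays, landing points, and combinatorial degrees. My first step is to construct an initial quasiconformal homeomorphism $\Phi_0:\C\to\C$ that sends each level-$n_0$ puzzle piece of $\fc$ to the corresponding piece of $\fcp$, conjugates the dynamics on the ray-and-endpoint skeleton bounding these pieces, and is smooth in the interior of each piece. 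In logarithmic coordinates the two exponentials behave affinely near infinity and their rays are quasi-isometric, so such a $\Phi_0$ exists; away from infinity the interior of each puzzle piece is a topological disk and can be filled in by any quasiconformal homeomorphism matching the boundary.

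Next I would inductively lift $\Phi_0$ along the dynamics. Assuming $\Phi_n$ is a quasiconformal map of $\C$ that conjugates $\fc$ to $\fcp$ on the boundaries of level-$(n_0+n)$ puzzle pieces and sends each such piece to its combinatorial partner, I pick any level-$(n_0+n+1)$ piece $Q$ and its partner $Q'$: the restrictions $\fc\colon Q\to\fc(Q)$ and $\fcp\colon Q'\to\fcp(Q')$ are proper maps of matching degrees (univalent on pieces not containing $c$, with a single asymptotic-value lift on pieces that do), so $\Phi_n|_{\fc(Q)}$ has a unique lift $Q\to Q'$. Gluing these lifts along the new skeleton defines $\Phi_{n+1}$. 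Holomorphic pullback preserves dilatation, hence every $\Phi_n$ has the same dilatation as $\Phi_0$, and $\Phi_{n+1}=\Phi_n$ on the skeleton of depth $\leq n_0+n$, so the sequence stabilizes on an exhausting family of skeleta.

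Taking the limit $\Phi=\lim\Phi_n$ and invoking Corollary A produces the desired conjugacy. Hyperbolicity of $\PP(\fc)$ forces the nested puzzle pieces around each Julia set point to shrink to that point, so the union of the skeletons is dense in $J(\fc)$ and the sequence $\Phi_n$ converges uniformly on compact subsets of $\C$. The limit $\Phi$ is quasiconformal with dilatation at most that of $\Phi_0$, and by construction $\Phi\comp\fc=\fcp\comp\Phi$. The main obstacle is precisely this shrinking step: exponential puzzle pieces are unbounded, so the classical Yoccoz shrinking arguments do not apply verbatim. One has to work in logarithmic coordinates, where the exponential contraction of horizontal widths under $\fc$ combines with the uniform Euclidean expansion on $\PP(\fc)$ supplied by Corollary A to yield genuine shrinking of the pieces around the singular orbit; the Markov structure of the puzzle then propagates shrinking to every point of $J(\fc)$.
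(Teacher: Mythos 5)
Your pullback scheme has a genuine gap at the branching/singular pieces, and it is exactly the gap that the paper's two-stage structure is designed to avoid. You pull back on \emph{every} puzzle piece at once, including the branching piece $Y^{(n+1)}_*$, which maps onto the singular piece as an infinite-degree covering with image $Y^{(n)}_{\rm sing}\setminus\{c\}$ (recall $c$ is an omitted value). A lift of $\Phi_n\comp \fc$ through the covering $\fcp\colon Y'_*\to Y'_{\rm sing}\setminus\{\cprime\}$ exists only if $\Phi_n$ maps $Y^{(n)}_{\rm sing}\setminus\{c\}$ into $Y'_{\rm sing}\setminus\{\cprime\}$, i.e.\ only if $\Phi_n(c)=\cprime$; and, chasing the induction, $\Phi_{n+1}(c)=\cprime$ forces $\Phi_n(\fc(c))=\fcp(\cprime)$, $\Phi_{n-1}(\fc^2(c))=\fcp^2(\cprime)$, \dots, all the way back to $\Phi_0(\fc^{n+1}(c))=\fcp^{n+1}(\cprime)$. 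So your initial map would have to respect the \emph{entire} singular orbit, an infinite set that does not (as far as we know at this stage) move holomorphically over the parapuzzle piece; an interpolation matching only the level-$n_0$ skeleton and finitely many marked points will not do it, and at some stage a point of the branching piece gets sent by $\Phi_n\comp\fc$ to $\cprime$, where no lift exists. Notice also that combinatorial non-recurrence, which you invoke only to pick $n_0$, plays no role in your pullback — a warning sign, since its whole purpose in the paper is to guarantee that the pullback along the postsingular set never meets the singular value, so that only univalent inverse branches are used. The paper therefore first pulls back only on the pieces $P^{(n)}$ meeting $\PP(\fc)$ (leaving the map unchanged elsewhere) to produce a quasiconformal map that conjugates on $\PP(\fc)$ and in particular sends the singular orbit of $\fc$ to that of $\fcp$; only \emph{then} does it lift globally through the covering $\fc\colon\C\to\C\setminus\{c\}$, where the base-point condition is now available, and finish by density of $\bigcup_n\fc^{-n}(\PP(\fc))$.

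Two further points. First, your convergence argument rests on the claim that hyperbolicity of $\PP(\fc)$ (Corollary A) makes the nested puzzle pieces around \emph{every} Julia point shrink to that point; hyperbolicity gives expansion only along the postsingular set and yields nothing about pieces around arbitrary points of $J(\fc)$ — such shrinking would be close to triviality of dynamical fibers, far stronger than what is known or needed. The paper never uses shrinking: convergence comes from compactness of uniformly $K$-quasiconformal maps fixing three points, uniqueness of the limit from the zero-area statement (Corollary~\ref{zero area}, a consequence of Theorem A), and the conjugacy on all of $\C$ from density of the preimages of $\PP(\fc)$ plus continuity. Second, your initial map $\Phi_0$ is built by hand-filling each piece with a quasiconformal interpolation matching a prescribed boundary conjugacy; extending a homeomorphism of the boundary of an unbounded piece quasiconformally is not automatic (one needs quasisymmetry-type control), which is why the paper instead obtains the initial map from the holomorphic motion of the level-$N$ rays over the parapuzzle piece together with the Bers--Royden extension.
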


In Section~\ref{Rigidity} we show actual rigidity for combinatorially non-recurrent parameters. For parameters with bounded postsingular set, this can be obtained as a corollary of Theorem B together with a result of \cite{RvS} on the absence of invariant line fields (see also \cite{MS}). Otherwise, the proof is more involved. The final result is the following:

\begin{thmC}
Let $c, c'$ be non-escaping parameters, and $f_c$ be combinatorially non-recurrent.
If $f_\cprime$ is combinatorially equivalent to $f_c$, then $\cprime=c$.
\end{thmC}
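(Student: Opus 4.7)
The plan is to promote the quasiconformal conjugacy from Theorem~B to the equality $c' = c$. Let $\phi:\C\to\C$ be the quasiconformal conjugacy between $f_c$ and $f_{c'}$ furnished by Theorem~B, and let $\mu=\bar\partial\phi/\partial\phi$ be its Beltrami coefficient. Then $\mu$ is $f_c$-invariant and satisfies $\|\mu\|_\infty<1$. The goal is to show $\mu\equiv 0$ almost everywhere; this forces $\phi$ to be conformal and hence affine, and combining with the conjugacy relation $\phi\circ f_c=f_{c'}\circ\phi$ and the specific form of exponential maps then yields $c'=c$.

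The argument splits according to whether $\PP(f_c)$ is bounded. If $\PP(f_c)$ is bounded, then standard arguments dispose of $\mu$ on the Fatou set: combinatorial non-recurrence rules out Siegel disks via \cite[Corollary~2.10]{RvS}, exponential maps have no wandering domains, and on attracting or parabolic basins there can be no nontrivial invariant Beltrami differential. On the Julia set, the absence of measurable invariant line fields established in \cite{RvS} (see also \cite{MS}) forces $\mu=0$ a.e., which is precisely the route indicated in the paragraph preceding Theorem~C. When $\PP(f_c)$ is unbounded, a different strategy is needed: I would invoke the puzzle and parapuzzle machinery constructed in Section~\ref{Puzzles}. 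By combinatorial equivalence, $c'$ lies in the same parapuzzle piece as $c$ at every depth of the construction, so it suffices to show that the nested parapuzzle pieces $\mathbf{P}^{(n)}$ around $c$ shrink to the single point $\{c\}$ as $n\to\infty$. Via the standard tubing and holomorphic motion correspondence between parameter and dynamical planes, this reduces to showing the nested dynamical puzzle pieces $P^{(n)}(c)$ around the singular value shrink to $\{c\}$. Dynamical shrinking, in turn, should follow from a Yoccoz-style argument leveraging the hyperbolicity of $\PP(f_c)$ from Corollary~A (providing expansion along the relevant postsingular set) together with combinatorial non-recurrence (providing the clean nested structure of puzzle pieces).

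The main obstacle, as already flagged in the introduction, is the unbounded-postsingular case. The difficulty is twofold: first, adapting the Yoccoz shrinking-puzzle-pieces argument to the transcendental exponential setting, where ordinary modulus and area estimates must contend with the postsingular set accumulating at infinity and with punctured rather than annular geometry at the top of the puzzle; and second, carefully transferring dynamical shrinking into parameter shrinking, that is, setting up the tubing between the two planes so that it produces a well-controlled holomorphic motion and survives the passage to the limit. Combining these two ingredients within the combinatorial framework of Section~\ref{Puzzles}, using the hyperbolicity input from Corollary~A as the only analytic handle, is where I expect the real work of Theorem~C to lie.
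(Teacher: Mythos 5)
Your treatment of the bounded case is essentially the paper's: Theorem~B gives a quasiconformal conjugacy, Theorem~\ref{Absence of line fields} kills the line field on the Julia set, Weyl's Lemma makes the conjugacy conformal (hence affine, so $\cprime=c+2\pi i n$), and combinatorial equivalence removes the $2\pi i\Z$ ambiguity. (A small simplification you could have used: combinatorial non-recurrence already forces $J(\fc)=\C$, so there is no Fatou set to discuss.) That half is fine.

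The unbounded case, however, is where your proposal has a genuine gap rather than a proof. You reduce everything to showing that the nested parapuzzle pieces around $c$, equivalently the dynamical puzzle pieces around the singular value, shrink to points, and you propose to get this from a Yoccoz-style argument with Corollary~A as the expansion input. No such argument is given, and none is available in the paper: hyperbolicity of $\PP(\fc)$ provides expansion along the postsingular set but no modulus or area control on puzzle pieces whose boundaries are unbounded ray pairs, and the tubing/holomorphic-motion transfer from dynamical to parameter shrinking is not constructed anywhere. Shrinking of (para)puzzle pieces is precisely the hard statement (it would amount to a triviality-of-fibers result much stronger than Theorem~C itself), and the paper deliberately circumvents it. The actual route is an open--closed argument: assuming $c\neq\cprime$, Theorem~B together with the Measurable Riemann Mapping Theorem shows the quasiconformal class $\QC(c)$ is open (Lemma~\ref{Quasiconformal classes are open}); since quasiconformal conjugacy implies topological conjugacy and parameter rays cannot cross non-hyperbolic components, one gets $\QC(c)=\QQ(c)=F_R(c)$, the reduced fiber; and Lemma~\ref{Boundaries of non-hyperbolic components} (no escaping parameters accessible from inside $\QQ(c)$, using Lemma~\ref{17} and the approximation of parameter rays by hyperbolic curves) produces a non-escaping boundary point of $\QQ(c)$ lying in the fiber, so $F_R(c)$ would be open yet contain one of its boundary points --- a contradiction. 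Unless you can actually carry out the puzzle-shrinking step, your plan for the unbounded case remains a program, and you should either supply that (substantially harder) argument or switch to a deformation/open--closed argument of the above type.
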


We conclude  by showing  that if $c$ is the landing point of a dynamic ray with non-recurrent address (see Section~\ref{Non-recurrence and combinatorial non-recurrence}), then the usual notion of recurrence implies the stronger notion of combinatorial non-recurrence. 
{\begin{thmD}
Let $\fc$ be a {non-escaping} exponential map such that $c$ is non-recurrent and is the landing point of a dynamic ray $g_\s$ such that the length of $g_{\sigma^{n}\s}(0,t)\ra 0 $ uniformly in $n$ as $t\ra0$. Then $c$ is combinatorially non-recurrent. 
 \end{thmD}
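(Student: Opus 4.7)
The strategy is to build a finite collection of preperiodic dynamic rays whose landing configuration separates $c$ from the postsingular set $\PP(\fc)$. The three ingredients are Corollary A (which makes $\PP(\fc)$ hyperbolic), the characterization of non-recurrence recalled in the introduction (which produces a $\delta>0$ with $D_\delta(c)\cap\PP(\fc)=\emptyset$), and the uniform shrinking hypothesis. Using the latter, I fix $t_0>0$ such that for every $n\geq 0$ the initial segment $g_{\sigma^n\s}(0,t_0)$ is contained in $D_{\delta/4}(f_c^n(c))$; shrinking $\delta$ if necessary, the hyperbolicity of $\PP(\fc)$ lets me choose the inverse branches of $f_c$ along the postsingular orbit so that they contract $D_\delta(f_c^{n+1}(c))$ inside $D_{\delta/2}(f_c^n(c))$, keeping the ray tails trapped in small neighbourhoods of the orbit.

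Next I would approximate $\s$ by a preperiodic external address $\st$ that agrees with $\s$ on a long initial block. By Rempe's landing theorem for (pre)periodic rays, $g_{\st}$ lands at a repelling preperiodic point $w$. Because $\st$ is a small combinatorial perturbation of $\s$ and because backward iteration along the orbit contracts by hyperbolicity, $w$ lies in $D_{\delta/4}(c)$; the same estimate applied to the shifts shows that the landing point of $g_{\sigma^n \st}$ lies in $D_{\delta/4}(f_c^n(c))$ for every $n\geq 0$. At the repelling preperiodic point $w$ there are at least two preperiodic rays landing, and by choosing $\st$ among the admissible preperiodic approximations of $\s$ one can obtain a pair of co-landing preperiodic rays at $w$ whose union is a simple curve $\Gamma_0$ with $c$ and $\PP(\fc)$ in different components of its complement.

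Since $\Gamma_0$ (and any preperiodic pullbacks needed to close up a puzzle graph around $c$, obtained by iterating the same construction along the postsingular orbit) sits inside the union of the $\delta/2$-disks around the postsingular orbit points, and these disks are disjoint from $\PP(\fc)$, the entire separating graph is disjoint from $\PP(\fc)$. This is precisely combinatorial non-recurrence of $c$ in the sense of Section~\ref{Puzzles}. I expect the main obstacle to be the step in which one chooses the second co-landing preperiodic ray at $w$ on the correct side of $g_\s$, so that $\Gamma_0$ really separates $c$ from $\PP(\fc)$ rather than leaving them in the same component; this is where the uniform character of the shrinking hypothesis (uniform in $n$, not only for the address $\s$ itself) is essential, because it pins all shifts of $g_\s$ simultaneously near the postsingular orbit points and so controls the homotopy class of the separating graph relative to $\PP(\fc)$.
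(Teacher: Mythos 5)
There is a genuine gap, and it sits exactly at the point you flag as ``the main obstacle.'' Your construction needs, at the preperiodic point $w$, a \emph{pair} of preperiodic rays landing together whose union separates $c$ from $\PP(\fc)$, but nothing in your argument produces this. First, it is not true in general that a repelling preperiodic point of an exponential map is the landing point of at least two rays; many such points are endpoints of a single hair, in which case the closure of $g_{\st}$ does not disconnect the plane at all. Second, even when several rays do land at $w$, deciding \emph{which} rays co-land is precisely the combinatorial information the theorem is about: the uniform shrinking of $g_{\sigma^n\s}(0,t)$ and the hyperbolicity of $\PP(\fc)$ (Corollary A) give metric control of ray tails near the postsingular orbit, but they carry no information about the landing pattern, and no amount of ``choosing $\st$ among admissible preperiodic approximations'' is shown to yield a curve with $c$ and $\PP(\fc)$ on opposite sides. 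Note also that combinatorial non-recurrence, as defined in Section~\ref{Puzzles}, asks for a puzzle piece: the separating curves must be level-$n$ preimages of a finite forward-invariant graph $\Gamma$ of \emph{periodic} rays, so an ad hoc pair of co-landing preperiodic rays would still have to be realized inside such a graph, which your sketch does not address.

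The paper resolves exactly this difficulty by leaving the exponential plane altogether: since $\s$ is a non-recurrent address (Lemma~\ref{Non recurrent addresses}), the parameter ray of angle $\s$ for unicritical polynomials of sufficiently high degree lands at an at most finitely renormalizable, non-recurrent parameter $\ctilde$ (Proposition~\ref{Landing of non-recurrent rays}, via Lemma~\ref{Finitely renormalizable}, Yoccoz rigidity and Kahn--Lyubich local connectivity); there the Yoccoz puzzle pieces shrink to points, so some puzzle level separates $\ctilde$ from $\PP(f_{\ctilde})$ by finitely many preperiodic rays; and the corresponding co-landing pattern is transported back to the exponential dynamical plane through the parabolic and Misiurewicz wake correspondences (Propositions~\ref{Misiurewicz wakes correspondence} and \ref{Parabolic wakes correspondence}) together with the vertical order of rays. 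In other words, the existence of the separating co-landing configuration is extracted from polynomial rigidity and the exponential--polynomial combinatorial dictionary, not from expansion estimates near $\PP(\fc)$; to repair your argument you would need an independent proof of that co-landing statement in the exponential plane, which is not supplied.
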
}
Observe that hyperbolic and parabolic parameters are non-recurrent in the usual sense but not in the combinatorial sense.

 Using a   theorem from \cite{BL} about accessibility of the singular value in the case in which $\PP(f)$ is bounded and contained in $J(f)$, we have the following corollary:

 {\begin{corE}
Let $\fc$ be an exponential map such that $c\in J(\fc)$ and $\PP(\fc)$ is bounded. Then $c$ is combinatorially non-recurrent. 
 \end{corE}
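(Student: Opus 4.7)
The plan is to verify the hypotheses of Theorem D for a suitable dynamic ray landing at $c$, and then invoke that theorem directly.

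First, I would check that $\fc$ is non-escaping and that $c$ is non-recurrent. Non-escaping is immediate from the hypothesis $c\in J(\fc)$. For non-recurrence, the introduction already gives the argument: since $\PP(\fc)$ is bounded, it avoids some left half-plane $\H_M=\{\Re z<-M\}$, and the inclusion $\fc^{-1}(D_{e^{-M}}(c))\subset\H_M$ combined with forward invariance of $\PP(\fc)$ forces $\PP(\fc)\cap D_{e^{-M}}(c)=\emptyset$. Next, the accessibility theorem of \cite{BL} cited just before the corollary provides an address $\s$ such that $g_\s$ lands at $c$; by shift-equivariance of the dynamics on rays, $g_{\sigma^n\s}$ lands at $\fc^n(c)\in\PP(\fc)$ for each $n\geq 1$, so every landing point lies in the fixed bounded set $\PP(\fc)\cup\{c\}$.

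The main step is then to verify the uniform length estimate: $\length(g_{\sigma^n\s}(0,t))\to 0$ as $t\to 0$, uniformly in $n$. My approach combines two ingredients. Corollary A gives hyperbolicity of $\PP(\fc)$ for the Euclidean metric, so $\fc$ is uniformly expanding at every landing point $\fc^n(c)$, and I can pull small round neighborhoods of $\fc^n(c)$ back through $\fc^n$ with uniformly bounded geometric distortion. On the other hand, boundedness of the landing points forces the entries of $\sigma^n\s$ to be uniformly bounded (since the inverse branch selected by $s_n$ must map the bounded set containing $\fc^n(c)$ back into the bounded set containing $\fc^{n-1}(c)$), so the standard asymptotic description of exponential dynamic rays (cf.\ \cite{SZ1}, \cite{BD1}) supplies uniform tail control. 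Together these should transfer a single modulus of continuity at $t=0$ to all shifted rays simultaneously, yielding the required uniform length estimate. Once this is in hand, Theorem D applies and concludes that $c$ is combinatorially non-recurrent.

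I expect the uniform length estimate to be the main obstacle. Each individual shifted ray has length tending to zero at its landing point, but a priori the initial arcs could oscillate more and more wildly as $n$ grows; ruling this out is exactly where hyperbolicity of the postsingular set plays the decisive role, and where the compatibility between the two kinds of uniformity (near the landing points versus near infinity along the rays) needs to be arranged carefully.
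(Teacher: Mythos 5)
Your overall strategy — reduce to Theorem D by combining the accessibility theorem from \cite{BL} with the observation that boundedness of $\PP(\fc)$ forces non-recurrence — is exactly the paper's route. But there is a genuine gap at the step you yourself flag as the main obstacle: the uniform length estimate $\length\bigl(g_{\sigma^n\s}(0,t)\bigr)\to 0$ uniformly in $n$ as $t\to 0$. You propose to derive it from Corollary A (hyperbolicity of $\PP(\fc)$) together with boundedness of the address and the asymptotics of rays near infinity, but what you write is a plan, not an argument: "should transfer a single modulus of continuity at $t=0$ to all shifted rays simultaneously" is precisely the assertion that needs proof, and the intermediate claims are themselves nontrivial (for instance, the entries of $\s$ are determined by the behavior of the rays near infinity, not by which bounded set contains the landing points, so your one-line justification of bounded entries does not stand as written; controlling the pulled-back arcs uniformly requires keeping the pullbacks inside a region where the expansion of Corollary A applies, which is the delicate point). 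The paper does not re-prove this estimate at all: it is part of the statement of Theorem~\ref{Accessibility of c} as quoted from \cite[Corollary 4.6]{BL} ("the length of the arcs $g_{\sigma^n\s}(0,t)\to 0$ uniformly in $n$ as $t\to 0$", with $\s$ bounded), so once non-recurrence is checked, Corollary D follows immediately from Theorem D. As it stands, your proposal leaves the only hard hypothesis of Theorem D unverified.

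A smaller but real error: non-escaping does \emph{not} follow from $c\in J(\fc)$. For exponential maps the escaping set is contained in the Julia set (and for escaping parameters $J(\fc)=\C$), so membership in the Julia set is no obstruction to escape. The correct reason is the other hypothesis: $\PP(\fc)$ bounded means the forward orbit of $c$ stays in a bounded set, hence $\fc^n(c)\nrightarrow\infty$. Your verification of non-recurrence via the left half-plane and forward invariance is fine and matches the paper's remark in the introduction.
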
}

The proof of Theorem D is independent from the rest of the paper, and relies on one side on known rigidity results for polynomials, and on the other side on the combinatorial similarity between exponentials and unicritical polynomials.

\subsection*{Acknowledgements}
 I am gratefully indebted to my advisor Misha Lyubich for suggesting this problem and for fundamental  discussions about this topic. I am thankful to Arnaud Ch\'eritat, Genadi Levin, Peter Makienko, Carsten Petersen, Lasse Rempe, Dierk Schleicher and Anna Zdunik for related discussions; to the IMATE in Cuernavaca and to the IMS in Stony Brook for their warm hospitality;  and to the first referee for suggesting clarifications and improvements in the structure of the paper as well as posing thoughtful questions challenging further investigations. 

\subsection*{Notation and terminology}
The complex plane is $\C$, the open unit disk is $\D$ and the Riemann sphere is $\hat{\C}$.
 We denote by $D_r(z)$ a disk of radius $r$ centered at the point $z$. 

The Euclidean diameter of a set $U'$ is denoted by $\diam U$, while the Euclidean length of a curve $\gamma$ is denoted by $\ell(\gamma)$. If $U$ admits a normalized  hyperbolic metric, the hyperbolic diameter of a set $U'\subset U$ is denoted by $\diam_U(U')$, while the hyperbolic length of a curve  $\gamma\subset U$ is denoted by $\ell_U(\gamma)$.
 
The Julia set of an exponential map $f$ is denoted by $J(f)$, and its Fatou set by $F(f)$. A parameter $c$ is called \emph{non-escaping} if $ \fc^n(c)\nrightarrow\infty $.  It is called \emph{hyperbolic} if it has an attracting periodic orbit, \emph{parabolic} if it has an indifferent periodic orbit with rational multiplier, and \emph{Misiurewicz} if the orbit of $c$ is finite. It is called \emph{Siegel} (respectively \emph{Cremer}) if  it has an indifferent periodic orbit $\{z_i\}^q_{i=1}$ with irrational multiplier in a neighborhood of which $f^q_c$ is linearizable (respectively non-linearizable). It is called \emph{non-recurrent} if the singular value is non-recurrent. 
A maximal open set of parameters which are hyperbolic is called a \emph{hyperbolic component}. Parabolic,  Siegel and Cremer parameters are on the boundaries of hyperbolic 
components.


\section{Hyperbolicity of the postsingular set}\label{Hyperbolicity of the postsingular set}

This section is dedicated to prove Theorem A, which will be used in the proof of Theorem B. The proof of Theorem A itself will not be used later.

Let $K$ be a closed forward invariant set not intersecting  a small disk $D_{e^{-M}}(c)$. By forward invariance $K$ does not intersect the left half plane $\H_M:=\{z\in\C; \Re z\leq M\}$ either. Observe that points in $K$ can still have arbitrarily large imaginary and real part.

For a topological disk  $V$,  we call $\CC(n,V)$ is  the set of  connected components of $f^{-n}(V)$, and $\CC_K(n,V)$ be the set of  connected components of $f^{-n}(V)$ which intersect $K$. By forward invariance of $K$, if $U\in \CC_K(n,V)$, $f^j(U)\in \CC_K(n-j,V)$ for any $j\leq n$. Similarly, if $V'\subset V$, and $U'\in \CC_K(n, V')$, then $U'\subset U$ for some $U\in\CC_K(n,V)$. Observe also that if $c$ is non-recurrent, the postsingular set $\PP(f)$ satisfies the hypothesis on $K$.

We recall that a family of univalent functions $\{\phi_k\}$, $\phi_k: V\ra \C$, with $V$ simply connected, is \emph{normal} if every sequence either has a convergent subsequence, or escapes any compact set (see \cite{Mi}). By Montel's Theorem, any family omitting three values is normal. So, for any    simply connected neighborhood $V$ of a point $z\in\C$ which omits a periodic orbit of period $>3$, the family of univalent inverse branches of $f$ defined on $V$ is normal.

The main tool to prove Theorem A is the following proposition (compare with Theorem 1.1 in \cite{ST}, \cite[Theorem 2.7]{RvS}, and \cite[Proposition 3]{Ly1}):

\begin{prop}[Local expansivity]\label{Local expansivity}
Let $f_c$ be a non-recurrent exponential map. Let $K\subset J(f_c)$ be a forward invariant closed set not intersecting $D_{e^{-M}}(c)$ for some $M>0$ and not containing parabolic points. Then for any $\epsilon>0$ and any $z_0\in K$ there exists  $\delta>0$ such that for any  $U\in \CC_K(n,D_{\delta}(z_0))$ the following two statements hold:
\begin{itemize}
\item[a.] $\diam U<\epsilon$ and $f^n: U\rightarrow D_{\delta}(z_0)$ is univalent.
\item[b.]For all $\epsilon'>0$ there exists $n_{\epsilon'}$ such that if $n>n_{\epsilon'}$ and $U\in \CC_K(n,D_{\delta'(z)})$ with $\delta'<\delta$, then $\diam U<{\epsilon'}$. 
\end{itemize}
\end{prop}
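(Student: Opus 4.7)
The plan is to exploit hyperbolic contraction on $\Omega := \C \setminus \PP(\fc)$, combined with the non-recurrence of $c$ to control univalence of pullbacks. Using non-recurrence of $c$ and closedness of $\PP(\fc)$, I will enlarge $M$ so that $D_{e^{-M}}(c)$ is disjoint from both $K$ and $\PP(\fc)$; in particular $c \in \Omega$. Because $\PP(\fc)$ is forward invariant and $c$ has no preimages under $f$, the inclusion $f^{-1}(\Omega) \subset \Omega \setminus \{c\} \subsetneq \Omega$ is strict, and $f\colon f^{-1}(\Omega) \to \Omega \setminus \{c\}$ is a holomorphic covering (restriction of the universal cover $f\colon \C \to \C\setminus\{c\}$). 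By Schwarz--Pick combined with this strict inclusion, every univalent inverse branch of $f^n$ on a simply connected subset of $\Omega$ strictly contracts the hyperbolic metric $\rho_\Omega$.

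To prove part (a), fix $z_0 \in K$. If $z_0 \notin \PP(\fc)$, I will choose $\delta$ so small that $D_\delta(z_0) \subset \Omega$; the condition $D_\delta(z_0) \cap \PP(\fc) = \emptyset$ then guarantees $f^k(c) \notin D_\delta(z_0)$ for every $k \geq 1$, so no pullback component of $D_\delta(z_0)$ contains $c$, and all inverse branches of $f^n$ on $D_\delta(z_0)$ are univalent. The diameter bound $\diam U < \epsilon$ then follows from the hyperbolic contraction, converted to Euclidean estimates via the Koebe-type comparison of $\rho_\Omega$ with the reciprocal Euclidean distance to $\partial \Omega$. The subtle case is $z_0 \in \PP(\fc)$, where $D_\delta(z_0) \not\subset \Omega$ for any $\delta > 0$; here I will proceed by a joint induction on the depth of the pullback, using that each intermediate image $W_j := f^{n-j}(U)$ of $U \in \CC_K(n, D_\delta(z_0))$ meets $K$ (by forward invariance) and so contains a point at Euclidean distance $\geq e^{-M}$ from $c$. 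The key observation is that $c \notin W_j$ whenever $\diam W_j < e^{-M}$, which allows a bootstrap: assuming univalence along the first $j-1$ pullbacks furnishes a hyperbolic (and hence Euclidean) diameter bound on $W_j$ via contraction on a suitably thickened hyperbolic domain obtained by removing only a truncated tail of $\PP(\fc)$, and this diameter bound in turn forces $c \notin W_j$, preserving univalence for the next step.

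For part (b), once (a) is in place the family of univalent inverse branches $g_U\colon D_\delta(z_0) \to U$, for $U \in \CC_K(n, D_{\delta'}(z_0))$ with $\delta' \leq \delta$ and $n \geq 0$, has image in $\Omega \setminus \{c\}$, which omits three points, so it is normal on $D_\delta(z_0)$ by Montel. If uniform shrinking failed, extracting a subsequence would produce a non-constant univalent limit $g_\infty$ whose image meets $K$; a standard Fatou-type argument applied to the induced limiting dynamics would then produce a non-repelling periodic orbit in $K \subset J(\fc)$, contradicting either the absence of parabolic points in $K$ (indifferent case) or the non-existence of attracting periodic orbits in the Julia set (attracting case). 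Hence all subsequential limits are constants and Euclidean diameters of pullbacks shrink uniformly as $n \to \infty$.

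The main obstacle I foresee is the $z_0 \in \PP(\fc)$ case of part (a): constructing and verifying the thickened hyperbolic domain used in the bootstrap, and turning the strict but a priori non-uniform hyperbolic contraction into a uniform Euclidean estimate along $K$-intersecting pullbacks. This is the technical heart of the argument and, in the transcendental setting, follows the template of the analogous polynomial results in \cite{ST, RvS, Ly1}.
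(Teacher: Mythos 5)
There is a genuine gap, in two places. First, the case you yourself flag as the heart of the matter --- $z_0\in\PP(\fc)$ in part (a) --- is not a corner case but the main case (the intended application is $K=\PP(\fc)$), and your ``joint induction/bootstrap on a thickened domain obtained by removing a truncated tail of $\PP(\fc)$'' is only announced, not carried out; nothing in the proposal explains why the hyperbolic diameter bound you would get at step $j-1$ converts into a Euclidean bound smaller than $e^{-M}$ at step $j$, which is exactly what the bootstrap needs to keep $c$ out of $W_j$. The paper does not attempt such a bootstrap: univalence of $f^n$ on $D_{\delta_1}(z_0)$ is imported wholesale from \cite{RvS} (Theorems 2.5 and 2.7, using that non-recurrence implies strong non-recurrence for exponentials and that pullbacks are taken along $K$; this is also where the no-parabolic-points hypothesis enters), and the shrinking of diameters is then obtained from Lemma~\ref{Shishitan} once a uniform Euclidean bound $R$ on all $K$-pullbacks of $D_{\delta_1}(z_0)$ is established.

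Second, and independently, your passage from hyperbolic to Euclidean estimates on $\Omega=\C\setminus\PP(\fc)$, and your normality argument in part (b), ignore pullbacks that drift to infinity. Points of $K$ can have arbitrarily large real and imaginary parts, and where the domains sit far out the density of $\rho_\Omega$ is very small (e.g. when $\PP(\fc)$ is bounded it decays like $1/(|z|\log|z|)$), so a bounded or even small hyperbolic diameter gives no Euclidean control there; likewise, in (b) a subsequence of inverse branches with non-shrinking images may simply escape every compact set, in which case no limit function $g_\infty$ exists and your Fatou-type contradiction never gets off the ground. This escaping-to-infinity phenomenon is the transcendental-specific difficulty, and it is precisely what the paper's Lemma~\ref{Mongolfiera} is built to handle: a pullback along $K$ with large Euclidean diameter is pushed forward (using $|f'|\geq 2$ on $\{\Re z\geq\log 2\}$ and $2\pi i$-periodicity) to an associated pullback of definite size meeting a fixed annulus around $c$, which restores a finite accumulation point; this is used both to produce the uniform bound $R$ in part (a) and to repair exactly the step of part (b) where you extract a limit. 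Without an ingredient playing this role, both your part (a) in the unbounded regime and your part (b) are incomplete. (Your endgame for (b), producing a non-repelling cycle, is a workable alternative to the paper's ``Fatou set meets $J(f)$'' contradiction, but only after the accumulation issue is fixed; note also that $\Omega$ need not be hyperbolic in the degenerate case where $\PP(\fc)$ is a single fixed point.)
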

To prove Proposition~\ref{Local expansivity} we need  
 two lemmas. The first one is  \cite[Lemma 2.1]{ST}.

\begin{lem}
\label{Shishitan}
 For any $0<\delta<1$, there exists a constant $C(\delta)$ such that for any univalent map $g: U\ra \D$ with $U$ simply connected, and for any connected component $U'$ of $g^{-1}(D_\delta(0))$, $\diam_U(U')\leq C(\delta)$. Moreover $\lim_{\delta\ra0}C(\delta)=0$. 
\end{lem}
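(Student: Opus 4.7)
The strategy is to reduce to $U = \D$ by Riemann uniformization, exploit the conformal isomorphism onto the image provided by univalence, and then bound the hyperbolic diameter of a component of $\Om \cap D_\delta(0)$ in $\Om$ by combining the classical density estimate for the hyperbolic metric with Koebe distortion.

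Since $U$ is simply connected, a Riemann map $\phi : \D \to U$ is an isometry for the hyperbolic metrics. Setting $h := g \circ \phi : \D \to \D$ (still univalent), the set $\phi^{-1}(U')$ is a component of $h^{-1}(D_\delta(0))$ with the same hyperbolic diameter in $\D$ as $U'$ has in $U$. Now $h$ is univalent but in general \emph{not} surjective, so it is a conformal isomorphism from $\D$ onto its image $\Om := h(\D) \subsetneq \D$; hence $\diam_\D(\phi^{-1}(U')) = \diam_\Om(W)$, where $W := h(\phi^{-1}(U'))$ is the corresponding connected component of $\Om \cap D_\delta(0)$. The question thus reduces to bounding $\diam_\Om(W)$ by a function of $\delta$ alone that tends to $0$.

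To carry this out, for $w_1, w_2 \in W$ I would connect them by the hyperbolic geodesic of $\Om$ and estimate its length using the universal density bound $\rho_\Om(w) \le 2/\dist(w,\partial \Om)$, valid on any planar hyperbolic domain. The key geometric input is that $W$ is a \emph{full} component of $\Om \cap D_\delta(0)$, so its relative boundary inside $\Om$ lies on $\partial D_\delta(0)$; combined with Koebe distortion applied to the univalent map $h$, this yields a lower bound on $\dist(w,\partial \Om)$ for $w \in W$ in terms of $\delta$, and hence the required upper bound on $\diam_\Om(W)$. The conclusion $C(\delta) \to 0$ can then be extracted from the explicit estimate, or alternatively via a normal-families argument: a sequence of counterexamples $(h_n, W_n, \delta_n)$ with $\delta_n \to 0$ and $\diam_{\Om_n} W_n \ge c > 0$ would, after Koebe normalization, subconverge (by Hurwitz) to a univalent limit for which the limit components collapse to a point, a contradiction.

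The main obstacle is precisely the last step, that is, obtaining an \emph{upper} bound on $\diam_\Om(W)$. A naive application of Schwarz--Pick to the inclusion $\Om \subset \D$ gives $d_\Om \ge d_\D$ on $\Om$, which is the wrong direction: it bounds distances in $\Om$ from below by their $\D$-counterparts, not above. Hence the genuine content of the lemma is to upgrade the small $\D$-diameter of $D_\delta(0)$ into a small $\Om$-diameter of $W$, and this is exactly the feature that distinguishes the non-surjective case from the conformal one. What makes the upgrade possible is the combination of univalence of $h$ on all of $\D$ (via Koebe, which prevents $\Om$ from being too pinched near $W$) and the full-component hypothesis (which traps the relative boundary of $W$ inside $\Om$ on the circle $\partial D_\delta(0)$).
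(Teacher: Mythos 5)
Your reduction to a conformal isomorphism $h:\D\to\Omega:=h(\D)\subsetneq\D$ and a full component $W$ of $\Omega\cap D_\delta(0)$ is fine, but the step you yourself flag as the main obstacle is not just hard, it is impossible: once surjectivity is dropped, the statement you are trying to prove is false, so neither the Koebe argument nor the normal-families fallback can close the gap. Concretely, take $\eta\ll\delta$, let $U=\Omega=\{|\Im z|<\eta\}\cap\D$ and let $g$ be the inclusion $U\hookrightarrow\D$ (univalent, $U$ simply connected). The component $U'$ of $g^{-1}(D_\delta(0))$ containing $0$ contains the segment $[-\delta/2,\delta/2]$, and since $U$ lies inside the strip $\{|\Im z|<\eta\}$, whose hyperbolic density on the real axis is of order $1/\eta$, one gets $\diam_U(U')\gtrsim\delta/\eta\to\infty$ as $\eta\to0$ with $\delta$ fixed. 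This shows exactly where your key claim breaks: the relative boundary of $W$ inside $\Omega$ does lie on $\partial D_\delta(0)$, but $\partial\Omega$ itself may pass arbitrarily close to interior points of $W$, so Koebe distortion cannot give a lower bound on $\dist(w,\partial\Omega)$ in terms of $\delta$ alone, and no constant $C(\delta)$ exists in this generality.

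For comparison: the paper does not prove this lemma at all; it quotes Lemma 2.1 of Shishikura--Tan Lei, which is stated for \emph{proper} holomorphic maps of degree at most $d$, and the version used here is the $d=1$ case. So \lq\lq univalent\rq\rq\ must be read as \lq\lq univalent and proper\rq\rq, i.e.\ a conformal isomorphism $g:U\to\D$; this is what actually occurs in the application, since there $U$ is a component of $f^{-n}(D_{\delta_1}(z_0))$ on which the inverse branch is defined over the whole disk, so $g$ is onto. Under that reading the proof is one line: $g$ is an isometry for the hyperbolic metrics, $g^{-1}(D_\delta(0))$ is connected, and $\diam_U(U')=\diam_\D(D_\delta(0))=2\log\frac{1+\delta}{1-\delta}\to0$ as $\delta\to0$. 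The genuine content of the cited lemma is the degree-$d$ proper case; your reformulation over an arbitrary subdomain $\Omega\subsetneq\D$ discards precisely the properness hypothesis that makes the statement true.
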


The next lemma is a bit technical but it is needed to deal with   sequences of pullbacks which go to infinity, in the sense that they do not have any finite accumulation point.

\begin{lem}
\label{Mongolfiera} Let $f(z)=e^z+c$, $c\in J(f)$ non-recurrent and $K$ be as in Proposition~\ref{Local expansivity}. 
Let $z_0\in\C$, $V=D_\delta(z_0)$ with $\delta$ small. Suppose that  there exists a sequence of univalent pullbacks $U_k\in \CC_K(n_k, V)$, {$k\ra\infty$,} with $\diam U_k\geq\epsilon_k> \delta/2^{n_k}$ for $k$ sufficiently large. Then there exists a sequence of integers $j_k<n_k$ such that $U_k'\in\CC_K(n_k-j_k-1, V)$ has a finite accumulation point and $\diam U_k'\geq\epsilon'_k> e^{-M}(1-e^{-\epsilon_k})$. In particular, if the ${\epsilon_k}$ are bounded away from $0$ so are the $\epsilon_k'$, and if $\epsilon_k\ra\infty$, $\epsilon'_k\ra e^{-M}$.
 Moreover, if $\inf\epsilon_k>0$, and $n_k\ra\infty$, then  $n_k-j_k\ra\infty$.
\end{lem}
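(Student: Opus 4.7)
The plan is to select $j_k$ by tracking the forward orbit $U_k^{(j)} := f^j(U_k)$, $j = 0, 1, \ldots, n_k$. Since $U_k^{(n_k)} = V$ is a fixed bounded set, a diagonal argument lets me pass to a subsequence in $k$ and define $j_k$ as the smallest integer such that the sequence $\{U_k^{(j_k+1)}\}_k$ has a finite accumulation point in $\C$; if $\{U_k\}_k$ itself already accumulates finitely I set $j_k = -1$ and $U_k' := U_k$, in which case the diameter bound reduces to $\epsilon_k \geq e^{-M}(1-e^{-\epsilon_k})$, which follows from $1 - e^{-t} \leq t$ provided $M \geq 0$ (and we may always enlarge $M$ in the non-recurrence hypothesis). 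By forward invariance of $K$, $U_k' := U_k^{(j_k+1)} \in \CC_K(n_k - j_k - 1, V)$, with finite accumulation by construction.

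In the main case $j_k \geq 0$, the minimality of $j_k$ forces $\{U_k^{(j_k)}\}_k$ to escape to infinity in the Hausdorff sense. Picking $y_k \in U_k^{(j_k)} \cap K$ (nonempty by forward invariance), the image $f(y_k) \in U_k^{(j_k+1)} \cap K$ must lie in a bounded region, so $|f(y_k) - c| = e^{\Re y_k}$ stays bounded above; combined with $\Re y_k \geq -M$ from the $K$-hypothesis, $\Re y_k$ is bounded, so the escape occurs purely in the imaginary direction: $|\Im y_k| \to \infty$.

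For the central diameter estimate, I would choose $x_k \in U_k \cap K$ and $z_k \in U_k$ with $|z_k - x_k| \geq \epsilon_k$, realizing the diameter. Setting $w_k := f^{j_k}(x_k)$ and $w_k' := f^{j_k}(z_k)$ in $U_k^{(j_k)}$, univalence of $f^{j_k}$ on $U_k$ gives
\[
\diam U_k' \geq |f(w_k) - f(w_k')| = e^{\Re w_k} \cdot |1 - e^{w_k' - w_k}|.
\]
Since $w_k = f^{j_k}(x_k) \in K$, we have $\Re w_k \geq -M$, so the first factor is at least $e^{-M}$. For the second factor the target is $|1 - e^{w_k' - w_k}| \geq 1 - e^{-\epsilon_k}$: this uses the elementary fact that the minimum of $|1 - e^u|$ over the circle $|u| = r$ (for $r$ less than some critical value $\leq 2\pi$) is $1 - e^{-r}$, attained at $u = -r$. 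The main obstacle is that the direction of $w_k' - w_k$ in $U_k^{(j_k)}$ is not a priori in the favorable direction. I expect to overcome this by exploiting the freedom in the choice of the pair $(x_k,z_k) \in U_k$ — pushing one point in the direction of decreasing real part relative to the other — combined with the fact that univalence of $f^{j_k}$ on $U_k$ (via Koebe distortion) preserves the qualitative displacement structure, so that an appropriate pair realizes the worst-direction lower bound. This geometric step is where the proof carries its weight.

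Finally, for the moreover statement: if $\inf_k \epsilon_k > 0$, then $U_k$ has uniformly positive diameter and, by the $K$-hypothesis plus the fact that $f^{n_k}(U_k) = V$ is bounded, its real range is uniformly controlled; the only mechanism for $\{U_k^{(j)}\}_k$ to fail to accumulate finitely is imaginary escape, which is rectified by a single application of $f$ (because $|f(z) - c| = e^{\Re z}$ depends only on real part). Hence $j_k$ remains bounded in $k$, giving $n_k - j_k \to \infty$ whenever $n_k \to \infty$.
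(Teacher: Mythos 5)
The engine of the paper's proof is missing from your proposal, and the gaps it leaves are real. In the paper, $j_k$ is defined dynamically, per $k$: it is the \emph{first} time $j$ at which $f^{j}(U_k)$ meets the half-plane $\{\Re z<\log 2\}$ (such a $j$ exists because $|f'|\ge 2$ where $\Re z\ge\log 2$ and $\diam U_k>\delta/2^{n_k}$). This one choice does three jobs at once: (i) by minimality all earlier iterates lie where $|f'|\ge 2$, so $\diam f^{j_k}(U_k)\ge 2^{j_k}\diam U_k\ge\epsilon_k$; (ii) since $f^{j_k}(U_k)$ meets the forward invariant set $K$, which avoids $\{\Re z\le -M\}$, it meets the strip $S=\{-M<\Re z<\log 2\}$, so $U_k'=f^{j_k+1}(U_k)$ meets the bounded annulus $f(S)$ and hence has finite accumulation points; (iii) the \lq\lq moreover\rq\rq\ part follows because if $j_k\to\infty$ with $\inf\diam U_k>0$ then $\diam f^{j_k}(U_k)\ge 2^{j_k}\diam U_k\to\infty$ while these sets, translated by suitable multiples of $2\pi i$, all meet the compact set $S\cap\{|\Im z|<2\pi\}$, contradicting normality of the inverse branches of $f^{n_k-j_k}$ on $V$. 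Your $j_k$, defined through accumulation of the whole sequence in $k$, is not even a quantity attached to each $k$ (it is one integer per subsequence), and it carries none of this information: in particular you have no lower bound whatsoever on $\diam f^{j_k}(U_k)$, so the central estimate never gets started; and your boundedness claim for $j_k$ is asserted rather than proved --- finite accumulation of the sequence of \emph{sets} $U_k^{(j_k+1)}$ does not make your chosen points $f(y_k)$ bounded, so the deduction that $\Re y_k$ is bounded above already fails.

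Even granting two points $w_k,w_k'\in f^{j_k}(U_k)$ with $|w_k-w_k'|\ge\epsilon_k$ and $w_k\in K$, the two-point strategy cannot produce $e^{-M}(1-e^{-\epsilon_k})$: univalence of $f$ on $f^{j_k}(U_k)$ only forbids $w_k'-w_k\in 2\pi i\Z$, so the displacement can be arbitrarily close to $2\pi i$ and $|1-e^{w_k'-w_k}|$ arbitrarily small; moreover your circle minimum $1-e^{-r}$ at $u=-r$ is only valid for $r$ below a threshold $\le 2\pi$, while $\epsilon_k$ may be large or tend to infinity. The paper's bound is genuinely a statement about \emph{connected} sets: among connected sets of diameter $L$ on which $f$ is univalent and which intersect $S$, the image diameter is smallest (in the sense of the stated inequality) for a horizontal segment of length $L$ going to the left from $S$, whose image has length at least $e^{-M}(1-e^{-L})$. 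Connectivity plus the location constraint coming from $K$ is exactly what excludes the near-$2\pi i$ displacements, and the appeal to \lq\lq freedom in the choice of the pair\rq\rq\ together with Koebe distortion does not substitute for it (Koebe is not even available here without a definite annulus of univalency around $U_k$). To repair the proof you should define $j_k$ as the paper does and then run the expansion, strip, and normality arguments; the accumulation-based definition cannot be patched locally.
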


\begin{proof} Let $S=\{x, -M<\Re x<\log2\}$ with $M$ as in Proposition~\ref{Local expansivity}. Observe that $|f'(x)|\geq2$ whenever $\Re x\geq \log 2$,  so, as $\diam U_k> \delta/2^{n_k}$, there exists a minimal $j_k\geq0$ such that  $f^{j_k}(U_k)\cap \{z\in\C;\Re z<\log2\}\neq\emptyset$; since  $K\cap U_k\neq \emptyset$ for all $k$ and $K$ is forward invariant,   $f^{j_k}(U_k)\cap S\neq\emptyset$.

 Let $U_k':=f^{j_k+1}(U_k)$. As $f^{j_k}(U_k)\cap S\neq\emptyset$, the sets  $U_k'$ have a finite accumulation point in the annulus $A=f(S)$, centered at $c$, with internal radius $e^{-M} $ and external radius $2$. It is only left to estimate $\diam U_k'$. 
As $j_k$ is minimal, $\diam f^{j_k}(U_k)\geq 2^{j_k}\diam U_k\geq \epsilon_k$. Between all connected sets of diameter $L$ which intersect $S$ and on which $f$ is univalent, $|f'|$ is smallest along a horizontal segment of length $L$ going to the left of $S$,  whose image is a segment of length $e^{-M}-e^{-L-M}$, so 
$\diam U_k'\geq e^{-M}(1-e^{-\epsilon_k})$.

To prove that under the given hypothesis $n_k-j_k\ra\infty$ it is enough to show that the sequence $\{j_k\}$ is bounded. For each $k$, by minimality of $j_k$ we have that $\diam f^{j_k}(U_k)\geq  2^{j_k}\diam U_k$; hence if $\inf\diam U_k>0$ and $j_k\ra\infty$, $\diam f^{j_k}(U_k)\ra\infty$ (and by definition of $j_k$,  $f^{j_k}(U_k)\cap S\neq\emptyset$). On the other side, up to considering a translate of  $f^{j_k}(U_k)$ by $2\pi i q$ for some interger $q$ (which is still a $n_k-j_k$th preimage of $V$ since $f$ is $2\pi i $ periodic) we obtain that the sets in question all intersect the compact set $S\cap\{z\in\C;|\Im z|<2\pi\}$ and have diameter tending to infinity, contradicting normality of inverse branches on $V$.
\end{proof}
\begin{proof}[Proof of Proposition~\ref{Local expansivity}]
\begin{itemize}
\item[a.]
 By Theorem 2.7 in \cite{RvS}\footnote{For the exponential family, non-recurrence implies strong non-recurrence as defined in \cite{RvS}. The proof  in \cite{RvS} works in the same way by substituting pullbacks  along the postsingular set with pullbacks along $K$. Unfortunately the rest of Proposition~\ref{Local expansivity} can  not be as explicitly deduced from results in \cite{RvS}.} every non-parabolic point is regular, so by Theorem 2.5 in \cite{RvS} there exists $\delta_1$ such that for any $U$ as in the claim $f^n:U \ra D_{\delta_1}$ is univalent.
Let us first assume that there exists $R>0$ such that $\diam U<R$ for any $U\in\CC_K(n, D_{\delta_1}(z_0))$. Then for any such $U$, $U\subset D_R(z)$ for some $z\in U$. On $D_R(z)$, the euclidean density is bounded by $R$ times the hyperbolic density on $D_R(z)$. 
Let $\delta$ be  sufficiently small so that  $C(\delta/\delta_1)<\epsilon/R$ as given by  Lemma~\ref{Shishitan}. 
Then using {the comparison principle for } the hyperbolic metric, we get that for any $U'\subset U\subset D_R(z)$ with $U'\in\CC_K(n, D_\delta(z_0))$,
\[ \diam U'\leq R\diam_{D_R(z)} (U')\leq R \diam_{U} U'\leq \epsilon.\]

 We now show that there exists $R>0$ such that $\diam U<R$ for any $U\in\CC_K(n, D_{\delta_1}(z_0))$. Let $A$ be the annulus from the proof of   Lemma~\ref{Mongolfiera}. By normality of inverse branches defined on $D_\delta(z_0)$, there cannot be a sequence $U_k$ intersecting $A$ with $\diam U_k\ra\infty$, so there is some $R$ such that $\diam U<R$ for all $U$ with $U\cap A\neq\emptyset$ and  the claim holds for all $U$ with $U\cap A\neq\emptyset$. It is only left to show that there is some $R'$ such that $\diam U<R'$ for all $U\in \CC_K(n_k, D_{\delta}(z))$, not necessarily intersecting $A$. Up to making  $\delta_1$ smaller we can assume that  $\diam U<e^{-M}/2$  for all $U\in \CC_K(n, D_{\delta_1}(z_0))$  intersecting $A$. Suppose by contradiction that there exists a sequence  $U_k\in \CC_K(n_k, D_{\delta_1}(z) )$ with $\diam U_k\ra\infty$. By Lemma~\ref{Mongolfiera}, this gives  a sequence of $U_{k}'$ intersecting $A$ with diameter tending to $e^{-M}$, contradicting the choice of $\delta_1$. The claim then holds up to making $\delta$ smaller so that $C(\delta/\delta_1)<\min(\epsilon/R',\epsilon/R)$.
\item[b.]
  If not, there exist $\epsilon>0$ and a sequence $U_k\in\CC_K(n_k, D_{\delta}(z))$,  $n_k\ra\infty$, with $\diam U_k\geq \epsilon$. As $n_k\ra\infty$, eventually $\epsilon/{2^{n_k}}<\delta$.
 Thanks to Lemma~\ref{Mongolfiera}, up to replacing $U_k$ with $U_k'$, we can assume that the $U_k$ have a finite accumulation point $y$. 
Let $\{\phi_k\}$ be   the sequence of inverse branches defined on $D_{\delta_1}\supset \ov{D_{\delta}}$ such that $\phi_k: D_{\delta_1}\ra U_k$. 
By   normality, up to passing to a subsequence the $\phi_k$ converge uniformly on compact subsets of $D_{\delta_1}$ to a limit function $\phi$, which is non-constant because $\diam U_k\geq \epsilon$ and $\ov{D_{\delta}}\subset D_{\delta_1}$. It follows that there is a neighborhood of $\phi(z)$ which is mapped inside $D_\delta(z)$ under infinitely many iterates of $f$, contradicting $\phi(z) \cap J(f)\neq\emptyset$.
\end{itemize}  

\end{proof}

\begin{proof}[Proof of Theorem A] 
If $K$ is not hyperbolic, there are  $n_k\rightarrow \infty$, and $z_k\in K$, such that $|(f^{n_k})'(z_k)|\leq1$.
As $K$ is closed and forward invariant, any $\{f^{n_k}(z_k)\}$, as well as any finite accumulation point thereof,  belongs to $K$.
Assume first that   $\fnkzk$ has a finite accumulation point $y\in K$.
Fix $\epsilon>0$ and let $V=D_{\delta}(x)$ as given by Proposition~\ref{Local expansivity}.
Up to considering a subsequence, we can assume that, for large $k$, $\fnkzk\in V$.
    For each such $k$, let $U_k$ be the component of $f^{-n_k}(V)$ containing $z_k$. By  Proposition~\ref{Local expansivity} part a., $\diam (f^j(U_k))\leq\epsilon$ for $j=0\dots n_k$, and  $f^{n_k}:U_k\rightarrow V$ is univalent so it has a  local inverse $\phi_k:V\rightarrow U_k$.     
  The family $\{\phi_k\}$ is normal because it is a family of univalent inverse functions defined in $V$ which omits at least three points; any limit function $\phi$ for $\phi_k$ is constant by  Proposition~\ref{Local expansivity} part b. This contradicts the initial assumption that 
\[|\phi'(y)|=\lim_{k\rightarrow\infty}|\phi_k'(f^{n_k}(z_k))|=\lim_{k\rightarrow\infty}\left|\frac{1}{(f^{n_k})'(z_k)}\right|\geq 1.\]
 
Now suppose  that  $\fnkzk\ra\infty$, let $A, S$ be the annulus  and the strip as in Lemma~\ref{Mongolfiera}. To reduce to the previous case it is enough to find a sequence  $j_k<n_k$ such that $j_k\ra\infty$, $f^{\jk+1}(\zk)\in A$, and $|({f^{\jk+1}})'(z_k)|\leq 1$. 
 
If $f^{\nk}(\zk)\in A$, let $j_k=n_k$. Otherwise, for any $k$ let $j_k\leq n_k-1$ be the greatest integer such that $f^{\jk}(z_k)\in A$. Observe that the condition $|(f^{n_k})'(z_k)|\leq1$, plus the fact that $f^{\nk}(\zk)\in K$, implies that such a $j_k$ exists, otherwise we would have $|(f^{n_k})'(z_k)|\geq 2^{n_k}>1$.
 
 To prove that $|({f^{\jk+1}})'(z_k)|\leq 1$, observe that 
 
\begin{align*}
1&\geq |\fnkzk'|=
\prod_{i=0}^\jk e^{\Re f^i(\zk)}\prod_{i=\jk+1}^{n_k-1} e^{\Re f^i(\zk)}\geq \\
&\geq \prod_{i=0}^\jk e^{\Re f^i(\zk)} 2^{\nk-\jk-1}
  \Rightarrow \prod_{i=0}^\jk e^{\Re f^i(\zk)}= |(f^{\jk+1})'(\zk)|\leq 1/2^{n_k-j_k-1}\leq 1.
\end{align*} 
Similarly, because $\Re z>-M$ for all $z\in K$,

\[
1\geq |\fnkzk'|\geq
\left(\prod_{i=0}^\jk e^{-M}\right) 2^{\nk-\jk-1}=e^{-M(j_k+1)}2^{n_k-j_k-1}\]

hence, as $\nk\ra\infty$,  $\jk\ra\infty$ as well. 

\end{proof}

{
\begin{cor}\label{zero area}
Let $K$ be as in Theorem A. Then $K$ has Euclidean  measure zero. 
\end{cor}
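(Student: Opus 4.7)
My plan is to argue by contradiction. Suppose $|K|>0$ and pick a Lebesgue density point $z_0 \in K$. Applying Proposition~\ref{Local expansivity} at $z_0$ yields a radius $\delta>0$ so that every component $U \in \CC_K(n, D_\delta(z_0))$ comes with a univalent inverse branch $\phi_n\colon D_\delta(z_0) \to U$ and has diameter shrinking to $0$ as $n\to\infty$. This is the Koebe-distortion machinery needed to transport density estimates between $z_0$ and pullback regions.

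The key intermediate step is to show that every finite accumulation point $y$ of the forward orbit $\{f^n(z_0)\}$ is itself a density point of $K$. For such a $y$, one applies Proposition~\ref{Local expansivity} at $y$ to get $\delta_y>0$. Along the subsequence $n_k$ with $f^{n_k}(z_0)\to y$, the component $U_{n_k}$ of $f^{-n_k}(D_{\delta_y}(y))$ containing $z_0$ belongs to $\CC_K(n_k, D_{\delta_y}(y))$, and carries a univalent inverse branch $\tilde\phi_{n_k}$ sending $f^{n_k}(z_0)$ back to $z_0$. For each $\rho<\delta_y/2$ and $k$ large enough that $f^{n_k}(z_0)\in D_\rho(y)$, the set $\tilde\phi_{n_k}(D_\rho(y))$ is contained in $U_{n_k}$, contains $z_0$, and has diameter tending to $0$, so its $K$-density converges to $1$. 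Forward invariance of $K$ yields the inclusion $K\cap \tilde\phi_{n_k}(D_\rho(y)) \subseteq \tilde\phi_{n_k}(K\cap D_\rho(y))$, so Koebe distortion on $D_\rho(y)\subset D_{\delta_y}(y)$ (with constant $C(\rho/\delta_y)\to 1$ as $\rho\to 0$) produces the inequality $|K\cap D_\rho(y)|/|D_\rho(y)|\geq 1/C(\rho/\delta_y)$. Letting $\rho\to 0$ shows $y$ is a density point of $K$.

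The final step is to iterate this density-propagation along orbit closures inside $K$, using Theorem~A's uniform expansion to upgrade ``density $1$ at every orbit-accumulation point'' to ``$K$ contains a nonempty open set $B$''. Once $K$ has nonempty interior, forward invariance gives $f^m(B)\subset K$ for all $m$; since backward orbits in $J(f_c)$ are dense in $J(f_c)$, for any $z\in J(f_c)$ some preimage of $z$ eventually lands in $B$, hence $z\in \bigcup_m f^m(B)\subset K$. In particular $c\in J(f_c)$ forces $c\in K$, contradicting $K\cap D_{e^{-M}}(c)=\emptyset$. The main obstacle I anticipate is the upgrade from pointwise density $1$ to actual containment of an open set: this likely requires a finer iteration of the Koebe and forward-invariance comparison inside nested pullback disks, making essential use of the uniform expansion on $K$ supplied by Theorem~A to rule out fat-Cantor-like invariant configurations that would be compatible with density $1$ everywhere yet have empty interior.
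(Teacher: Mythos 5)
Your proposal has a genuine gap at its pivotal step. Steps 1--2 (propagating density~$1$ from a density point $z_0$ forward to a finite accumulation point $y$ of its orbit, via Proposition~\ref{Local expansivity} and Koebe distortion) are essentially sound, but the conclusion you need is then extracted from the unproved claim that ``density $1$ at every orbit-accumulation point'' can be upgraded to ``$K$ contains a nonempty open set.'' This implication is false as a matter of measure theory (a fat Cantor set has density $1$ at almost every one of its points and empty interior), and the sentence promising to ``rule out fat-Cantor-like invariant configurations'' by iterating the same estimates is exactly the missing argument, not a sketch of one. The standard way to close a density argument for expanding invariant sets runs in the opposite direction: one exhibits, at a \emph{definite} scale around the orbit points $f^{n_k}(z_0)$ (or around $y$), a hole --- a disk of definite relative size disjoint from $K$ --- and pulls it back univalently with bounded distortion to arbitrarily small, bounded-shape neighborhoods of $z_0$, contradicting that $z_0$ is a density point. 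But obtaining such uniform holes here is itself nontrivial: in this setting $J(f_c)$ is typically all of $\C$, so nowhere-density of $J$ is unavailable, $K$ may be unbounded so compactness gives no uniform porosity, and your Step 2 silently assumes the orbit of $z_0$ has a finite accumulation point at all (orbits escaping to infinity, which the paper handles via Lemma~\ref{Mongolfiera} in the proof of Theorem A, are not addressed). A smaller but real slip: your final contradiction invokes a preimage of $c$ landing in $B$, yet $c$ is an omitted value of $e^z+c$ and has no preimages; this is patchable (the blow-up property gives $\bigcup_m f^m(B)\supset\C\setminus\{c\}$, so the closed set $K$ would be all of $\C$, contradicting $K\cap D_{e^{-M}}(c)=\emptyset$), but as written it fails.

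For comparison, the paper's proof avoids all of this: it puts the hyperbolic metric on $\Omega=\C\setminus D_{e^{-M/2}}(c)$, notes that $K$ is expanded by $f$ in this metric, so by forward invariance its hyperbolic area is $0$ or $\infty$; since $K\subset\C\setminus D_{e^{-M}}(c)$ has finite hyperbolic area in $\Omega$, the area is $0$, hence the Euclidean area is $0$. If you want to salvage a Euclidean density-point proof, you would need to supply the uniform porosity of $K$ at bounded scales along orbits (or otherwise handle unboundedness and escaping orbits), which is precisely the work the hyperbolic-area argument sidesteps.
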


\begin{proof} Let $\Omega:=\C\setminus  D_{e^{-M/2}}(c) $ and consider the hyperbolic metric in $\Omega$. The set  $K$ is hyperbolic with respect to the hyperbolic metric in $\Omega$, hence since it is forward invariant its hyperbolic area is either $0$ or $\infty$. Since the hyperbolic area of $\C\setminus  D_{e^{-M}}(c) $ in $\Omega$  is finite, the hyperbolic area of  $K$ is $0$, hence its Euclidean area is also $0$.
\end{proof}
}
 
\section{Quasiconformal maps and holomorphic motions}\label{Qc maps}

Here we recall the definition and the main properties of quasiconformal maps and of holomorphic motions. These results and their proof can be found for example in  \cite{Ah}; they will be used in Sections~\ref{Puzzles},\ref{qc rigidity}, and \ref{Rigidity}.

Let  $\psi:\Chat\ra\Chat$ be a homeomorphism with partial derivatives $\partial_z\psi,\partial_{\ov{z}}\psi$ in the sense of distribution and which are locally $L_1$. We associate to $\psi$ its \emph{Beltrami coefficient} $\mu_\psi:=\frac{\partial_{\ov{z}}\psi}{\partial_{{z}}\psi}$. By definition $\mu_\psi(z)$ is a measurable function.
The map $\psi$ is called \emph{quasiconformal} if $\|\mu_\psi\|_\infty<1$.
The quantity $K:=\sup_{z}\frac{\mu_\psi(z)+1}{1-\mu_\psi(z)}$ is called the \emph{dilatation} of $\psi$; if  $\psi$ has dilatation $K$, it is called $K$-quasiconformal.
Inverses of $K$-quasiconformal maps are $K$-quasiconformal; moreover,  if $g$ is conformal and $\psi$ is $K$-quasiconformal, $g\circ \psi$ as well as  $\psi\circ g$ are $K$-quasiconformal. 

\begin{lem}[Weyl's Lemma] If $f$ is a quasiconformal map with $\mu_f=0$ outside a set of Lebesgue measure zero, then $f$ is conformal.
\end{lem}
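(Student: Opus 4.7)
The plan is to reduce the statement to the classical distributional Weyl lemma for the Cauchy--Riemann operator $\partial_{\bar z}$. By the definition of quasiconformality recalled just above, the homeomorphism $f$ has distributional partials $\partial_z f$ and $\partial_{\bar z} f$ that are locally integrable; by a standard regularity argument (see \cite{Ah}) they are in fact locally in $L^2$. In particular, the Beltrami equation
\[
\partial_{\bar z} f \;=\; \mu_f \cdot \partial_z f
\]
is meaningful and holds as an identity of distributions on $\C$.

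Under the hypothesis $\mu_f = 0$ almost everywhere, the right hand side vanishes as an $L^1_{\mathrm{loc}}$ function, so I obtain $\partial_{\bar z} f = 0$ in the distributional sense. I would then invoke the classical Weyl lemma for the Cauchy--Riemann operator: any distribution $T$ on an open set $U \subset \C$ with $\partial_{\bar z} T = 0$ coincides (as a distribution) with a holomorphic function on $U$. The usual argument proceeds by convolving $T$ with a family of smooth, radially symmetric mollifiers $\rho_\epsilon$: the functions $T \ast \rho_\epsilon$ are smooth and pointwise satisfy $\partial_{\bar z}(T \ast \rho_\epsilon) = 0$, hence are holomorphic; the mean value property for holomorphic functions, together with the radial symmetry of $\rho_\epsilon$, forces these smooth functions to be independent of $\epsilon$ on the region where they are simultaneously defined, producing a holomorphic representative of $T$.

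Applying this to $f$ shows that $f$ agrees almost everywhere with a holomorphic function on $\Chat$ (working in suitable charts at $\infty$). Since $f$ is continuous (being a homeomorphism), the equality is pointwise, so $f$ itself is holomorphic on $\Chat$. An injective holomorphic self-map of $\Chat$ is automatically a M\"obius transformation, and in particular conformal. The single nontrivial ingredient in this plan is the distributional Weyl lemma for $\partial_{\bar z}$, and that is the step I expect to be the main obstacle in a fully self-contained write-up; everything else is formal once the appropriate $L^2_{\mathrm{loc}}$ regularity of the distributional derivatives of a quasiconformal map is in hand.
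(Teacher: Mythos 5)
Your argument is correct and is essentially the standard proof of this classical fact; the paper does not prove the lemma itself but simply cites Ahlfors \cite{Ah}, where the same route (Beltrami equation a.e., hence $\partial_{\bar z}f=0$ as a distribution, then hypoellipticity of $\partial_{\bar z}$ via mollification, then injectivity forcing a M\"obius map on $\Chat$) is taken. One small remark: with the paper's definition the $L^2_{\loc}$ upgrade is not actually needed for your reduction, since $\partial_{\bar z}f$ is already an $L^1_{\loc}$ representative of the distributional derivative and vanishes almost everywhere by hypothesis, so it is the zero distribution directly.
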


We say that a sequence of functions $\{\psi_n\}$ has a limit function $\psi$ if there is a subsequence converging to $\psi $.  One of the remarkable properties of $K$-quasiconformal maps is their compactness.
 
\begin{lem}\label{Precompactness} Let  $\{\psi_n\}$ be a family of \
$K$-quasiconformal functions which coincide on at least three points. Then there exists at least one  limit function $\psi$, and it is $K$-quasiconformal.
\end{lem}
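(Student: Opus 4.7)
The plan is to reduce to the standard compactness theorem for normalized quasiconformal self-maps of $\hat{\mathbb{C}}$ by pre- and post-composing each $\psi_n$ with fixed Möbius transformations that send the three common points and their three common images to $0,1,\infty$.

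First I would let $z_1,z_2,z_3\in\hat{\mathbb{C}}$ be three distinct points on which all the $\psi_n$ agree, and write $w_i:=\psi_n(z_i)$ for the corresponding common values; the $w_i$ are distinct because each $\psi_n$ is a homeomorphism. Choose Möbius transformations $L,M\in\operatorname{PSL}(2,\mathbb{C})$ with $L(z_i)=i$ and $M(w_i)=i$ (in the standard labelling $1\mapsto 0$, $2\mapsto 1$, $3\mapsto\infty$), and set $\tilde\psi_n:=M\circ\psi_n\circ L^{-1}$. Since the paper already notes that composing a $K$-qc map with a conformal map on either side leaves the dilatation unchanged, each $\tilde\psi_n$ is a $K$-quasiconformal self-homeomorphism of $\hat{\mathbb{C}}$ fixing $0,1,\infty$.

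Next I would invoke the classical Hölder bound for normalized $K$-qc maps: for every compact $E\subset\mathbb{C}$ there are constants $C=C(K,E)$ and $\alpha=1/K$ such that
\[
|\tilde\psi_n(z)-\tilde\psi_n(w)|\leq C\,|z-w|^{\alpha}\qquad (z,w\in E),
\]
with the symmetric estimate at $\infty$ via the chart $1/z$ (see Ahlfors, Chapter II). This yields uniform equicontinuity of $\{\tilde\psi_n\}$ on $\hat{\mathbb{C}}$ in the spherical metric, so Arzelà--Ascoli produces a subsequence converging uniformly to a continuous map $\tilde\psi\colon\hat{\mathbb{C}}\to\hat{\mathbb{C}}$. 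The limit still fixes $0,1,\infty$, and is therefore non-constant.

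Finally I would cite the Bers--Bojarski convergence theorem: a locally uniform limit of $K$-quasiconformal homeomorphisms of $\hat{\mathbb{C}}$ is either constant or $K$-quasiconformal. Since $\tilde\psi$ is non-constant it is $K$-qc. Setting $\psi:=M^{-1}\circ\tilde\psi\circ L$ and passing the corresponding subsequence back gives the required limit of $\{\psi_n\}$, which is $K$-qc because $L$ and $M^{-1}$ are conformal. The only non-elementary input is the Bers--Bojarski closedness statement; everything else is normalization plus Arzelà--Ascoli, so the main obstacle is essentially just to cite the right theorem from \cite{Ah} rather than to argue anything new.
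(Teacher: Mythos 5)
Your proof is correct, and it is essentially the argument the paper relies on: the paper does not prove this lemma but simply cites \cite{Ah}, where the statement is established exactly by your route (Möbius normalization fixing $0,1,\infty$, the Hölder/equicontinuity bound plus Arzelà--Ascoli, and the closedness of normalized $K$-quasiconformal families under locally uniform limits). No gaps; the non-constancy of the limit via the three fixed points is the key normalization point and you handle it correctly.
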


By definition, a  quasiconformal map $\psi$ induces a Beltrami coefficient with $ \|\mu_\psi\|_\infty<1$; it is a natural question to   ask whether  any allowable Beltrami coefficient (which can be thought of as  a  measurable function with modulus less than 1) corresponds to a quasiconformal map. The answer is provided  by the Measurable Riemann Mapping Theorem.

\begin{thm}[Measurable Riemann Mapping Theorem] Let $\{\mu_\lambda\}$, $\|\mu_\lambda\|_\infty<1$, be a family of Beltrami coefficients depending holomorphically on  a parameter $\lambda$. Then there is a family of quasiconformal maps $\psi_\lambda$, depending holomorphically on $\lambda$, such that $\mu_\lambda=\partial_z \psi_\lambda/\partial_{\ov{z}} \psi_\lambda$.
\end{thm}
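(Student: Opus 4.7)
The plan is to combine the Beurling transform technique from the single-parameter MRMT with a Neumann series that manifestly preserves holomorphic dependence on $\lambda$. For a single Beltrami coefficient $\mu$ with $\|\mu\|_\infty \leq k < 1$, I would normalize by looking for $\psi(z) = z + h(z)$ with $h$ vanishing suitably at infinity. The Beltrami equation $\partial_{\bar z}\psi = \mu\, \partial_z \psi$ then reduces to $\partial_{\bar z}h = \mu(1+\partial_z h)$, and writing $\phi := \partial_{\bar z}h$ and inverting $\partial_{\bar z}$ by the Cauchy transform $C$ gives $\partial_z h = T\phi$, where $T := \partial_z \circ C$ is the Beurling singular integral operator. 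The equation becomes $(I-\mu T)\phi = \mu$.

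The analytic heart of the argument is that $T$ extends to an isometry on $L^2(\C)$ and, by Riesz--Thorin interpolation, its operator norm on $L^p$ depends continuously on $p$ with $\|T\|_{L^p} \to 1$ as $p\to 2$. Hence for $p$ close enough to $2$ (depending on $k$), $\|\mu T\|_{L^p} \leq k\|T\|_{L^p} < 1$, so the Neumann series
\[
\phi = \sum_{n\geq 0}(\mu T)^n \mu
\]
converges in $L^p$. Setting $h := C\phi$ and $\psi := z+h$, standard estimates (together with the distributional identities $\partial_{\bar z}C = I$ and $\partial_z C = T$) produce a $K$-quasiconformal solution with $K \leq (1+k)/(1-k)$. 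For the parametric statement I would apply the same formula with $\mu$ replaced by $\mu_\lambda$: each term $(\mu_\lambda T)^n\mu_\lambda$ is holomorphic in $\lambda$ as an $L^p$-valued function, since $\mu_\lambda$ is and $T$ is a fixed bounded operator. Uniform convergence of the Neumann series on parameter compacta, where $\|\mu_\lambda\|_\infty$ stays uniformly bounded away from $1$, then yields holomorphicity of the sum $\phi_\lambda$ in $\lambda$. Consequently $\psi_\lambda(z) = z + C\phi_\lambda(z)$ depends holomorphically on $\lambda$ for each fixed $z$.

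The main obstacle is bridging $L^p$-valued holomorphicity of $\phi_\lambda$ to pointwise holomorphicity of $\psi_\lambda(z)$. For the Neumann series one needs a uniform bound $\|\mu_\lambda\|_\infty \leq k < 1$ on compacta of parameter space, which is automatic locally; given that, the H\"older continuity properties of the Cauchy transform $C$ convert $L^p$-holomorphicity of $\phi_\lambda$ into pointwise holomorphicity of $\psi_\lambda(z)$ in $\lambda$. Both steps are standard once the Beurling $L^p$-estimate is in hand, but they are where the real work of the parametric MRMT lies.
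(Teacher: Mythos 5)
This theorem is quoted as background in the paper and not proved there---the paper simply refers to Ahlfors' lectures \cite{Ah}---and your Beurling-transform/Neumann-series outline is precisely the standard Ahlfors--Bers argument found in that reference, including the passage from $L^p$-valued holomorphicity of $\phi_\lambda$ to pointwise holomorphicity of $\psi_\lambda(z)$ via the H\"older estimates for the Cauchy transform (the remaining standard points being the reduction to compactly supported $\mu_\lambda$ and the verification that $\psi_\lambda$ is a homeomorphism). Note only that the paper's displayed formula $\mu_\lambda=\partial_z \psi_\lambda/\partial_{\ov{z}} \psi_\lambda$ has the quotient inverted relative to its own definition of the Beltrami coefficient, so your convention $\partial_{\ov{z}}\psi_\lambda=\mu_\lambda\,\partial_z\psi_\lambda$ is the intended one.
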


A Beltrami coefficient defines a measurable field of ellipses in the tangent space, with bounded ratio between the length of the  major and minor axis. On the other side, a measurable field of ellipses with bounded ratio between the major and minor axis  defines a Beltrami coefficient. This field of ellipses defines a so-called \emph{conformal structure} on $\C$.
 For a holomorphic map $g$, $\mu_g=0$, and the induced field of ellipses is in fact a field of circles. We refer to this field of circles as the \emph{standard conformal structure} ${\sigma_0}$ on $\C$. 

Given a holomorphic function $f$, an \emph{invariant line field} is a field of lines defined in the tangent space which is invariant under $f$.
A Beltrami coefficient $\mu$ defines an invariant line field through the directions of the major axis of the induced field of  ellipses in the tangent space. The induced line field is invariant  if and only if $f^*\mu=\mu$.
The following theorem is a special case of Theorem 1.1 in \cite{RvS}.
\begin{thm}[Absence of invariant line fields]\label{Absence of line fields}
Let $f_c(z)=e^z+c$ such that $\PP(\fc)$ is bounded. Then  $J(\fc)$ supports no invariant line fields.
\end{thm}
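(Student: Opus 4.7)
The approach I would use is the McMullen-type density-point blow-up argument, with the distortion input supplied by Theorem~A, Corollary~\ref{zero area} and Proposition~\ref{Local expansivity}. Suppose for contradiction that $\mu$ is a measurable invariant line field on $J(f_c)$ with support $E$ of positive Lebesgue measure. Since $\PP(f_c)$ is bounded and forward invariant it avoids a left half-plane and hence, by the remark in the introduction, $c$ is non-recurrent. In the case $c\in J(f_c)$ this makes $\PP(f_c)$ satisfy the hypotheses of Theorem~A, so $\PP(f_c)$ is hyperbolic and, by Corollary~\ref{zero area}, has Euclidean area zero. (The case $c\in F(f_c)$---i.e.\ $f_c$ hyperbolic, parabolic, or bounded-Siegel---reduces to the classical fact that $J(f_c)$ itself has Lebesgue measure zero in those subcases.) Pick a Lebesgue density point $z_0\in E$ with $d_0:=\dist(z_0,\PP(f_c))>0$.

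Next I would extract iterates admitting Koebe control. Choose $n_k\to\infty$ and $w_0\in J(f_c)\setminus\PP(f_c)$ with $f_c^{n_k}(z_0)\to w_0$ (possible because $\PP(f_c)$ is null and forward orbits of a positive-measure subset of $J(f_c)$ must recur on $J(f_c)$). By Proposition~\ref{Local expansivity} there exist $\delta>0$ and univalent inverse branches $\phi_k:V:=D_\delta(w_0)\to U_k$ of $f_c^{n_k}$ with $\phi_k(f_c^{n_k}(z_0))=z_0$ and $\diam U_k\to 0$. Koebe's distortion theorem on $D_{\delta/2}(w_0)$ then bounds the distortion of $\phi_k$ uniformly in $k$. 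After affine rescaling, normalize to $\psi_k:\D\to\C$ with $\psi_k(0)=0$ and $|\psi_k'(0)|=1$; by Koebe a subsequence converges locally uniformly to a univalent map $\psi$.

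Now the contradiction. Because $z_0$ is a Lebesgue density point of $E$ and $\psi_k$ has bounded distortion, the density of $\psi_k^{-1}(E)$ in $\D$ tends to $1$; on $\psi_k^{-1}(E)$ the pulled-back Beltrami coefficient equals, up to a vanishing error, $\mu(z_0)\cdot\overline{\psi_k'}/\psi_k'$, so its a.e.\ limit is a \emph{holomorphic} line field on $\D$ (Weyl's Lemma). On the other hand the $f_c$-invariance of $\mu$ together with $\phi_k$ being a branch of $f_c^{-n_k}$ gives that $\psi_k^*\mu$ coincides with $\mu$ restricted to $\psi_k(\D)$ transported by the rescaling; in the limit this identifies $\mu$, in a neighborhood of $w_0$, with a holomorphic line field. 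Spreading this conclusion by the (topologically transitive) dynamics of $f_c|_{J(f_c)}$, $\mu$ would be a holomorphic invariant line field in neighborhoods of every repelling periodic point of $f_c$, which forces every repelling multiplier to have rational argument---impossible, since the repelling multipliers of $f_c$ realize a dense set of arguments in $\R/2\pi\Z$.

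The hardest step is the density/distortion upgrade: turning ``$z_0$ is a density point of $E$'' and ``$\psi_k$ is Koebe-controlled'' into the statement that $\psi_k^*\mu$ converges a.e.\ to a genuinely holomorphic line field rather than merely to an almost-constant measurable function. This is the core McMullen rigidity lemma; in the exponential family its proof requires extra care because $J(f_c)$ can have very irregular local geometry (pinched Cantor bouquets, etc.) unlike for polynomials, and it is essentially the content of \cite{RvS}. Once it is granted, the rest of the argument is structurally identical to the polynomial (unicritical) case, with the singular value playing the role of the critical value and Corollary~\ref{zero area} supplying the null-measure estimate for the postsingular set.
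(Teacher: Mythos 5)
First, a point of calibration: the paper does not prove this statement at all --- it is imported as a special case of Theorem 1.1 of \cite{RvS} --- and your own sketch concedes that the decisive step ``is essentially the content of \cite{RvS}'', so as a replacement for that citation your argument is circular. Judged as a standalone proof, the first dynamical step has a genuine gap. You pick a density point $z_0$ of the support and assert the existence of $n_k\to\infty$ with $f_c^{n_k}(z_0)\to w_0\in J(f_c)\setminus\PP(\fc)$ ``because $\PP(\fc)$ is null and forward orbits of a positive-measure subset of $J(f_c)$ must recur on $J(f_c)$''. That inference is false: a measure-zero set can attract almost every orbit. For finite-type entire maps, Eremenko--Lyubich-type results (\cite{EL}, and the related discussion in \cite{RvS}) allow precisely the scenario in which almost every non-escaping point of $J(\fc)$ has $\omega$-limit set contained in $\PP(\fc)\cup\{\infty\}$ --- this is exactly what happens in hyperbolic and Misiurewicz examples --- so a density point with the accumulation property you need may simply not exist. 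Handling orbits that do accumulate on the postsingular set, via its hyperbolicity (your Theorem A / Corollary~\ref{zero area} input), is the actual hard content of \cite{RvS}, i.e.\ the part you defer. A smaller but real inaccuracy in the same step: Proposition~\ref{Local expansivity} does not apply as quoted, since its hypotheses require the pullbacks to meet a forward-invariant closed set avoiding a disk around $c$, which the orbit closure of $z_0$ need not do; univalence of the branches should instead come from choosing $D_\delta(w_0)$ disjoint from $\overline{\PP(\fc)}\cup\{c\}$, and the shrinking $\diam U_k\to 0$ needs its own (standard, but separate) normal-families argument.

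The endgame is also only asserted. Even granting a holomorphic invariant line field on an open set meeting $J(\fc)$, the chain ``spread by transitivity to neighborhoods of all repelling periodic points, deduce that every repelling multiplier has rational argument, contradict density of arguments of repelling multipliers'' contains two unproved claims: the spreading step needs care because $\mu$ is defined only a.e.\ on its support, and the density of arguments of repelling multipliers for an arbitrary exponential map with bounded postsingular set is itself a nontrivial statement requiring proof or a reference. The parenthetical disposal of the case $c\in F(\fc)$ (zero area of $J(\fc)$ in the hyperbolic/parabolic/Siegel subcases) likewise needs citations. In short, the architecture you describe (density point, Koebe-controlled univalent pullbacks, holomorphic limit line field, dynamical contradiction) is the right family of ideas, but the two places where such a proof can genuinely fail --- producing the blow-up point away from $\PP(\fc)$, and the final rigidity contradiction --- are exactly the ones left unjustified, whereas the paper itself simply quotes the result from \cite{RvS}.
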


Holomorphic motions are another widely used tool  in one-dimensional complex dynamics.
\begin{defn}
Let $(\Lambda,*)$ be a topological disk in $\C$ with a marked point $*$, and let $X$ be a subset of the Riemann sphere.
 A  \emph{ holomorphic motion} $\bf{h}$ of $X$ over $(\Lambda,*)$ is a family of injections $h_\lambda:X\ra\C$, $\lambda\in\Lambda$, depending holomorphically on $\lambda$ for each fixed $x\in X$, and such that $h_*$ is the identity. Define $X_\lambda:=h_\lambda(X).$ A set $X$ is said to \emph{move holomorphically} over $\Lambda$ if such a holomorphic motion exists. 
\end{defn}

{We will need the following result about extensions of holomorphic motions (see \cite[Theorem 1]{BeR}):   
\begin{thm}[\bf{Bers-Royden extension}]\label{Lambda Lemma}
 Let $\bf{h}$ be a holomorphic motion of a set $X\subset\C$ over  the disk $(\Lambda,*)$. Then each  $h_\lambda$ extends to a  quasiconformal self-map of $\hat{\C}$.
\end{thm}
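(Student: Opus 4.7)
The plan is to fix $\la\in\La$ and first show that $h_\la$ admits a continuous extension to $\ov{X}$ which is a quasisymmetric embedding into $\hat{\C}$, with dilatation bound depending only on the hyperbolic distance $d_\La(\la,*)$; then extend such a quasisymmetric map across each complementary component of $\ov{X}$ to obtain a global qc self-map of $\hat{\C}$.

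Step one is to reduce to a normalized setup: postcompose $\bf{h}$ with a $\la$-dependent family of Möbius transformations (chosen holomorphically in $\la$, which preserves the motion property and does not affect qc-extendability) so that three marked points of $X$ are fixed by every $h_\la$. Now I invoke the $\la$-lemma of Mañé-Sad-Sullivan: for any two distinct $x,y\in X$, the holomorphic map $\la\mapsto h_\la(x)-h_\la(y)$ from $\La$ into $\C^*$ is nowhere zero; applying Schwarz-Pick to the hyperbolic metric on $\C^*$ (punctured at $0$ and $\infty$ via the normalization) yields a uniform modulus of continuity for $h_\la$ on $X$ depending only on $d_\La(\la,*)$. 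This gives a continuous extension of $h_\la$ to $\ov{X}$.

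Step two upgrades continuity to quasisymmetry. For four distinct points $x_1,\dots,x_4\in X$, the cross-ratio $\la\mapsto [h_\la(x_1),h_\la(x_2),h_\la(x_3),h_\la(x_4)]$ is a holomorphic map $\La\ra\hat{\C}\setminus\{0,1,\infty\}$. Applying Schwarz-Pick with respect to the Poincaré metric on the triply punctured sphere, the hyperbolic distance in the target between the cross-ratio at $\la$ and at $*$ is bounded by $d_\La(\la,*)$. Translating this estimate for triples with $|x_1-x_2|=|x_1-x_3|$ gives a bound
\[
\frac{|h_\la(x_1)-h_\la(x_2)|}{|h_\la(x_1)-h_\la(x_3)|}\leq K(\la),
\]
where $K(\la)<\infty$ depends only on $d_\La(\la,*)$. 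This is the desired quasisymmetry estimate on $\ov{X}$.

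The third and hardest step is to extend the quasisymmetric embedding $h_\la:\ov{X}\ra\hat{\C}$ to a $K'$-qc self-map of $\hat{\C}$. Following Bers-Royden, each component $U$ of $\hat{\C}\setminus\ov{X}$ is a simply connected open set; uniformize $U$ by $\D$ via a Riemann map $\phi_U$, transfer the quasisymmetric boundary correspondence to $\partial\D$, and extend it into $\D$ via the Ahlfors-Weill section of the Bers embedding of universal Teichmüller space—i.e.\ a harmonic Beltrami differential manufactured from the Schwarzian of a suitable univalent function on the complementary disk. Pushing back by $\phi_U$ gives a qc extension inside each $U$, and the pieces glue continuously along $\ov{X}$ because they share the prescribed boundary values coming from the quasisymmetric embedding. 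The main obstacle is quantitative: the Ahlfors-Weill formula produces a qc extension only when the quasisymmetry constant on $\partial\D$ is sufficiently small, which forces one to restrict $\la$ to a subdisk $\La'\subset\La$ (the hyperbolic disk of radius $\tfrac13$ about $*$ suffices in the classical Bers-Royden argument). For $\la$ outside $\La'$, one iterates the construction along a chain of basepoints, which is where the delicate bookkeeping of dilatation constants enters.
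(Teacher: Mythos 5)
This statement is not proved in the paper at all: it is quoted verbatim from Bers--Royden (\cite[Theorem 1]{BeR}), so there is no internal argument to compare with, and any proof must essentially reproduce theirs (or S{\l}odkowski's stronger theorem). Your Steps 1 and 2 are a correct outline of the Ma\~n\'e--Sad--Sullivan $\la$-lemma: the normalization, the Schwarz--Pick estimate on $\C^*$ giving a uniform modulus of continuity, and the cross-ratio map into the thrice-punctured sphere giving a quasi-M\"obius bound on $\ov{X}$ are all standard and sound.

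Step 3, however, has genuine gaps. First, the components of $\hat{\C}\setminus\ov{X}$ are \emph{not} simply connected in general ($X$ may be a finite set or a Cantor set, so a component can be finitely or infinitely connected), and even when a component is simply connected its boundary need not be a Jordan curve, so the Riemann map does not induce a homeomorphic boundary correspondence and the quasisymmetry of $h_\la$ on $\ov{X}$ cannot simply be ``transferred to $\partial\D$''. Second, the Ahlfors--Weill/harmonic Beltrami machinery is not an extension operator for a single quasisymmetric boundary map; it is a section of the Bers embedding applied to a \emph{holomorphically varying} family of Schwarzians, and it is precisely the holomorphic dependence on $\la$ (via a Schwarz-lemma norm bound, whence the radius $1/3$) that Bers and Royden exploit -- your pointwise-in-$\la$ formulation discards the very structure that makes the construction work; for a single circle homeomorphism one would instead need Beurling--Ahlfors or Douady--Earle, which you do not have here because there is no circle. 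Third, the gluing step is unjustified: a homeomorphism of $\hat{\C}$ that is $K$-quasiconformal off the closed set $\ov{X}$ need not be globally quasiconformal, since $\ov{X}$ may have positive area or be a non-removable set of measure zero. Bers--Royden circumvent this by extending the \emph{whole motion} to a holomorphic motion of $\hat{\C}$ over the smaller disk and then invoking the fact that an injection of the entire sphere with bounded cross-ratio distortion is quasiconformal; some such global argument (or an explicit global Beltrami coefficient fed into the Measurable Riemann Mapping Theorem) is needed in place of your component-by-component gluing. The final chain-of-basepoints remark for passing from the $1/3$-disk to all of $\La$ is fine, but it rests on the unproven core.
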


}

%

\section{Combinatorics for   exponential maps}
In this section we recollect the relevant information needed about combinatorics in the exponential family.
In particular, we introduce dynamic and parameter rays and we describe the structure that they induce in the dynamical and parameter plane respectively. Both have been first introduced in \cite{BD1} with the term \emph{hairs}. A full classification in terms of rays of the set of escaping points and escaping parameters has subsequently been carried out  in \cite{SZ1} and \cite{FS} respectively. 

\subsection{Dynamic rays}\label{Dynamic rays}
 
 For all of this section, $f_c(z)=e^z+c$. 
Let $F(t)=e^t-1$ be a model function for real exponential growth, $\sigma$ be the left-sided shift map acting on $\Z^\N$. 

A sequence $\s=s_0 s_1\ldots\in\Z^\N$ is called \emph{exponentially bounded} if  there exists  $x\in\R$ such that    $ |s_i|\leq F^i (x)$ for every $i\in\N$. For the exponential family, the set of exponentially bounded sequences is called the set of  \emph{addresses}\footnote{Addresses are referred in \cite{SZ1} and \cite{FS} as \emph{exponentially bounded addresses} to stress out the fact that they satisfy the condition above.}
 and is denoted by $\SS$. An address is called \emph{periodic} or  \emph{preperiodic} if it is a periodic or preperiodic sequence.

 For any two sequences $\s=s_0 s_1 s_2\ldots$ and $\s'=s'_0 s'_1 s'_2
 \ldots$ in $\SS$, we consider the distance

 \begin{equation}\label{metric}
|\s-\s'|=\underset{s_i\neq s_i'}\sum{\frac{1}{2^i}}. \end{equation}

An address is called \emph{bounded} if $\|\s\|:=\sup_i |s_i|<\infty$.  We refer to $\SS$  as the \emph{combinatorial space} for $f$.  Observe that $\SS$ is endowed with the lexicographic order.

Definition, existence and properties of \emph{dynamic rays} for the exponential family are summarized in  the following theorem (\cite{SZ1}, Proposition 3.2 and Theorem 4.2): 

\begin{thm}[Dynamic rays, \cite{SZ1}]\label{ExistenceRays}
Let $c\in \C$ and $\s\in\SS$ be an address. Then, there exist $\tsc$ and a unique  maximal injective curve  $\gsc:(\tsc,\infty) \to I(f_c)$ such that
\begin{enumerate}
\item[(a)] $f_c(\gsc(t))=g^c_{\sigma\s}(F(t)). $
\item[(b)] $f_c^n (\gsc(t))=2\pi i s_n + F^n(t)+ o(e^{-F^n(t)})$ as $t\ra \infty$. 
\end{enumerate}
Moreover,
\begin{enumerate}
\item[(c)] For any $\s$, $\gsc(t)$ depends analytically on $c$ unless $c\in g^c_{\sigma^n\s}$ for some $n\in\N$. 
\end{enumerate}
\end{thm}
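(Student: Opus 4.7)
The plan is to construct the curves $\gsc$ as limits of successive inverse-branch pullbacks of ``ideal'' asymptotic positions. For each address $\s = s_0 s_1 \ldots \in \SS$ and each sufficiently large $t$, define the model points $w_n(\s,t) := 2\pi i s_n + F^n(t)$. The exponential map $\fc$, restricted to the fundamental strip $S_k := \{(2k-1)\pi < \Im z < (2k+1)\pi\}$, is a bijection onto $\C \setminus (-\infty, c]$, so it admits a well-defined inverse branch $\fc^{-1}|_k$ with image in $S_k$. Set
\[
z_n(\s,t) := \fc^{-1}|_{s_0} \circ \fc^{-1}|_{s_1} \circ \cdots \circ \fc^{-1}|_{s_{n-1}}\bigl(w_n(\s,t)\bigr).
\]
I would define $\gsc(t) := \lim_{n \to \infty} z_n(\s,t)$ and verify that this limit has all the required properties.

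The key estimate is that on any right half-plane $\{\Re z > R\}$ the inverse branches $\fc^{-1}|_k$ contract by a factor of order $e^{-R}$. Since $F^n(t)$ grows double-exponentially while the sequence $s_n$ is only exponentially bounded, the differences $|z_{n+1}(\s,t) - z_n(\s,t)|$ decay at a super-exponential rate in $n$, giving uniform convergence on compacta in $\{t > t_0\}$ for some $t_0$ depending on the initial coordinates of $\s$. The semiconjugacy (a) is immediate from the construction, since $\fc(z_n(\s,t)) = z_{n-1}(\sigma \s, F(t))$ passes to the limit. The asymptotic formula (b) then follows from the same contraction bound applied to $\fc^n(\gsc(t)) - w_n(\s,t)$; in particular (b) forces $|\fc^n(\gsc(t))| \to \infty$, so the curve lands in $I(\fc)$, and strict monotonicity of $w_0(\s,t)$ in $t$ combined with the semiconjugacy yields injectivity.

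To obtain the maximal domain $(\tsc, \infty)$, extend $\gsc$ leftward by the functional equation (a): as long as $g_{\sigma \s}^{c}(F(t))$ is defined and does not pass through the singular value $c$, one can pull back under $\fc^{-1}|_{s_0}$, and $\tsc$ is the infimum of the $t$ for which this procedure succeeds. Uniqueness of the maximal injective curve satisfying (a) and (b) is forced by (b) itself: any two such curves must agree for all large $t$ (their difference is $o(e^{-F^n(t)})$ for every $n$, hence vanishes identically on a ray to infinity), and then must agree wherever both are defined by analytic continuation along (a). Statement (c) is inherited from the construction, since each $\fc^{-1}|_k$ depends holomorphically on $c$ away from the slit $(-\infty,c]$; the uniform limit is therefore holomorphic in $c$ provided that no pullback in the limiting procedure ever meets $c$, which is exactly the excluded condition $c \in g_{\sigma^n \s}^c$ for some $n$.

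The main technical obstacle will be controlling the contraction estimate when the imaginary displacements $2\pi i s_n$ themselves grow nearly as fast as $F^n$. Here the exponential-boundedness condition $|s_n| \le F^n(x)$ built into the definition of $\SS$ must be used quantitatively: for $t$ large enough relative to $x$, the real part $F^n(t)$ of $w_n(\s,t)$ still dominates $2\pi|s_n|$ by a growing margin, so each inverse branch in the composition sees points in a deep right half-plane and the per-step contraction rates compound into the required summability of $\sum |z_{n+1} - z_n|$.
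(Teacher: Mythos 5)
This theorem is quoted from \cite{SZ1} and the paper supplies no proof of its own, so the only meaningful comparison is with the cited source: your construction --- pulling the model points $2\pi i s_n + F^n(t)$ back through the inverse branches prescribed by the address, using the $e^{-\Re z}$ contraction in far right half-planes together with exponential boundedness of $\s$ to get uniform convergence, then extending to the maximal domain $(\tsc,\infty)$ via the functional equation --- is essentially the Schleicher--Zimmer argument, so your approach matches the standard proof. The only loosely phrased step is uniqueness: rather than letting $n\to\infty$ in (b) for fixed $t$ (which (b), an asymptotic as $t\to\infty$ for each $n$, does not literally allow), one fixes a large $t$, notes that $f_c^n(\gamma_1(t))$ and $f_c^n(\gamma_2(t))$ stay a bounded distance apart in the same strip for all $n$, and pulls that bounded difference back through the strongly contracting inverse branches to force $\gamma_1(t)=\gamma_2(t)$; this is the standard fix and does not affect the soundness of your plan.
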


The curve $\gsc$ is called the \emph{dynamic ray} (or just \emph{ray}) of address $\s$. If $c$ is non-escaping, $\tsc$ does not depend on $c$.
We  omit the index $c$ and write $g_{\s}(t)$ when this creates no ambiguity. The estimates in $(b)$ together with the fact that dynamic rays do not intersect induce a \emph{vertical order} on dynamic rays near infinity, corresponding to the lexicographic order on $\SS$. Injectivity together with  $(c)$ imply  that the functions $\gsc\circ (g^{\ctilde}_\s)^{-1}$ define a holomorphic motion of the  dynamic ray $g^{\ctilde}_\s$ over any neighborhood of $\ctilde$ on which the ray $\gsc(t)$ is well defined.

A dynamic ray $g_\s^c$ is called \emph{periodic} (resp. \emph{preperiodic}) if $\s$ is a periodic sequence (resp. strictly preperiodic). 
A  dynamic ray $\gsc$ is said to  \emph{land} at $z\in \C$ if $\lim_{t\to \tsc}\gsc(t) = z$.
It is shown in \cite{Re1} that periodic and preperiodic dynamic rays land unless one of their forward images contains the singular value.  {If a ray $\gsc$ lands at a non-escaping point, then $\tsc=0$ (see \cite{FS} and the definition of minimal potential and on fast addresses  there).}

\begin{defn}[Combinatorial equivalence]\label{Combinatorial equivalence def}
 Given a set $\AA$ of addresses, we say that the rays with addresses in $\AA$ for $f_c,\fcp$  \emph{land together in the same pattern} whenever the following condition is satisfied: two rays  $g^c_\s,g^c_{\s'}$ with addresses in $\AA$ land together 
 if and only if  $g^{c'}_{\s},g^{c'}_{\s'}$ land together.
 If all periodic and preperiodic dynamic rays for $f_c$ and $\fcp$ land together in the same pattern,  we say that $f_c$ and $\fcp$ are \emph{combinatorially equivalent}. 
 \end{defn}
 
We have the following result about dependence  on the external address $\s$ (see e.g.  \cite[Lemma 4.7]{Re2}).

\begin{lem}[Transversal Continuity]\label{ContRays}
Let $\{\s_n\}\subset\SS$ be a sequence of addresses converging to an address $\s$ with $t_{\sn,c}\ra t_{\s,c}$.  Then $g_\sn(t)$ converges uniformly to $g_\s(t)$ on all intervals $[t_*,\infty]$ with $t_*>t_{\s,c}$.
\end{lem}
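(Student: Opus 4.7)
The plan is to combine the explicit asymptotic for rays in Theorem~\ref{ExistenceRays}(b) with a pullback argument exploiting the super-exponential expansion of $f_c$ along escaping orbits.

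First, I would handle the tail. From the formula $g_\s(t) = 2\pi i s_0 + t + o(e^{-t})$ (which is Theorem~\ref{ExistenceRays}(b) at $n=0$), together with the fact that the error term is uniform over addresses with a prescribed initial coordinate---a property inherent in the fixed-point construction of rays in \cite{SZ1}---one obtains $T$ large such that for $t\geq T$ and $n$ sufficiently large, both $g_\s(t)$ and $g_{\s_n}(t)$ lie within $\epsilon/3$ of $2\pi i s_0 + t$; here I use that $\s_n\to\s$ in the metric~(\ref{metric}) forces $s_{n,0}=s_0$ eventually. The hypothesis $t_{\s_n,c}\to t_{\s,c}$ guarantees that $g_{\s_n}$ is defined on $[t_*,\infty)$ for all large $n$.

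Next, I would propagate the estimate inward by pullback. Fix $t\in[t_*,T]$ and choose $k$, independent of $t$, such that $F^k(t_*)\geq T'$ with $T'$ as large as needed for the tail step. The functional equation gives
\[f_c^k(g_\s(t))=g_{\sigma^k\s}(F^k(t)),\qquad f_c^k(g_{\s_n}(t))=g_{\sigma^k\s_n}(F^k(t)),\]
and since $\sigma^k\s_n\to\sigma^k\s$, the tail estimate applied to the shifted addresses yields $|f_c^k(g_\s(t))-f_c^k(g_{\s_n}(t))|<\eta$, with $\eta$ as small as prescribed, uniformly in $t\in[t_*,T]$ and in $n$ large. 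Along the orbit $f_c^j(g_\s(t))$ the real parts are comparable to $F^j(t)\geq F^j(t_*)$, so $|f_c'(f_c^j(g_\s(t)))|=\exp\Re f_c^j(g_\s(t))$ grows super-exponentially; the corresponding inverse branch of $f_c^k$, defined on a tube around $f_c^k(g_\s(t))$, therefore contracts Euclidean distances by an enormous factor. Choosing the branch dictated by matching the first $k$ entries of $\s_n$ with those of $\s$, which determines the sequence of $2\pi i$-strips the iterates must traverse, transfers the estimate into $|g_\s(t)-g_{\s_n}(t)|<\epsilon$ uniformly in $t\in[t_*,T]$.

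The main technical point to pin down will be the set-up of the inverse branches: one must choose a tube around $g_\s([t_*,T])$ on which each $f_c^j$ is univalent---available because the orbit lies in $I(f_c)$ and so avoids the singular value---and verify that $g_{\s_n}(t)$ actually lands inside the pullback of the small disk around $f_c^k(g_\s(t))$ rather than inside a $2\pi i$-translate of it. This last point reduces exactly to the condition $s_{n,j}=s_j$ for $j<k$, which holds for $n$ large by convergence of addresses in $\SS$; dynamic rays being characterized by the sequence of $2\pi i$-strips through which they pass under iteration then forces the correct branch of the inverse to be selected throughout the pullback.
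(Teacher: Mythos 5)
The paper does not actually prove this lemma --- it is quoted from \cite[Lemma 4.7]{Re2} --- so your argument has to stand on its own. Its skeleton (uniform asymptotics from Theorem~\ref{ExistenceRays}(b) for the tail, plus a pullback--expansion argument for the compact part $[t_*,T]$) is the natural and essentially correct strategy, but there is one genuine gap: the branch-selection step. You claim the correct inverse branch of $f_c^k$ is \lq\lq dictated by matching the first $k$ entries of $\s_n$ with those of $\s$\rq\rq, on the grounds that dynamic rays are characterized by the sequence of $2\pi i$-strips they traverse under iteration. That characterization is only asymptotic: the entry $s_j$ records the strip in which $g_{\sigma^j\s}(u)$ lies for $u$ \emph{large}, not for all $u$ above the minimal potential. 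At the intermediate potentials $F^j(t)$ with $j$ small and $t\in[t_*,T]$, the points $f_c^j(g_\s(t))$ and $f_c^j(g_{\s_n}(t))$ need not lie in the static strips indexed by $s_j$ and $s_{n,j}$ (low-potential parts of rays do cross the lines $\Im z=(2k+1)\pi$; think of rays accumulating on themselves as in \cite{Ja}, \cite{Re3}). So equality of the first $k$ entries does not by itself place $g_{\s_n}(t)$ in the pullback of your small disk rather than in a $2\pi i\Z$-translate of it; as set up, identifying the branch amounts to already knowing the intermediate points are close, which is what you are trying to prove.

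The repair needs no new idea but a different architecture: propagate the branch information inward from the tail rather than reading it off the symbols. Fix $T'$ so large that for every relevant $j$ and all large $n$ the asymptotics pin $g_{\sigma^j\s}(u)$ and $g_{\sigma^j\s_n}(u)$, $u\ge T'$, within distance $<\pi$ of the same horizontal line $\Im z=2\pi s_j$ --- this is also where the uniformity in $n$ of the $o(e^{-t})$ error must be justified from the explicit bounds in \cite{SZ1}, using both hypotheses $\s_n\to\s$ and $t_{\s_n,c}\to t_{\s,c}$; it deserves more than the one clause you give it. On that region the two rays are uniformly close and the relevant inverse branch is unambiguous. Then pull back one application of the functional equation at a time, keeping the two preimage points in the same component by continuity of the single inverse branch along the compact arc of $g_{\sigma^{j}\s}$, and let the derivative product $\prod e^{\Re f_c^i(g_\s(t))}$ supply the contraction. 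Relatedly, your assertion that the real parts along the orbit are \lq\lq comparable to $F^j(t)$\rq\rq{} is true only for $j$ beyond a fixed threshold (low-level real parts may be small or negative); this is harmless, since only boundedly many factors are affected, but it should be said explicitly.
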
 


The following result is  well known and its proof is provided for the reader's convenience.

\begin{lem}[Landing of preimages of rays]\label{Splitting} Let $f(z)=e^z+c$ be an exponential map, 
and $\suno< \sdue$ be two  addresses  such that $g_\suno$ and $g_\sdue$ land together at some point $z\neq c$.
 If $\ov{g_\suno\cup g_\sdue}$ separates $c$ from $-\infty$,  for any $k\in\Z$, $g_{k\suno}$ lands together with $g_{(k-1)\sdue}$. If not, 
for any $k\in\Z$, $g_{k\suno}$ lands together with $g_{k\sdue}$.
\end{lem}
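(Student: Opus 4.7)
The plan rests on the fact that $f: \C \to \C \setminus \{c\}$ is a universal covering, with deck group generated by the translation $T: w \mapsto w + 2\pi i$. Since $z \neq c$, the fiber $f^{-1}(z)$ is a $\Z$-orbit $\{z_k = z_0 + 2\pi i k\}_{k \in \Z}$, and $f$ is a local biholomorphism near each $z_k$, so exactly one lift of $g_{\s_1}$ and one lift of $g_{\s_2}$ lands at each $z_k$. The relation $f(g_{k\s_i}) = g_{\s_i} \circ F$ identifies $\{g_{k\s_i}\}_{k\in\Z}$ as the set of all lifts of $g_{\s_i}$, and $T$-equivariance of the landing picture then gives integers $c_1, c_2$ such that $g_{k\s_i}$ lands at $z_{k+c_i}$ for every $k$. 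Only the difference $c_1 - c_2$ is intrinsic (a common shift can be absorbed by relabeling the $z_k$'s), so after normalizing $c_1 = 0$ the lemma reduces to showing $c_2 = 0$ in the non-separating case and $c_2 = 1$ in the separating case (in the latter case, $z_k$ is then the common landing point of $g_{k\s_1}$ and $g_{(k-1)\s_2}$).

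To compute $c_2 - c_1$, I would track a continuous branch of the argument. Writing $\phi_i(s)$ for the continuous lift of $\arg(g_{\s_i}(s) - c)$ normalized so that $\phi_i(s) \to 0$ as $s \to \infty$, the identity $f \circ g_{0\s_i} = g_{\s_i} \circ F$ together with $\Im g_{0\s_i}(t) \to 0$ as $t \to \infty$ forces $\Im g_{0\s_i}(t) = \phi_i(F(t))$. Passing to the landing, $\Im g_{0\s_i}(0^+) = \phi_i(0^+) = \arg(z - c) + 2\pi c_i$, and so $c_1 - c_2$ equals $\frac{1}{2\pi}$ times the total change of $\arg(w - c)$ as $w$ traces the closed curve $\widehat{\Gamma} := \Gamma \cup \{\infty\} \subset \widehat{\C}$ with the orientation \emph{from $\infty$ along $g_{\s_1}$ to $z$, then from $z$ along $g_{\s_2}$ back to $\infty$, closed by a small arc at infinity}. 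By the argument principle, this is the winding number of $\widehat{\Gamma}$ around $c$.

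Since $\widehat{\Gamma}$ is a Jordan curve in $\widehat{\C}$ bounding the two topological disks whose intersections with $\C$ are $U_1$ (the wedge between the rays) and $U_2$ (the component containing $-\infty$), this winding vanishes precisely when $c$ and $\infty$ lie in the same complementary disk, i.e., in the non-separating case; this gives $c_1 = c_2$ and, after normalization, the first conclusion. The main obstacle is pinning down the sign in the separating case: the convention $\s_1 < \s_2$ places $g_{\s_1}$ below $g_{\s_2}$ near infinity, so with the orientation above, a right-hand-rule check shows that $\widehat{\Gamma}$ keeps $U_1$ on its right and hence traces $U_1$ \emph{clockwise}; when $c \in U_1$ the winding is therefore $-1$, giving $c_1 - c_2 = -1$ and $c_2 = 1$, i.e., $g_{k\s_1}$ and $g_{(k-1)\s_2}$ share the landing point $z_k$. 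Making this orientation claim rigorous, together with verifying that the closing arc at infinity contributes negligibly to the argument change, is the technical step I expect to require the most care.
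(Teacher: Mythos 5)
Your proof is correct and follows essentially the same route as the paper: both exploit that $f$ is a universal covering of $\C\setminus\{c\}$, so the lifts of the two rays land pairwise over $z$, and both read off the pairing of first entries from the winding number of $\ov{g_\suno\cup g_\sdue}$ around $c$ (equivalently, the $2\pi$ jump in imaginary part along a component of the preimage) combined with the asymptotic vertical order of rays. Your version merely makes the paper's terse argument explicit by tracking a continuous branch of $\arg(w-c)$ and verifying the sign, and that sign check is indeed correct.
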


\begin{proof}
As $f$ is a local homeomorphism near $z$, preimages of $g_\suno$ and $g_\sdue$ land together pairwise at preimages of $z$. 
If the curve $\gamma:=\ov{g_\suno\cup g_\sdue}$ has winding number one with respect to $c$,  the imaginary part increases by $2\pi $ along any connected component of $f^{-1}(\gamma)$. 
If the winding number is zero, there is no increase in the imaginary part.
By the asymptotic estimates in Theorem \ref{ExistenceRays}, any two rays in a connected  component of $f^{-1}(\gamma)$ have to differ by exactly one unit in the first entry of their addresses in the first case, and by none in the second case.
\end{proof}

\subsection{Parameter rays, parabolic wakes, Misiurewicz wakes}\label{Parameter rays, parabolic wakes, Misiurewicz wakes}

The set of escaping parameters also consists of curves tending to infinity, called \emph{parameter rays} (see \cite{FS}, Theorem 3.7).

\begin{thm}[Parameter rays]\label{Parameter rays} 
Let $\s\in\SS$. Then there is $t_\s>0$, and a unique maximal  injective curve $G_\s: (t_\s,\infty)\ra\C$, such that, for all $t > t_\s$, $c=G_\s(t)$ if and only if $c=\gsc(t)$.  Also, $|G_\s(t)-(t+2\pi i s_0)|\rightarrow 0$ as $t\rightarrow \infty.$ 
\end{thm}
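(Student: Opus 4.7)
The plan is to realize $G_\s(t)$ as the locus of parameters $c$ for which the singular value lies on its own dynamic ray $g_\s^c$ at potential $t$: the defining equation of the parameter ray is the fixed-point equation $c = g_\s^c(t)$, so the proof reduces to solving this equation for each admissible $t$ and showing that the solution set is an injective curve. Taking $n=0$ in the asymptotic from \thmref{ExistenceRays}(b) yields $g_\s^c(t) = t + 2\pi i s_0 + o(e^{-t})$ as $t\to\infty$, which both suggests the approximate location of the solution and delivers the asymptotic $|G_\s(t) - (t+2\pi i s_0)| \to 0$ for free once existence is established.

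To make this precise I would run a Banach fixed-point argument mirroring the construction of dynamic rays. Fix $\s$ and, for each sufficiently large $t$, consider the space $\XX_t$ of sequences $(z_n)_{n\geq 0}\in\C^\N$ for which $z_n - 2\pi i s_n - F^n(t)$ is bounded in a suitably weighted sup-norm; the recursion $z_{n+1} = e^{z_n} + c$ together with the normalization $z_0 = c$ becomes a contraction on $\XX_t$ provided $t$ dominates the exponentially bounded growth rate of $\s$. Its unique fixed point simultaneously yields the parameter $c = G_\s(t)$ and the escaping orbit $(f_c^n(c)) = (g_{\sigma^n\s}^c(F^n(t)))$. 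Holomorphic dependence on $t$ follows from the analytic dependence of each iterate on the parameter together with \thmref{ExistenceRays}(c); injectivity of $G_\s$ is then a consequence of the vertical ordering of rays near infinity, since different values of $t$ produce orbits with asymptotically different real parts.

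The main obstacle is to identify the correct terminal value $t_\s$ and to extend $G_\s$ all the way down to it. The contraction argument above is only directly guaranteed for $t$ very large, so for smaller $t$ I would analytically continue the implicitly defined curve by applying the implicit function theorem to $H(c,t) := g_\s^c(t) - c$, which is valid as long as $\partial_c H \neq 0$. The curve must terminate either where this transversality fails or where the dynamic ray $g_\s^c$ itself ceases to be well defined because a forward image collides with the singular value, as accounted for in \thmref{ExistenceRays}(c). The delicate part of the proof in \cite{FS} is to show that no premature obstruction arises before $t_\s$, so that $G_\s$ is genuinely the maximal injective curve claimed; this requires careful monodromy control of the parameter dependence of the dynamic ray and is the real technical heart of the argument.
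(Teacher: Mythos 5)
The paper does not prove this statement at all: Theorem~\ref{Parameter rays} is imported verbatim from \cite[Theorem 3.7]{FS}, so there is no in-paper argument to compare yours against, and your proposal has to be judged as a self-contained proof sketch. As such, your overall strategy is the standard and correct one (define $G_\s(t)$ by the equation $c=\gsc(t)$, read off the asymptotics from \thmref{ExistenceRays}(b) with $n=0$, solve for large $t$, continue down to the minimal potential), but it falls short of a proof in several concrete places. First, the contraction you set up is phrased on the forward recursion $z_{n+1}=e^{z_n}+c$, which is expanding; the actual construction (as in \cite{SZ1} and \cite{FS}) contracts the \emph{backward} iteration $z_n=\log(z_{n+1}-c)+2\pi i s_n$ in a weighted sup-norm, and here the unknown is the coupled pair consisting of the orbit and the parameter through the normalization $z_0=c$. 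That coupling is exactly the new difficulty compared with the dynamic-ray construction, and it is not addressed beyond being named.

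Second, the \lq\lq if and only if\rq\rq\ in the statement requires that for each $t>t_\s$ the equation $c=\gsc(t)$ have a \emph{unique} solution $c$; your argument gives existence for large $t$, but uniqueness across parameters is what makes $G_\s$ a well-defined single curve, and it is nowhere established (injectivity of $t\mapsto G_\s(t)$, by contrast, is the easy part: it follows from injectivity of the dynamic ray $\gsc$, so the appeal to vertical order is not where the work lies). Third, the continuation down to $t_\s$, the identification of $t_\s$, and maximality are precisely what you defer to \cite{FS}; since this is the technical heart of the theorem, the proposal remains an outline. Finally, the asymptotic $|G_\s(t)-(t+2\pi i s_0)|\to 0$ does not come for free from \thmref{ExistenceRays}(b): the error term there depends on $c$, and along the parameter ray $c=G_\s(t)$ tends to infinity together with $t$, so one needs uniformity of the dynamic-ray estimates in the parameter, which is part of what \cite{FS} actually prove.
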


Recall that a hyperbolic component $W$ is a maximal connected set of parameters with an attracting periodic  orbit of the same period. There is a unique hyperbolic component $W_0$ of period 1 (the period of a hyperbolic component $W$ is the period of the attracting periodic  orbit for parameters in $W$). Parameter rays are approximated from above and from below by other parameter rays and by curves in hyperbolic components; see \cite[Lemma 7.1]{BR}, as well as the proof of the Squeezing Lemma in \cite[Section 4]{RS2} and the proof that the exponential bifurcation locus is not locally connected in \cite[Theorem 5]{RS3}.

It is known that parameter rays with periodic and preperiodic addresses land at parabolic and Misiurewicz parameters respectively (see \cite{Sc0}, \cite[Theorem 8.5]{RS2}, \cite{BeRe}).

The landing pattern of parameter rays with periodic and preperiodic addresses carves the structure of parameter plane, and is related to the landing pattern of dynamic rays in dynamical plane, as well as the position of hyperbolic components. This is exemplified in the next two theorems. 
The first theorem describes the relation between the landing pattern of parameter and dynamic  rays with periodic address (see  Proposition 4 and Proposition 5 in \cite{Re1}, as well as \cite{RS1}).

\begin{prop}[Parabolic wakes]\label{Parabolic wakes}
Given a hyperbolic component $W$, there are exactly two parameter rays $G_\sp, G_\sm$ of periodic addresses $\sp$ and $\sm$ which land together  on $\partial W$ and separate $W$ from $-\infty$. The connected component   $\WW(W)$ of $\C\setminus \ov{G_\sp\cup G_\sm}$  containing $W$ is called the \emph{(parabolic) wake} of $W$. The  dynamic rays $g_\sp$, $g_\sm$ and any of their forward iterates move holomorphically over  $\WW(W)$, and they land together in the dynamical plane for $f_c$ if and only if $c$ belongs to $\WW(W)$.  The rays $g_\sp$ and $g_\sm$ are called \emph{characteristic dynamic rays}, while  $G_\sp, G_\sm$  are called \emph{characteristic parameter rays}.
\end{prop}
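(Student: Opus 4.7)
My plan is to adapt the standard wake construction from unicritical polynomial dynamics to the exponential setting, following the outline of \cite{Re1, RS1}, using the interplay between parabolic parameters on $\partial W$, the characteristic rays of the associated parabolic cycle, and the landing of periodic parameter rays at parabolic parameters.

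First I would work at a fixed parabolic parameter $c_0\in\partial W$. By standard Fatou coordinate / parabolic perturbation theory and the classification of rays landing at repelling and parabolic periodic points in the exponential family (see \cite{Re1}), there are exactly two periodic dynamic rays $g_{\sp}^{c_0}, g_{\sm}^{c_0}$ of the same period as $W$ landing at the \emph{characteristic} parabolic periodic point, i.e.\ the point on the parabolic cycle whose immediate basin contains the singular value on its boundary. These two rays, together with their common landing point, separate the singular value $c_0$ from the rest of the parabolic cycle, and we declare their addresses to be $\sp,\sm$.

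Second, by the landing theorems for periodic parameter rays at parabolic parameters (\cite{Sc0}, \cite[Theorem 8.5]{RS2}, \cite{BeRe}), the parameter rays $G_{\sp}, G_{\sm}$ land; a symmetry/counting argument together with the fact that parameter rays are approximated above and below by curves in hyperbolic components (from \cite[Lemma 7.1]{BR} and \cite[Section 4]{RS2}) shows that they must land precisely at $c_0$, that they are the only two periodic parameter rays landing on $\partial W$ which bound $W$ from $-\infty$, and that $\ov{G_\sp\cup G_\sm\cup\{c_0\}}$ separates $W$ from a left half plane. Define $\WW(W)$ as the connected component of $\C\setminus \ov{G_\sp\cup G_\sm}$ containing $W$. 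Throughout $\WW(W)$, neither $g_\sp^c$ nor $g_\sm^c$ can contain $c$, because that would force $c\in G_\sp\cup G_\sm$; hence by Theorem~\ref{ExistenceRays}(c) the characteristic dynamic rays $g_\sp^c, g_\sm^c$ and all their forward iterates depend analytically on $c\in\WW(W)$, giving the required holomorphic motion.

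Third, I would establish the \emph{if and only if} statement by a monodromy/continuity argument. For $c\in W$ the dynamic rays $g_\sp^c, g_\sm^c$ land together at the characteristic repelling periodic point of $f_c$ lying on the boundary of the immediate basin of the attracting cycle; this combinatorial landing pattern is preserved under the holomorphic motion on $\WW(W)$ because the landing point moves continuously as a repelling periodic point (and Theorem~\ref{ExistenceRays} gives uniform control of the ray tails). Conversely, for $c$ in a different component of $\C\setminus \ov{G_\sp\cup G_\sm}$ one can deform a path from $\WW(W)$ to $c$ crossing exactly one of $G_\sp,G_\sm$; at such a crossing the corresponding dynamic ray runs into the singular value and breaks into two preimage branches (use Lemma~\ref{Splitting} for the combinatorial bookkeeping), which destroys the joint landing and cannot be restored on the other side.

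The main obstacle I expect is the last paragraph: ensuring that the joint landing pattern genuinely fails for every $c$ outside $\WW(W)$, not merely for parameters on the separating rays themselves. The cleanest way to handle this is to argue by contradiction using the holomorphic motion, namely to observe that if the two rays landed together on a larger connected open set $\widetilde\WW\supsetneq\WW(W)$, one could extend the holomorphic motion of the landing point across $G_\sp$ or $G_\sm$ using Theorem~\ref{Lambda Lemma}, contradicting the fact that at the boundary of $\WW(W)$ one of the rays passes through the singular value and hence ceases to be well defined.
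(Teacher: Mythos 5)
Note first that the paper itself offers no proof of Proposition~\ref{Parabolic wakes}: it is imported from the literature (Propositions 4 and 5 of \cite{Re1} together with \cite{RS1}), so you are attempting to reprove a cited result. Your first two steps (rays landing at the characteristic parabolic point, landing of $G_\sp,G_\sm$ at the root of $W$) lean on the same sources the paper cites and are fine as a reduction, but your sketch has two gaps, one minor and one serious. The minor one: to get the holomorphic motion of $g_\sp,g_\sm$ \emph{and all their forward iterates} over $\WW(W)$, Theorem~\ref{ExistenceRays}(c) requires $c\notin g^c_{\sigma^k\sp}\cup g^c_{\sigma^k\sm}$ for every $k\geq 0$, i.e.\ that none of the parameter rays $G_{\sigma^k\sp},G_{\sigma^k\sm}$ enters $\WW(W)$. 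Your argument only excludes $c\in g^c_\sp\cup g^c_\sm$ (the case $k=0$); ruling out the forward iterates is exactly the combinatorial statement that $(\sm,\sp)$ is the \emph{characteristic} pair of its orbit (the other rays of the orbit lie outside the wake in the vertical order), and it needs a proof, not just the remark that $c$ would lie on a parameter ray.

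The serious gap is the \lq\lq only if\rq\rq\ direction. Your monodromy argument shows only that \emph{on} $G_\sp$ or $G_\sm$ the joint landing fails, because there the ray contains the singular value on its forward orbit and is broken; the assertion that the landing \lq\lq cannot be restored on the other side\rq\rq\ is precisely what has to be proved and is nowhere justified. The fallback contradiction does not repair this: the set of parameters for which $g^c_\sp$ and $g^c_\sm$ land together at a repelling periodic point is open but a priori not connected, so positing a \emph{connected} open $\widetilde\WW\supsetneq\WW(W)$ begs the question, and nothing in the Bers--Royden extension (Theorem~\ref{Lambda Lemma}) prevents the two rays from landing together again in a different component of $\C\setminus\ov{G_\sp\cup G_\sm}$ whose boundary merely contains the parameter ray where the configuration breaks. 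Excluding that possibility is the actual content of the wake theorem; in \cite{Re1} and \cite{RS1} it is handled by an orbit-portrait type argument: if the two rays land together for some parameter, the landing pattern of the full orbit of rays together with the vertical order at infinity forces the singular value into the characteristic sector in the dynamical plane, and the dynamical--parameter correspondence then places $c$ inside $\WW(W)$. Without an argument of this kind (or an explicit appeal to those results, which is what the paper does), your proof of the \lq\lq if and only if\rq\rq\ clause is incomplete.
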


If $W_1, W_2$ are  two  hyperbolic components such that  $\partial W_1\cap \partial W_2\neq\emptyset$, and the period of $W_2$ is higher than the period of $W_1$, we say that  we say that $W_2$ is \emph{attached} to $W_1$.

The situation for preperiodic addresses is less explicitly stated in the literature. However, it is shown in \cite{LSV} (Theorem 3.4 and Corollary 3.5) that parameter rays with preperiodic addresses land at Misiurewicz parameters, and that each Misiurewicz parameter $c_0$ is the landing point of finitely many parameter rays $G_{\s_1}\ldots G_{\s_q}$ with preperiodic addresses. Moreover, the dynamical rays of addresses $\s_1\ldots \s_q$ land at $c_0$ in the dynamical plane for $f_{c_0}$. It is also not hard to prove that any Misiurewicz parameter is contained in a parabolic wake attached to $W_0$, by using the fact that all repelling fixed points are landing points of periodic dynamic rays. Using these results and the fact that dynamic rays move holomorphically wherever they are well defined, we obtain the following.

\begin{prop}[Misiurewicz wakes]\label{Misiurewicz wakes}
Let $c$ be a Misiurewicz parameter belonging to a parabolic wake $\WW$ attached to $W_0$. Let  $G_\suno\ldots G_{\s_q}$ be the parameter rays landing at $c$, $G_{\s^+}$ and $G_{\s^-}$ be the characteristic parameter rays bounding $\WW$.
The connected components of $\WW\setminus\bigcup_i G_{\s_i}$ are called \emph{Misiurewicz wakes}. The dynamic rays $g^c_{\s_1}\ldots g^c_{\s_q}$  move holomorphically in each Misiurewicz wake, and they land together in the dynamical plane for $c$ if and only if $c$ belongs to  one of the Misiurewicz wakes  not  containing $G_{\sp}$ and $G_{\sm}$ in its boundary.
\end{prop}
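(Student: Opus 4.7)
My plan is to prove the proposition in three stages. The first stage establishes the holomorphic motion of the rays and of their landing points throughout each Misiurewicz wake. By Theorem~\ref{ExistenceRays}(c), the ray $g^c_{\s_i}$ depends analytically on $c$ except where the singular value belongs to a forward iterate $g^c_{\sigma^n \s_i}$; by the very definition of parameter rays (Theorem~\ref{Parameter rays}), this exceptional locus is precisely $\bigcup_j G_{\s_j}$. Hence each $g^c_{\s_i}$ and its finitely many forward iterates move holomorphically on $\WW \setminus \bigcup_j G_{\s_j}$. Since the only non-repelling periodic orbit met by $\WW$ sits on $\overline{W_0}$ (so not on any Misiurewicz wake) and since the orbit of $g^c_{\s_i}$ avoids $c$ off the parameter rays, the result of \cite{Re1} cited in Section~\ref{Dynamic rays} ensures that $g^c_{\s_i}$ lands at a preperiodic point $z_i(c) \in J(f_c)$. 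Being a preimage of a holomorphically moving repelling periodic orbit (and never colliding with the singular value off $\bigcup_j G_{\s_j}$), $z_i(c)$ also moves holomorphically on each Misiurewicz wake.

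The second stage uses connectedness. Whether two holomorphic functions $z_i(c), z_j(c)$ agree is both a closed and an open condition on a connected domain, so the coincidence pattern of the landing points is constant on each Misiurewicz wake $V$. Therefore the $q$ rays either land together throughout $V$ or never land together on $V$, reducing the ``if and only if'' to a combinatorial identification of the correct wakes.

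The third stage identifies those wakes. By the result of \cite{LSV}, at $c = c_0$ all rays $g^{c_0}_{\s_i}$ land at $c_0$ itself. Using Lemma~\ref{ContRays} together with continuity of landing points, whenever a Misiurewicz wake $V$ has $c_0$ on its boundary the landing points $z_i(c)$ all converge to $c_0$ as $c \to c_0$ within $V$. The question then becomes whether the analytic continuation of the ``common landing point'' persists across $V$. Crossing a parameter ray $G_{\s_k}$ corresponds in the dynamical plane to the ray $g^c_{\s_k}$ sweeping through the singular value, forcing it to land on a different preimage of the underlying repelling periodic orbit and thus splitting it off from the group. Matching this monodromy with the cyclic order of the parameter rays $G_{\s_1}, \ldots, G_{\s_q}$ at $c_0$ relative to the bounding characteristic rays $G_{\sp}, G_{\sm}$ of $\WW$ should show that the common landing is preserved exactly in those Misiurewicz wakes that do not have $G_{\sp}$ or $G_{\sm}$ on their boundary.

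The main obstacle I anticipate is this last combinatorial identification: making precise why the characteristic rays $G_{\sp}, G_{\sm}$ of $\WW$ are the correct ``combinatorial barriers''. This requires tracking, on each Misiurewicz wake adjacent to $c_0$, which preimage branch of the moving repelling periodic orbit the landing point $z_i(c)$ selects, and then invoking an analogue of Proposition~\ref{Parabolic wakes} (applied near the Misiurewicz parameter $c_0$ rather than near a parabolic point) to dictate that these branches continue to agree precisely in the Misiurewicz wakes on the ``inside'' of $\WW$. The dynamical picture at $c_0$, where $c_0$ is simultaneously the singular value and the common landing point of all $g^{c_0}_{\s_i}$, should supply the dictionary translating the local behaviour across each $G_{\s_k}$ into the global combinatorial statement.
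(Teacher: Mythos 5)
The decisive gap is in your Stage 1, and it is exactly the point your Stage 3 then defers. By Theorem~\ref{ExistenceRays}(c), analytic dependence of $g^c_{\s_i}$ on $c$ fails whenever $c\in g^c_{\sigma^n\s_i}$ for some $n\ge 0$, and by Theorem~\ref{Parameter rays} this happens precisely on the parameter rays $G_{\sigma^n\s_i}$ of \emph{all} forward shifts of $\s_i$, not only on $\bigcup_j G_{\s_j}$; for $n\ge 1$ such a parameter is escaping and the ray $g^c_{\s_i}$ is cut off at a preimage of the singular value, so it neither lands at a repelling preperiodic point nor moves holomorphically there. Hence, to get a holomorphic motion of the $g^c_{\s_i}$ over an entire Misiurewicz wake, you must prove the combinatorial statement that no ray $G_{\sigma^n\s_i}$ with $n\ge 1$ enters the wakes under consideration; equivalently, that the forward orbit of the singular value of $f_{c_0}$ (here $c_0$ is the Misiurewicz parameter), together with the dynamic rays landing on it, avoids the corresponding sectors cut at $c_0$ by $g^{c_0}_{\s_1},\dots,g^{c_0}_{\s_q}$. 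This is the preperiodic analogue of the characteristic-arc property of orbit portraits; it is not a formal consequence of the definition of parameter rays, and without it your exceptional locus is misidentified. For the same reason your stability region must also exclude parameters where the eventually periodic target cycle ceases to be repelling: your parenthetical that the only non-repelling orbit met by $\WW$ sits on $\overline{W_0}$ is not correct, since $\WW$ contains hyperbolic components of all periods, and deciding which of their closures can meet a given Misiurewicz wake is again a combinatorial question of the same kind.

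Two further points. In Stage 2, coincidence of two holomorphic functions is a closed condition but not an open one (consider $z$ and $-z$ at $0$), so local constancy of the landing pattern does not follow from holomorphy alone; one needs the standard dynamical fact that landing together at, and landing apart from, a repelling preperiodic point are both open conditions where the rays and the cycle are stable, and the hypotheses of that fact again involve the full forward orbits. In Stage 3 you only consider crossings of the rays $G_{\s_k}$ themselves; but at such a parameter the singular value lies on $g^c_{\s_k}$, not on a forward image of $g^c_{\s_i}$ (granted the combinatorial fact above), so no ray $g^c_{\s_i}$ breaks there, and the actual mechanism for the change of pattern is the rearrangement of the pullback branches of the eventually periodic landing point. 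You explicitly leave this identification --- in particular the ``only if'' direction, that the rays do not land together on the wake whose boundary contains $G_{\sp}$ and $G_{\sm}$ --- as something that ``should'' work. So the proposal outlines the right kind of stability-plus-monodromy argument, but the two ingredients carrying the content (the characteristic-sector combinatorics and the determination of the pattern in each component) are missing; note that the paper does not reprove them either, but instead assembles the statement from \cite{LSV} (landing of $G_{\s_1},\dots,G_{\s_q}$ at $c_0$ and of the dynamic rays $g^{c_0}_{\s_i}$ at $c_0$) together with the holomorphic motion of dynamic rays wherever they are defined.
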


\subsection{Fibers}


The \emph{extended (parameter) fiber}  of a parameter $c_0$ with all periodic orbits repelling is the set of parameters which cannot be separated from $c_0$ by a pair of parameter rays with periodic or preperiodic  addresses landing together. Observe that by continuity of the map which associates to a parameter to the multiplier of a given orbit, all parameters with an indifferent cycle belong to the boundaries of hyperbolic components.  The \emph{reduced (parameter) fiber} of a parameter $c_0$ is the extended fiber intersected with the set of  non-escaping parameters. For parameters with indifferent or attracting periodic orbits, also  curves formed only by parameter rays, hyperbolic parameters and finitely many parabolic parameters have to be considered as separation lines (see \cite{RS3} for alternative definitions). 

\noindent The \emph{dynamical fiber} of a point $z_0$ is the set of points which cannot be separated from $z_0$ by a pair of dynamic  rays with  periodic  or preperiodic addresses landing together. 

\noindent A fiber is \emph{trivial} if it contains at most one non-escaping parameter or point.

\noindent Fibers cannot contain the accumulation sets of arbitrarily many parameter rays (see [\cite{RS3}, Lemma 17]):
\begin{lem}\label{17}
Any extended fiber contains the accumulation set of at most finitely many parameter rays, and these parameter rays have bounded addresses.
\end{lem}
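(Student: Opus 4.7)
The plan is to combine the asymptotic description of parameter rays at infinity with a compactness argument in the address space $\SS$.

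First I would show that the extended fiber of any non-escaping parameter $c_0$ is contained in a bounded region of the parameter plane. Every non-escaping parameter lies in the principal wake $\WW(W_0)$, bounded on either side by the two characteristic parameter rays of \propref{Parabolic wakes}; an extended fiber is then cut off from $\infty$ by pairs of parabolic or Misiurewicz parameter rays landing together on $\partial \WW(W_0)$, as produced in \propref{Parabolic wakes} and \propref{Misiurewicz wakes}.

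Next I would show that only parameter rays with bounded addresses can have accumulation points inside such a bounded region $K$. The asymptotic estimate $|G_\s(t)-(t+2\pi i s_0)| \to 0$ of \thmref{Parameter rays} immediately forces $|s_0|$ to be bounded in terms of $K$. To bound the remaining entries I would exploit the identification $c = G_\s(t) \Leftrightarrow c = g_\s^c(t)$, together with the semiconjugacy $f_c(g_\s^c(t)) = g_{\sigma\s}^c(F(t))$ from \thmref{ExistenceRays}(a): an accumulation point $c^*$ of $G_\s$ in $K$ corresponds, under forward iteration, to accumulation data for the shifted rays in the dynamical planes of nearby parameters. Propagating this recursively through iterates of the shift should yield a uniform bound on $|s_n|$, so $\s$ is bounded.

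Finally, assume for contradiction that infinitely many parameter rays $G_{\s_k}$ with bounded addresses accumulate in the extended fiber of $c_0$. Since $\{\s\in\SS:\|\s\|\leq M\}$ is compact in the metric~(\ref{metric}), extract $\s_k \to \s^*$. By the parameter-plane analogue of \lemref{ContRays}, the rays $G_{\s_k}$ converge uniformly to $G_{\s^*}$ on $[t_*,\infty)$ for each $t_* > t_{\s^*}$. Using that every parameter ray is approximated from above and below by parameter rays of periodic and preperiodic addresses (Section~\ref{Parameter rays, parabolic wakes, Misiurewicz wakes}), for large $k$ one can insert a pair of landing periodic or preperiodic parameter rays strictly between $G_{\s_k}$ and $G_{\s^*}$. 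Such a pair bounds a parabolic or Misiurewicz wake and hence separates points of the accumulation set of $G_{\s_k}$ from $c_0$, contradicting the definition of an extended fiber.

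The step I expect to be the main obstacle is the second one: transferring an accumulation bound under the shift between parameter and dynamical planes requires bridging two settings carefully, since preimages of bounded sets under $f_c$ can extend far in the imaginary direction, and uniform control on $|s_n|$ is delicate. The first and third steps are essentially topological and compactness arguments once the address bound is secured.
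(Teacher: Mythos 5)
First, a point of comparison: the paper does not prove this statement at all; it is quoted from \cite{RS3} (Lemma 17 there), and the actual argument lives in the combinatorial machinery of \cite{RS2} and \cite{RS3} (vertical order, wakes, and in particular the Squeezing Lemma). So your attempt has to be judged on its own merits, and there is a genuine gap, concentrated exactly where you suspected: steps 1 and 2. Step 1 is unjustified and in general false: an extended fiber need not lie in a bounded region of parameter space, since by definition it may contain escaping parameters and the accumulation sets of non-landing parameter rays, and such accumulation sets can be unbounded -- indeed the lemma itself allows finitely many of them inside the fiber. Step 2 is a non sequitur even if one grants a bounded region $K$: the estimate $|G_\s(t)-(t+2\pi i s_0)|\to 0$ of \thmref{Parameter rays} describes the ray near $t=\infty$, not near its accumulation end, so the location of the accumulation set puts no constraint on $s_0$ through it. Worse, the implication you are aiming for (bounded accumulation set $\Rightarrow$ bounded address) is false: a parameter ray landing at an escaping parameter has a singleton, hence bounded, accumulation set but an \emph{unbounded} address (this is exactly the remark after \lemref{Boundaries of non-hyperbolic components}, citing \cite{FRS}). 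So boundedness of the addresses cannot come from metric boundedness at all; it must come from the separation structure of fibers, i.e.\ one must show that a ray with unbounded address, together with its accumulation set, can be cut off from $c_0$ by landing ray pairs -- this is what the Squeezing Lemma of \cite{RS2} is for, and it is the real content of the lemma.

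Your step 3 is the right kind of argument for the finiteness part \emph{once} a uniform address bound is available, but as written it also has a jump: a landing pair inserted between $G_{\s_k}$ and $G_{\s^*}$ separates their tails near infinity, yet $c_0$ may well lie on the same side as $G_{\s_k}$, in which case no contradiction with the fiber definition arises from that pair. One should instead take several of the accumulating rays, monotone in the vertical order, insert a landing pair between each consecutive two, and observe that $c_0$ can meet the closure of at most two of the resulting complementary regions, so some ray's accumulation set is separated from $c_0$ (with a little extra care for the degenerate case where an accumulation set sits on the separating curve itself). You also need to justify that between any two distinct addresses there exists a pair of periodic or preperiodic parameter rays landing together whose union actually separates them; "approximation from above and below by rays and curves in hyperbolic components" is weaker than this and by itself does not produce a separating curve -- density of characteristic (parabolic) address pairs, as in \cite{RS1}/\cite{RS2}, is what is needed.
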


\section{Puzzles and Parapuzzles}\label{Puzzles}
 Puzzles are collections of progressively finer  Markov partitions of the dynamical plane, which can be used to study the dynamics on the Julia set via symbolic dynamics. Similarly, parapuzzles are progressively finer partitions of the parameter space which are associated to the puzzles constructed in dynamical plane, hence giving a combinatorial description of the parameters in them.

 The  construction of puzzles  for polynomials (usually called Yoccoz Puzzle), as well as the construction of parapuzzles,  has turned out to be very useful in proving rigidity results.  Puzzles for polynomials  have been introduced in \cite{BH}, \cite{Hu} and \cite{Mi2}. Various versions  have been constructed also for other maps, for example rational maps (\cite{Ro}). 
In this section we introduce for the first time the construction of puzzles and parapuzzles for the exponential family, hoping that this construction might turn useful for further advances in the field.

\subsection{Puzzles and combinatorial non-recurrence}\label{Puzzles 1}

{From now on we assume $c$ to be a nonescaping parameter and that there are no preperiodic rays landing at $c$. See Remark~\ref{Escaping puzzle} for some comments about this assumptions.} {Since the case in which $c$ is the landing point of a preperiodic rays has been treated in \cite{Be}, and we stated the Rigidity Conjecture in terms of reduced fibers, these assumptions are no loss of generality.} 

Let $\Gamma$ be a closed forward invariant  graph formed by finitely many periodic rays together with their landing points, and consider the connected components of $\C\setminus \Gamma$. 
For each $n$, the sets  $$\Gamma_n:=\bigcup_{j=0}^{n}f^{-j}(\Gamma)$$ also partition $\C$ into countably many components  (see Figure~\ref{Puzzle Figure} for an example of puzzle).
 
\begin{defn}
 The countable collection of connected components of $\C\setminus\Gamma_n$ is called the \emph{puzzle of level $n$} (induced by $\Gamma$); the connected components themselves are called \emph{puzzle pieces of level $n$} and are denoted by $Y^{(n)}_j$.
 For each $n\in\N$ there is exactly one puzzle piece of level $n$ containing a left half plane, which is called the \emph{branching puzzle piece} and denoted by $Y^n_*$. There is also exactly one \emph{singular puzzle piece} containing $c$, and the branching puzzle piece of level $n+1$ is the preimage of the singular puzzle piece of level $n$. 
 \end{defn}

If $c$ is non-escaping {and is not the landing point of a preperiodic ray which maps to $\Gamma$ under finitely many iterations}, it never belongs to $\Gamma_n$, hence at each level  $c$ belongs to the interior of some puzzle piece.

 \begin{figure}[hbt!]
\begin{center}
\def\svgwidth{13cm}

\begingroup
  \makeatletter
  \providecommand\color[2][]{%
    
    \renewcommand\color[2][]{}%
  }
  \providecommand\transparent[1]{%
   
    \renewcommand\transparent[1]{}%
  }
  \providecommand\rotatebox[2]{#2}
  \ifx\svgwidth\undefined
    \setlength{\unitlength}{1998.703568pt}
  \else
    \setlength{\unitlength}{\svgwidth}
  \fi
  \global\let\svgwidth\undefined
  \makeatother
  \begin{picture}(1,0.48994462)%
    \put(0,0){\includegraphics[width=\unitlength]{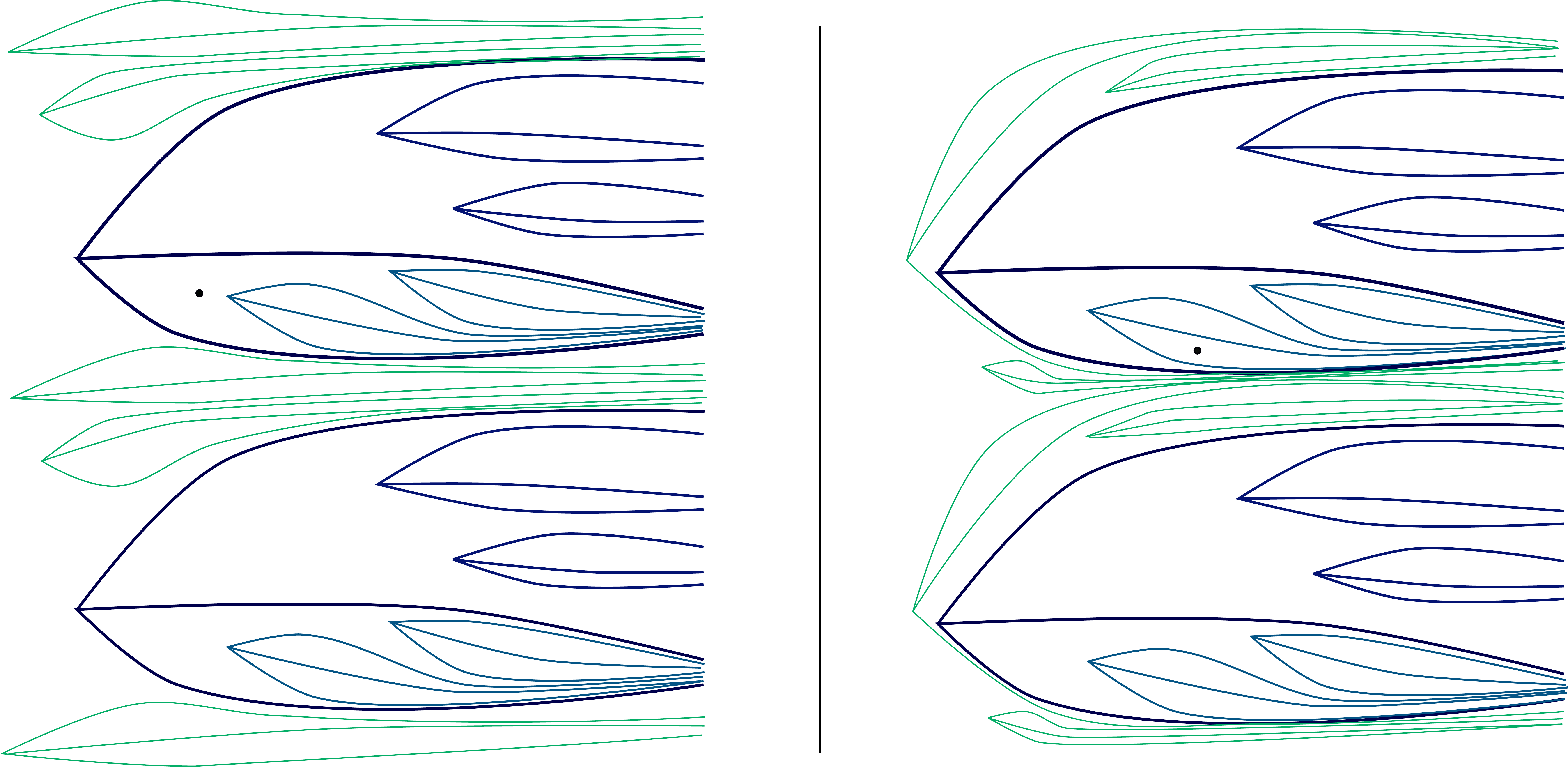}}%
    \put(0.13785413,0.31056612){\color[rgb]{0,0,0}\makebox(0,0)[lb]{\smash{$c$}}}%
    \put(0.74738295,0.27149743){\color[rgb]{0,0,0}\makebox(0,0)[lb]{\smash{$c$}}}%
  \end{picture}%
\endgroup
\end{center}
\caption{\small Two possible topological configurations for the puzzle of level 4 with $\Gamma$ formed by 3 rays of period 3 landing at a fixed point. The actual configuration of puzzle pieces of a certain level  depends on the position of $c$ with respect to the puzzle pieces of lower level. The configuration of puzzle pieces is invariant under translation by $2\pi i $. As the level increases, the new puzzle pieces are drawn in a paler color.}
\label{Puzzle Figure}
\end{figure}

The singular value $c$ is called  \emph{combinatorially non-recurrent} if there exists some forward invariant graph $\Gamma$ (formed by the closure of finitely many periodic dynamic rays) and some $n$ such that the  connected component of $\C\setminus\Gamma_n$ containing  $c$ does not intersect $\PP(f)$. 

Observe that parabolic and hyperbolic parameters are combinatorially recurrent. It is also known that the boundary of Siegel Disks is contained in the $\omega$-limit set of a recurrent singular value (see \cite{RvS}, Corollary 2.10).  So, in our setting,  the condition of combinatorial non-recurrence implies that $J(f)=\C$. Combinatorial non-recurrence is a more restrictive condition than non-recurrence; however, we show in Section~\ref{Non-recurrence and combinatorial non-recurrence} that  when $\PP(f)$ is bounded and contained in the Julia set, non-recurrence implies combinatorial non-recurrence.

\begin{rem}\label{Escaping puzzle}.
\begin{enumerate}
\item If $c$ is a non-escaping parameter belonging to a parabolic wake attached to the period one hyperbolic component $W_0$, there is a repelling fixed point  $\alpha=\alpha(c)$, defined as an analytic continuation of the attracting fixed  point, which is the landing point of  finitely many periodic rays permuted transitively by the dynamics. These are the rays used for the most traditional puzzle construction. We use a more general $\Gamma$ in order to allow for a weaker form of combinatorial non-recurrence (so that Theorem D holds). It is not known yet whether all non-escaping parameters are contained in such a wake, however, this is expected to be the case.  A sufficient condition for this to happen is that $\PP(f)$ is bounded (see \cite{BL}, Corollary 4.14).

 \item It is possible to define the label $j$ for puzzle pieces of level $n$ by using sequences in $\Z^n$, in a way that is consistent with the dynamics and with the vertical order of the dynamic rays at infinity. However, this takes some extra work and it is not needed for the rest of the paper; for simplicity, we will consider $j\in\N\cup\{*\}$. 
 
\item If $c$ is an escaping parameter or  if $c$ is the landing point of a  preperiodic dynamic ray, a puzzle constructed from a graph $\Gamma$ is \emph{allowable} if $\{f^n(c)\}_{n\in\N}\cap\Gamma=\emptyset$. Such a parameter is combinatorially nonrecurrent if there exists an allowable  puzzle satisfying the notion of combinatorial nonrecurrence. {Since the case in which $c$ is the landing point of a preperiodic rays has been treated in \cite{Be}, and we stated the Rigidity Conjecture in terms of reduced fibers, these assumptions are no loss of generality.}

\end{enumerate}
\end{rem}

The next lemma follows directly from the definition of puzzle pieces, and the basic properties of dynamic rays.

\begin{lem}\label{Puzzle dynamics} The collections $\{Y^{(n)}_j\}$ define a puzzle for $f$, i.e. 

\begin{itemize}
 \item  
 $\forall\; n\in\N, i,j\in\N\cup\{*\}$, either $\Ynu_j \subset Y^{(n)}_i$ or $\Ynu_j \cap Y^{(n)}_i=\emptyset$.
\item $\forall\;  n,i\in\N$, $Y^{(n)}_i$ maps univalently to  $Y^{(n-1)}_j$ for some $j$ depending on $i$;
\item $Y^{(n)}_*$ is mapped to the singular puzzle piece of level $n-1$ as an infinite degree covering.
\end{itemize}
\end{lem}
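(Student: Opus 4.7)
The proof plan rests on the single preliminary observation that $f(\Gamma_n)\subset\Gamma_{n-1}$, together with the fact that $f_c:\C\to\C\setminus\{c\}$ is a covering map (since $c$ is the omitted asymptotic value and $f_c'(z)=e^z$ never vanishes). I would first verify the inclusion: by definition,
\[ f(\Gamma_n)=f\!\left(\bigcup_{j=0}^n f^{-j}(\Gamma)\right)=f(\Gamma)\cup\bigcup_{j=0}^{n-1}f^{-j}(\Gamma)\subset \Gamma_{n-1}, \]
where the last inclusion uses forward invariance of $\Gamma$. Equivalently $f^{-1}(\Gamma_{n-1})\subset\Gamma_n$, which means $f$ sends every connected component of $\C\setminus\Gamma_n$ inside a single component of $\C\setminus\Gamma_{n-1}$.

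The nesting property in the first bullet is then immediate: since $\Gamma_n\subset\Gamma_{n+1}$, every component of $\C\setminus\Gamma_{n+1}$ is contained in some component of $\C\setminus\Gamma_n$, so for each pair $(i,j)$ either $\Ynu_j\cap Y^{(n)}_i=\emptyset$ or $\Ynu_j\subset Y^{(n)}_i$. For the second bullet I would argue as follows. Fix $i\neq *$ and let $Y^{(n-1)}_j$ be the piece containing $f(Y^{(n)}_i)$. A short calculation shows $f^{-1}(Y^{(n-1)}_j)\cap\Gamma_n=\emptyset$ (using again forward invariance of $\Gamma$ and $Y^{(n-1)}_j\cap\Gamma_{n-1}=\emptyset$), so $Y^{(n)}_i$ is a whole connected component of $f^{-1}(Y^{(n-1)}_j)$. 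If $Y^{(n-1)}_j$ contained $c$, then $f^{-1}(Y^{(n-1)}_j)$ would contain a left half plane, forcing $Y^{(n)}_i=Y^{(n)}_*$, contradicting $i\neq *$. Hence $c\notin Y^{(n-1)}_j$, so $Y^{(n-1)}_j$ is a simply connected subset of $\C\setminus\{c\}$ and inverse branches of the covering $f_c$ exist on it; in particular $f_c:Y^{(n)}_i\to Y^{(n-1)}_j$ is univalent.

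For the third bullet, let $P$ be the singular piece of level $n-1$. Since $Y^{(n)}_*$ contains a left half plane and $f$ maps every left half plane $\{\Re z<-M\}$ onto a punctured disk around $c$ as an infinite-degree covering, $f(Y^{(n)}_*)$ accumulates at $c$; combined with $f(Y^{(n)}_*)\subset Y^{(n-1)}_k$ for some $k$, this forces $Y^{(n-1)}_k=P$. Since $f^{-1}(c)=\emptyset$, $f^{-1}(P)=f^{-1}(P\setminus\{c\})$, and the covering-space computation with $\pi_1(P\setminus\{c\})\cong\Z$ generated by a loop around $c$ shows that $f^{-1}(P\setminus\{c\})$ has exactly one connected component, mapped to $P\setminus\{c\}$ as an infinite-degree covering. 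By the same calculation as in the previous paragraph this component is $\Gamma_n$-disjoint, so it coincides with the single puzzle piece of level $n$ that contains a left half plane, namely $Y^{(n)}_*$.

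The only mildly delicate step is verifying simple-connectedness of the non-singular pieces $Y^{(n-1)}_j$ used in the univalence argument; I would handle this by induction on $n$, noting that $\Gamma$ is a finite tree of rays (periodic rays meeting at their common landing points) so $\C\setminus\Gamma$ consists of simply connected pieces, and pulling back by the covering $f_c$ preserves simple connectedness of each lift over any simply connected base that avoids $c$.
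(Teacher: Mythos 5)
Your argument is correct and follows essentially the same route as the paper's (much terser) proof: nesting comes from $\Gamma_n\subset\Gamma_{n+1}$, and the Markov property from $f^{-1}(\Gamma_{n-1})\subset\Gamma_n$, which is exactly what the paper invokes. The covering-space details you supply — each piece being a full component of the preimage of a piece of the previous level, univalence over simply connected pieces avoiding $c$, the universal-cover computation for the branching piece, and the inductive proof of simple connectedness — are left implicit in the paper and are sound as you present them; the only cosmetic point is that the step \lq\lq forcing $Y^{(n)}_i=Y^{(n)}_*$\rq\rq\ in your second bullet quietly uses the connectivity of $f^{-1}(P)$, which you only establish in the third bullet, so the two paragraphs should be read together.
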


\begin{proof}
Boundaries of puzzle pieces are formed by dynamic rays together with their landing points, hence non-nested puzzle pieces do not intersect.
Also, the boundaries of puzzle pieces of level $n+1$ contain all  preimages of the boundaries of puzzle pieces of level $n$, so the image of a puzzle piece cannot intersect two different puzzle pieces.
\end{proof}

\subsection{Parapuzzles and combinatorial equivalence}\label{Parapuzzles}  

 Let $c_0$ be a {non-escaping} parameter {which is not the landing point of a preperiodic ray.} Let  $\Gamma$ be a forward invariant graph formed by finitely many periodic rays together with their landing points as in Section~\ref{Puzzles 1},  and  $\AA_0 =\{\s_i\}_{i=1}^q$ be the set of addresses of the  dynamic rays in $\Gamma$. Define

 $$\AA_N:=\{\s\in\SS:\sigma^j(\s)\in\AA_0 \text{ for some } j\leq N\}.$$ 
The boundaries of puzzle pieces of level $N$ consist exactly of dynamic rays with addresses in $\AA_N$; these rays are either periodic or preperiodic with preperiod $k\leq N$. Hence, by Propositions~\ref{Parabolic wakes} and \ref{Misiurewicz wakes}, the boundaries of puzzle pieces of level $N$ move holomorphically in $\C\setminus \bigcup_{\s\in\AA_N} \ov{G_\s}$.

A \emph{parapuzzle piece} of level $N$ for $\Gamma$ is a region in parameter space over which the boundaries of puzzle pieces of level $N$ move holomorphically.
 
Observe that the boundary of  a parapuzzle piece of level $N$ consists of countably many parameter rays with addresses in $\AA_N$ together with their landing points, hence of either parameter rays landing at Misiurewicz parameters or  characteristic  parameter rays landing at parabolic parameters (see \cite{Sc0} for landing of parameter rays with periodic and preperiodic addresses). Also, parapuzzle pieces of different levels are either disjoint or contained one inside the other.

Two exponential  maps $f_c,f_\cprime$ are \emph{combinatorially equivalent} up to level $N$ (with respect to the parapuzzle induced by $\Gamma$) if they belong to the same parapuzzle piece up to level $N$. If two maps are combinatorially equivalent as in Section~\ref{Dynamic rays}, by Theorems~\ref{Parabolic wakes} and \ref{Misiurewicz wakes} they  belong to the same parapuzzle piece for all levels independently of the choice of $\Gamma$, {as long as $\Gamma$ is allowable for both parameters in question. The condition of allowability is automatically satisfied if both parameters are neither escaping nor preperiodic}.  For simplicity in this paper, we will only be concerned with non-escaping parameters.

Two puzzle pieces $Y^{(n)}_\ell$ for $f_c$ and  ${Y^{(n)}_{\ell}}'$ for $f_\cprime$ are  \emph{equivalent} if they are bounded by  dynamic rays with the same addresses which land together in the same pattern (see again Definition~\ref{Combinatorial equivalence def} ). If two maps are combinatorially equivalent up to level $N$, for each $n<N$ and for each puzzle piece  $Y^{(n)}_\ell$ for $f_c$ there is exactly one equivalent puzzle piece  ${Y_\ell^{(n)}}'$ for $f_\cprime$. Equivalent puzzle pieces will be labelled with the same label.

\begin{prop}[Combinatorial equivalence]\label{Combinatorial equivalence}
If two exponential maps $f_c,f_\cprime$ are combinatorially equivalent up to level $N$, their singular values  belong to equivalent puzzle pieces up to level $N-1$. 
\end{prop}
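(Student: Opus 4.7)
The plan is to connect $c$ and $c'$ by a path inside their common parapuzzle piece of level $N$ and follow the puzzle piece containing the singular value along that path. By the definition of combinatorial equivalence up to level $N$, both $c$ and $c'$ lie in the same parapuzzle piece $P$ of level $N$; since $P$ is open and connected, pick a continuous path $\gamma\colon[0,1]\to P$ with $\gamma(0)=c$ and $\gamma(1)=c'$. By the definition of parapuzzle piece (combined with Propositions~\ref{Parabolic wakes} and~\ref{Misiurewicz wakes}), the dynamic rays with addresses in $\AA_N$ together with their landing points move holomorphically over $P$. In particular, since the boundaries of all puzzle pieces of level $n\le N-1$ consist of dynamic rays of addresses in $\AA_n\subset\AA_{N-1}\subset\AA_N$, these boundaries move holomorphically along $\gamma$.

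The key step is to show that for every $n\le N-1$ the singular value $\gamma(t)$ never meets the boundary of its puzzle piece $Y^{(n)}(t)$ of level $n$ for $f_{\gamma(t)}$. Suppose to the contrary that $\gamma(t_0)=c_0$ lies on a dynamic ray $g^{c_0}_\s$ with $\s\in\AA_{N-1}$, or on its landing point. In the first case Theorem~\ref{Parameter rays} forces $c_0\in G_\s$; in the second case $c_0$ is either a parabolic or a Misiurewicz parameter, and hence by Propositions~\ref{Parabolic wakes} and~\ref{Misiurewicz wakes} (see also \cite{Sc0}) is the landing point of a parameter ray with address in $\AA_{N-1}$. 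Both alternatives place $c_0$ on $\partial P$, contradicting $c_0=\gamma(t_0)\in P$.

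Consequently, as $t$ runs from $0$ to $1$, the piece $Y^{(n)}(t)$ is obtained from $Y^{(n)}(0)$ by transporting its bounding rays and their landing points via the holomorphic motion, which preserves both the addresses of the bounding rays and the pattern of common landings. Hence $Y^{(n)}(0)$ and $Y^{(n)}(1)$ are equivalent puzzle pieces in the sense defined in Section~\ref{Parapuzzles}, and this is exactly the statement of the proposition.

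The main technical point to keep in mind is the case where $\gamma(t_0)$ does not sit on the open part of a dynamic ray but rather at its landing point; this is what forces the appeal to the classification of landing points of parameter rays with periodic and preperiodic addresses, guaranteeing that such points belong to $\partial P$ rather than to its interior. The rest of the argument is purely topological, reducing the proposition to the design principle of the parapuzzle: a parapuzzle piece of level $N$ is precisely the region in parameter space over which the combinatorial type of the level $N-1$ puzzle piece of the singular value is constant.
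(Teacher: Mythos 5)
Your route is genuinely different from the paper's. The paper argues by contradiction entirely in the dynamical planes: if $c$ and $\cprime$ sat in non-equivalent pieces of some level $n\le N-1$, there would be a pair of co-landing rays enclosing $c$ but not $\cprime$, and Lemma~\ref{Splitting} then forces the preimage rays (addresses in $\AA_{n+1}\subset\AA_N$) to land together in different patterns for the two maps, contradicting equivalence at level $n+1$. That argument needs no parameter-space path and no continuity of holomorphic motions; it shows that the level-$(n+1)$ landing pattern already records on which side of each level-$n$ ray pair the singular value lies. Your argument instead moves in parameter space, and it can be completed, but the step you dismiss as \lq\lq purely topological\rq\rq\ is exactly where the content lies and, as written, is a gap: knowing that $\gamma(t)$ never meets the moving graph does not by itself prevent the combinatorial label of the component containing $\gamma(t)$ from jumping, since each boundary point moves continuously only individually and the graph is an unbounded, non-compact union of rays. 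To get local constancy you need a uniform statement --- e.g.\ joint continuity of the motion up to the closure ($\lambda$-lemma of \cite{MSS}) combined with a Jordan-curve argument on the sphere, or an extension of the motion to an ambient motion of $\hat{\C}$ depending continuously on the parameter. Note that Theorem~\ref{Lambda Lemma} as stated in the paper gives a quasiconformal extension for each \emph{fixed} parameter, with no continuity in the parameter, so it cannot be quoted directly for this.

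A second, smaller imprecision is the landing-point case. If $\gamma(t_0)=c_0$ coincides with the landing point of a ray of the graph, that point is a periodic or preperiodic point of $f_{c_0}$; the singular value of an exponential map is never periodic (since $f_c^n(c)=c$ would force $e^w=0$ for some $w$), and a finite singular orbit is incompatible with a parabolic cycle, so the only possibility is that $c_0$ is Misiurewicz. The parabolic alternative you list does not occur, and what you actually need is the correspondence of \cite{LSV} (see also \cite{Be}): the parameter rays landing at a Misiurewicz parameter have exactly the preperiodic addresses of the dynamic rays landing at its singular value, which places $c_0\in\ov{G_\s}$ with $\s\in\AA_{N-1}\subset\AA_N$, hence outside $P$. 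Your case of $c_0$ on the open part of a ray is fine via Theorem~\ref{Parameter rays}, because the addresses involved are bounded and so have minimal potential $0$. With these two points repaired your proof works, but the paper's single application of Lemma~\ref{Splitting} is both shorter and avoids all continuity issues.
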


\begin{proof}
Suppose by contradiction that there is a level $n\leq N-1$ such that $c,\cprime$ belong to non-equivalent puzzle pieces. Then there are two addresses $\s,\s'$ such that the curve $\ov{g^c_\s\cup g^c_{\s'}}$ encloses $c$, but the curve $\ov{g^\cprime_\s\cup g^\cprime_{\s'}}$ does not enclose $\cprime$.
By Proposition~\ref{Splitting}, the puzzle pieces of level $n+1$ which contain preimages of $g^c_\s, g^c_{\s'}$ on their boundary cannot be equivalent to the puzzle pieces of level $n+1$ which contain preimages of $g^\cprime_\s, g^\cprime_{\s'}$; this  contradicts combinatorial equivalence at level $n+1$.
\end{proof}

{\begin{cor}
If two exponential maps $f_c,f_\cprime$ are combinatorially equivalent then for any $n,j\in\N$, $\fc^{j}(c)\in Y^{(n)}_\ell$ if and only if  $\fcp^{j}(c')\in {Y^{(n)}}'_\ell$.
\end{cor}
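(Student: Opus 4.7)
The plan is to prove the statement by induction on $j$, using \propref{Combinatorial equivalence} to handle the base case and \lemref{Puzzle dynamics} together with the combinatorial nature of the shift $\sigma$ on addresses to handle the inductive step. Throughout, I will use the standing assumptions of this section: both $c$ and $c'$ are non-escaping and are not landing points of preperiodic rays, so no forward iterate of the singular value ever meets $\Gamma_n$ for any $n$, and hence $f_c^j(c)$ lies in the interior of some puzzle piece at every level $n$ (and similarly for $c'$).

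For the base case $j=0$, the statement says $c \in Y^{(n)}_\ell$ iff $c' \in {Y^{(n)}_\ell}'$. Since $f_c$ and $f_{c'}$ are combinatorially equivalent (meaning equivalent up to every level $N$), \propref{Combinatorial equivalence} gives this directly. For the inductive step, assume the statement holds for $j-1$ and fix $n,\ell$. If $f_c^j(c) \in Y^{(n)}_\ell$, then $f_c^{j-1}(c)$ lies in some uniquely determined puzzle piece $Y^{(n+1)}_k$; by the inductive hypothesis, $f_{c'}^{j-1}(c') \in {Y^{(n+1)}_k}'$. The key step is then to show that the combinatorial assignment $k \mapsto \ell$ provided by \lemref{Puzzle dynamics} is the same for both parameters, i.e.\ $f_c(Y^{(n+1)}_k) = Y^{(n)}_\ell$ implies $f_{c'}({Y^{(n+1)}_k}') = {Y^{(n)}_\ell}'$. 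Granted this, $f_{c'}^j(c') \in {Y^{(n)}_\ell}'$, and the converse direction is symmetric.

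To establish this combinatorial invariance, observe that $\partial Y^{(n+1)}_k$ consists of arcs of dynamic rays whose addresses lie in $\mathcal{A}_{n+1}$. By \thmref{ExistenceRays}(a), $f_c$ sends a ray of address $\mathbf{s}$ to the ray of address $\sigma\mathbf{s}$, so the bounding addresses of $f_c(Y^{(n+1)}_k) = Y^{(n)}_\ell$ are obtained from those of $Y^{(n+1)}_k$ purely combinatorially by applying $\sigma$, independently of $c$. Since equivalent puzzle pieces are by definition bounded by rays with the same addresses landing together in the same pattern, the same shift produces ${Y^{(n)}_\ell}'$ as $f_{c'}({Y^{(n+1)}_k}')$. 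The main potential obstacle is ensuring that $f_c^{j-1}(c)$ really lies inside some puzzle piece rather than on a boundary, but this is precisely guaranteed by our standing assumption that $c$ is neither escaping nor preperiodic (together with the forward invariance of $\Gamma$), so no finite iterate of the singular value can lie on any $\Gamma_n$.
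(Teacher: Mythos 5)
Your argument is correct and is essentially the paper's own proof: the paper applies \propref{Combinatorial equivalence} at level $n+j$ and observes that, by the definition of equivalent puzzle pieces, $f^j$ carries equivalent pieces of level $n+j$ to equivalent pieces of level $n$ — which is exactly your inductive step (the shift-of-addresses observation via \thmref{ExistenceRays}(a) and \lemref{Puzzle dynamics}) iterated $j$ times. Your induction merely makes explicit what the paper treats as immediate from the definitions.
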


\begin{proof}
Fix $n,j\in\N$. By Proposition~\ref{Combinatorial equivalence} $c$ and $c'$ belong to equivalent puzzle pieces of level $n+j$, and since  by definition  two such puzzle pieces are mapped by $f^j$ to two puzzle pieces which are also equivalent, the claim follows.
\end{proof}

}

\section{Quasiconformal Rigidity: proof of Theorem B}\label{qc rigidity}

In  this section we prove Theorem B. 
We first construct a quasiconformal map $\psi_N$ between $f_c$ and $f_\cprime$, which is a conjugacy on the boundary of puzzle pieces up to a finite level $N$; then $\psi_N$  is used as the initial map in a  lifting procedure to  obtain  a quasiconformal map $\Psi$ which is a conjugacy on $\PP(f)$; finally,  $\Psi$ is used in a new lifting procedure to obtain a quasiconformal map   $\Phi$  which is a conjugacy on all  preimages of $\PP(f)$. In fact, because the latter are dense, by continuity $\Phi$ is a  quasiconformal conjugacy on the entire plane.

{Recall that $\AA_k$ is the set of addresses of the rays which form the boundaries of puzzle pieces of level $k$.}

\begin{prop}[Initial quasiconformal map]\label{Initial quasiconformal map}
Let $c,\cprime$ be non-escaping and let $f_c$, $f_\cprime$ be combinatorially equivalent up to  level $N$ for some $N\in\N$.  Then there exists a quasiconformal  map $\psi_N: \C\ra\C$ such that
\begin{itemize}
\item $\psi_N(c)=\cprime$ and $\psi_N(\fc(c))=\fcp(\cprime)$ ;
\item $\psi_N$ is a conjugacy on the dynamic rays $g^c_\s$ with {$\s\in\AA_N$}.
\end{itemize} 
\end{prop}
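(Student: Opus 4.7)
The plan is to apply the Bers-Royden extension (Theorem~\ref{Lambda Lemma}) to a holomorphic motion that simultaneously moves the rays of $\AA_N$ and the two marked orbit points $c$ and $\fc(c)$. I would work inside the parapuzzle piece $\PP$ of level $N+1$ containing both $c$ and $\cprime$---the natural ambient open set under combinatorial equivalence in the sense of Definition~\ref{Combinatorial equivalence def}---over which each ray $g^\lambda_\s$ with $\s\in\AA_N$ moves holomorphically together with its landing point (Propositions~\ref{Parabolic wakes} and~\ref{Misiurewicz wakes}, combined with Theorem~\ref{ExistenceRays}(c)). Passing if necessary to a simply connected subdomain $\Lambda\subset\PP$ containing $c$ and $\cprime$, I will have the hypotheses of Theorem~\ref{Lambda Lemma}.

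On the dynamical plane of $f_c$, set $X:=\bigcup_{\s\in\AA_N}\overline{g^c_\s}\cup\{c,\fc(c)\}$ and define $h_\lambda:X\to\C$ over $\Lambda$ by the ray motion on the first piece, $h_\lambda(c):=\lambda$, and $h_\lambda(\fc(c)):=f_\lambda(\lambda)$. Each trajectory is manifestly holomorphic in $\lambda$, so the only thing to check is that $h_\lambda$ is injective at each slice. Indeed, $\lambda$ cannot lie on any ray $g^\lambda_\s$ with $\s\in\AA_N$, since by Theorem~\ref{Parameter rays} this would put $\lambda$ on the parameter ray $G_\s$ that bounds $\PP$; $f_\lambda(\lambda)$ cannot lie on such a ray either, since this would put $\lambda$ on a dynamic ray with address in $\AA_{N+1}$, hence on a parameter ray bounding the level-$(N+1)$ parapuzzle piece $\PP$; and $f_\lambda(\lambda)=\lambda$ would force $e^\lambda=0$, impossible. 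Bers-Royden then extends this motion to a holomorphic motion of $\hat{\C}$ over $\Lambda$ by quasiconformal maps, and I take $\psi_N$ to be the slice at $\lambda=\cprime$. By construction $\psi_N$ is quasiconformal, $\psi_N(c)=\cprime$, $\psi_N(\fc(c))=\fcp(\cprime)$, and $\psi_N(g^c_\s(t))=g^\cprime_\s(t)$ for every $\s\in\AA_N$, so $\psi_N$ conjugates $f_c$ to $f_\cprime$ on these rays.

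The hard part will be justifying the passage from the stated hypothesis to the level-$(N+1)$ parapuzzle piece used above: under combinatorial equivalence only at level $N$, the iterate $f_\lambda(\lambda)$ can jump between level-$N$ puzzle pieces as $\lambda$ crosses parameter rays with addresses in $\AA_{N+1}\setminus\AA_N$ inside the level-$N$ parapuzzle piece, and this breaks injectivity of $h_\lambda$ at the point $\fc(c)$. Under combinatorial equivalence in the full sense of Definition~\ref{Combinatorial equivalence def}---which is the standing assumption of Theorem B and the setting in which Proposition~\ref{Initial quasiconformal map} is invoked---Propositions~\ref{Parabolic wakes} and~\ref{Misiurewicz wakes} place $c$ and $\cprime$ in a common parapuzzle piece at every level, and the construction goes through verbatim; equivalently, the hypothesis of Proposition~\ref{Initial quasiconformal map} should be read as equivalence up to some level strictly greater than $N$.
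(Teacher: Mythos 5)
Your proposal follows essentially the same route as the paper: a holomorphic motion of the rays with addresses in $\AA_N$ together with the marked points $c$ and $f_c(c)$ over a parapuzzle piece containing $c$ and $c'$, extended by the Bers--Royden theorem and evaluated at the slice $\lambda=c'$. The only real difference is your explicit treatment of injectivity of the motion at the point $f_\lambda(\lambda)$, which leads you to work over the level-$(N+1)$ parapuzzle piece; the paper works over the level-$N$ piece and passes over this point silently, and your fix is harmless since in the application (Theorem B) the two maps are fully combinatorially equivalent, so they lie in a common parapuzzle piece of every level.
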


\begin{proof}
As $\fc, \fcp$ are combinatorially equivalent up to level $N$, there is a  parapuzzle piece $\Lambda$ of level $N$ which contains both $c$ and $\cprime$.  
By Propositions~\ref{Parabolic wakes} and \ref{Misiurewicz wakes} together with Theorem~\ref{ExistenceRays}, the map $h_{\ctilde}:=g_\s^\ctilde\circ(g_\s^c)^{-1}$ defines a holomorphic motion  over $\Lambda$ of the dynamic rays $g^c_\s$ for  $\s\in\AA_k$ with $k\leq N$.  By definition,  $h_{\ctilde}$ is a conjugacy on the dynamic rays $g^c_\s$ with $\s\in\AA_k$ for all $k\leq N$. 
The singular value $c$ itself also moves holomorphically in $\Lambda$, as well as its first image $\fc(c)$, so $h_{\ctilde}$  can be defined   $h_{\ctilde}(c)=\cprime$ and $h_{\ctilde}(\fc(c))=\fcp(\cprime)$.
By the Bers-Royden extension (Theorem~\ref{Lambda Lemma}), $h_{\ctilde}$ can be extended to a quasiconformal  map $\psi^{\ctilde}:\ \C\rightarrow\C$ for any $\ctilde\in\Lambda$. In particular, $\psi_N:=\psi^{c'}$  is a conjugacy on the boundaries of the infinitely many puzzle pieces of level $N$ and $\psi_N(c)=c'$.
\end{proof}

From now on, let $\Gamma$ be the forward invariant graph for which $c$ is combinatorially non-recurrent, and consider the puzzle induced by $\Gamma$.

\begin{prop}[Conjugacy on $\PPfc$]\label{Conjugacy on P}
Let $c,\cprime$ be non-escaping and  let $\fc,\fcp$ be two combinatorially equivalent maps, such that $c$ is combinatorially non-recurrent. Then there exists a quasiconformal  map $\psi:\C\ra\C$ which is a conjugacy on $\PPfc$. {Moreover, $\psi$ is  a  conjugacy between $\fc$ and $\fcp$ on the boundaries of the infinitely many puzzle pieces of level $N$, and maps $c$ to $\cprime$.}
\end{prop}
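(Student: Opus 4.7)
The plan is to produce $\psi$ as a limit of quasiconformal maps obtained by iteratively lifting $\psi_N$ through the dynamics. First, using combinatorial non-recurrence of $c$, I enlarge $N$ from Proposition~\ref{Initial quasiconformal map} so that the singular puzzle piece of level $N$ for $\fc$ is disjoint from $\PPfc$. The corollary after Proposition~\ref{Combinatorial equivalence} transports this disjointness to $c'$, so that $\PPfcp$ is disjoint from the equivalent singular piece for $\fcp$ and $\fcp$ is itself combinatorially non-recurrent; in particular, Corollary~A applies to both postsingular sets.

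Second, I define $\psi_n$ for $n>N$ inductively via the lifting identity $\fcp\circ\psi_n=\psi_{n-1}\circ\fc$. On each non-branching puzzle piece $Y^{(n)}_j$ the map $\fc$ is univalent onto its image, as is $\fcp$ on the equivalent piece ${Y^{(n)}_j}'$, so $\psi_n:=(\fcp|_{{Y^{(n)}_j}'})^{-1}\circ\psi_{n-1}\circ\fc$ is univalently well defined with the same dilatation as $\psi_{n-1}$; in particular, on the singular piece containing $c$ this preserves $\psi_n(c)=c'$ by induction, since $\psi_{n-1}(\fc(c))=\fcp(c')$. On the branching piece $Y^{(n)}_*$, the map $\fc$ is a universal covering onto the singular piece of level $n-1$ minus $c$; since $Y^{(n)}_*$ is simply connected, the lift through $\fcp$ exists, and I pin it down by matching $\psi_{n-1}$ on the rays bounding $Y^{(n)}_*$, which are already conjugated by $\psi_N$ via the holomorphic motion on the relevant parapuzzle piece. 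A short check shows that these local definitions glue continuously on shared boundaries and that $\psi_n$ agrees with $\psi_{n-1}$ (hence with $\psi_N$) on the boundaries of puzzle pieces of level $N$, because the conjugacy on those boundaries is already built into $\psi_N$.

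Third, the sequence $\{\psi_n\}$ is uniformly $K$-quasiconformal, so Lemma~\ref{Precompactness} yields a subsequence $\psi_{n_k}$ converging locally uniformly to a $K$-quasiconformal map $\psi$. By construction $\psi$ inherits the conjugacy property on the countably many boundaries of puzzle pieces of level $N$, and $\psi(c)=c'$.

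Finally, to verify the conjugacy on $\PPfc$, fix $z\in\PPfc$ and let $Y^{(n)}_\ell\ni z$. By our choice of $N$, every such $Y^{(n)}_\ell$ is non-branching, and the entire forward orbit of $z$ stays in non-branching pieces. By Corollary~A, $\PPfc$ is hyperbolic, so Proposition~\ref{Local expansivity} applied to pullbacks along the orbit of $z$ forces the pieces $Y^{(n)}_\ell$ to shrink to $\{z\}$ locally; the analogous statement for $\fcp$ and the equivalent pieces ${Y^{(n)}_\ell}'$ gives the corresponding shrinking there. Since $\psi_n(Y^{(n)}_\ell)\subseteq {Y^{(n)}_\ell}'$, the values $\psi_n(z)$ are eventually trapped in a shrinking nest and converge to a unique limit, which must equal $\psi(z)$. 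Passing to the limit in $\fcp\circ\psi_{n_k}=\psi_{n_k-1}\circ\fc$ at $z$ then produces $\fcp(\psi(z))=\psi(\fc(z))$. The main obstacle is the lift on the branching piece — where one must select the correct sheet of an infinite-degree cover and argue global quasiconformality — together with the shrinking argument for puzzle pieces near postsingular points, since puzzle pieces in the exponential family are typically unbounded and one needs to combine Theorem~A with the local expansivity estimates to get bona fide shrinking near each $z\in\PPfc$.
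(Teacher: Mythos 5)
Your construction of the lifted maps is workable in principle (it is essentially the lifting procedure the paper itself uses later, in the proof of Theorem B), but the step on which your verification of the conjugacy on $\PP(\fc)$ rests is a genuine gap: you claim that Corollary A together with Proposition~\ref{Local expansivity} forces the puzzle pieces $Y^{(n)}_\ell$ containing a point $z\in\PP(\fc)$, and their equivalent pieces for $\fcp$, to shrink to $\{z\}$. Puzzle pieces of the exponential family are unbounded at every level (their boundaries are unions of dynamic rays), so their diameters never tend to $0$, and Proposition~\ref{Local expansivity} says nothing about them: it controls connected components of $f^{-n}(D_\delta(z_0))$ which meet the hyperbolic set, i.e.\ pullbacks of small Euclidean disks, not pullbacks of the unbounded level-$0$ pieces. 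Even the weaker statement you actually need — that $\bigcap_n\ov{{Y^{(n)}_\ell}'}$ is a single point, so that the values $\psi_n(z)$ are trapped in a shrinking nest — is a triviality-of-fibers statement, and you need it for $\fcp$, about which only combinatorial information is available; this is essentially of the same depth as the rigidity the paper is ultimately proving (Theorem C), so it cannot be invoked here. Without it you only have subsequential convergence, and you cannot pass to the limit in $\fcp\circ\psi_{n_k}=\psi_{n_k-1}\circ\fc$ at points of $\PP(\fc)$.

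The paper's proof avoids both of your obstacles. It redefines $\psi_{n+1}=\fcp^{-1}\circ\psi_n\circ\fc$ only on puzzle pieces of level $n+1$ contained in $P^{(n)}$ (the union of level-$n$ pieces meeting $\PP(\fc)$), and keeps $\psi_{n+1}=\psi_n$ elsewhere. Combinatorial non-recurrence guarantees that the image piece of any piece being redefined does not contain $c$, and by Proposition~\ref{Combinatorial equivalence} its equivalent piece does not contain $\cprime$, so only univalent inverse branches are ever needed; the branching piece and covering-space lifts never enter at this stage. Because the sequence stabilizes outside $Q=\bigcap_n\ov{P^{(n)}}$, and $Q$ has zero area by Corollary~\ref{zero area} (this is where Theorem A actually enters), all limit functions coincide almost everywhere, hence the limit $\psi$ is unique and the whole sequence converges; the conjugacy on $\PP(\fc)\subset\bigcap_n P^{(n)}$ then follows by passing to the limit in the finite-level identity $\fcp\circ\psi_{n+1}=\psi_n\circ\fc$ on $P^{(n+1)}$, with no shrinking of puzzle pieces required. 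If you wish to keep your global-lifting construction, you must replace the shrinking claim by an argument of this type: uniqueness of the limit (e.g.\ via a measure-zero or stabilization argument) combined with the finite-level functional equation on a set containing $\PP(\fc)$.
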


\begin{proof}
For each level $n$, denote by $\Pn$  the union of the puzzle pieces of level $n$ intersecting $\PP(f_c)$. 
Observe that $\Pnp\subset \Pn$, and that by forward invariance of $\PPfc$ if a puzzle piece $Y\subset\Pnp$, then $f_c(Y)\subset{\Pn}$.
As $f_c$ is combinatorially non-recurrent, there is some level $N$ such that $c\notin P^{(N)}$.
Let $\psi_N$ be as in Proposition~\ref{Initial quasiconformal map} and  define inductively the sequence $\{\psi_n\}_{n\geq N}$ as:

\begin{align}\label{defpsi}
\psi_{n+1}&=\psi_n  &\text{ on }\C\setminus P^{(n)};  \\
\psi_{n+1}&=f^{-1}_\cprime\circ \psi_n \circ f_c &\text{ on $\ov{\Yl}$, for any $\Yl=\Yl^{(n+1)}\subset P^{(n)}$.}\label{psi}
\end{align}

Observe that we are redefining $\psi_n$ on all puzzle pieces of level $n+1$ which are contained in $\Pn$, not only on the ones which are also contained in $\Pnp$. This is needed in order to ensure continuity. The branch of the inverse $f^{-1}_\cprime$ is defined so that  $ f^{-1}_\cprime\circ \psi_n \circ f_c (\ov{\Yl})= \ov{\Yl}'$, where ${\Yl}'$ is  the puzzle piece for $f_\cprime$ which is equivalent to $\Yl$. We show  by induction that this is possible while we prove the properties of  $\{\psi_n\}$.

Now we show that the  maps $\{\psin\}_{n\geq N}$ are well defined and satisfy the following properties:
\begin{enumerate}
\item[1.]  $\psinp=\gscp\circ(\gsc)^{-1}$ on any $\gsc\in\partial Y^{(j+1)}$ such that $Y^{(j+1)}\subset P^{(j)}$ for some $j\leq n$.
\item[2.] $\psinp$ is $K$-quasiconformal.       
\item[3.] $\fcp\circ\psinp=\psin\circ\fc$ on $\Pnp$.
\end{enumerate}
{Since $\psinp$ is only modified outside $\Pn$,  for each $n$ the map $\psinp$ remains a  conjugacy on the boundaries of the infinitely many puzzle pieces of level $N$ and moreover  $\psinp(c)=c'$, hence these properties are inherited by any limiting map.}  
Observe that $\psi_N$ satisfies all of the properties above for some quasiconformality constant $K$.  We show by  induction that they are satisfied for all $n>N$.

We first show that $\psinp$ is well defined on every $\Yl^{(n+1)}\subset P^{(n)}$. For any puzzle piece $\Yl^{(n+1)}$, its  image is $\fc(\ov{\Yl^{(n+1)}})=\ov{\Ylt^{(n)}}$ for some puzzle piece $\Ylt^{(n)}$ of level $n$.
 Let $\Yltp^{(n)}$ be the  puzzle piece which is equivalent to $\Ylt^{(n)}$.
The puzzle piece $\ov{\Ylt^{(n)}}$   does not contain $c$ by combinatorial non-recurrence, so $\ov{\Yltp^{(n)}}$ does not  contain $c'$ by Proposition~\ref{Combinatorial equivalence}. In particular inverse branches for $\fcp$ are well defined and univalent in a neighborhood of $\ov{\Yltp^{(n)}}$, and there is a univalent branch $\fcp^{-1}:\Yltp^{(n)}\ra\Ylp^{(n+1)}$. So  $\psi_{n+1}$ as in (\ref{psi}) is well defined using the univalent  branch of the inverse described here for any puzzle piece under consideration.

We now show Property $1$. Let $g^c_\st$ be a dynamic ray such that $g^c_\st\in\partial Y^{(j+1)}$ for some  $Y^{(j+1)}\subset P^{(j)}$ and some $j\leq n$. Then $g^c_{\sigma\st}\in\partial Y^{(j)}$ for some  $Y^{(j)}\subset P^{(j-1)}$. By direct computation and by the induction hypothesis, on $g^c_\st$, 
\begin{align}\label{Calculation}
\psinp|_{g^c_\st} &= \fcp^{-1}\circ\psin\circ\fc|_{g^c_\st}=\\
                &= \fcp^{-1}\circ g^\cprime_{\sigma\st}\circ (g^c_{\sigma\st})^{-1}\circ\fc|_{g^c_\st}\\
                &=g^{\cprime}_\st\circ (g^{c}_\st)^{-1}|_{g^c_\st}.
\end{align}
 
(The second equality follows from Property 1. in the induction hypothesis, while the last equality comes from the functional equation in Theorem~\ref{ExistenceRays} and the choice of branch of $\fcp^{-1}$). 
In particular, by the induction hypothesis $\psinp=\psin$ on any   $g^c_\st\in\partial Y^{(j+1)}$ if  $Y^{(j+1)}\subset P^{(j)}$ for $j\leq n-1$.
 
Property 1. together with the definition of $\psinp$ implies  continuity: on rays belonging to $\partial P^{(n)}$ we obtain that $\psinp=\psin$, while on any ray $g^c_\s$ in the boundary of two puzzle pieces of level $n+1$ the new defined functions match because $\psinp$ coincides with   $\gscp\circ(\gsc)^{-1}$.  So $\psinp$ is a homeomorphism; to   show that it  is $K$-quasiconformal, note that whenever $\psinp$ is redefined, it is done by pre- and post-composing with a  conformal map, so the dilatation remains $K$ and $\psinp$ is  $K$-quasiconformal (as it is a homeomorphism which is  $K$-quasiconformal outside a countable number of smooth curves). This proves   Property $2.$.  Property $3.$ follows directly from the definition of $\psinp$ because $P^{(n+1)}\subset P^{(n)}$.
 
 
By Property $2.$ the functions $\psi_n$  are uniformly quasiconformal, can be extended as to fix infinity and by Property $3.$ they all coincide on $\partial P^{(N)}$, so by Lemma~\ref{Precompactness} there exists a  $K$-quasiconformal limit function $\psi: \hat{\C}\ra\hat{\C}$ fixing infinity. {All limit functions coincide outside the set $Q:=\cap_n \ov{P^{(n)}}$, which is forward invariant, closed and omits a neighborhood of $c$ hence  has zero area by Corollary~\ref{zero area}. So all limit functions coincide outside a set of measure zero hence  the limit is unique.} By Property $3.$, the limit map $\psi$ is a conjugacy between $f_c$ and $f_\cprime$ on $\underset{n\geq N}{\bigcap} P^{(n)} \supset\PP(f)$.
{Since for each $n$ $\psi_n$ maps a puzzle piece in $P^{(n)}$ to its equivalent puzzle piece for $\fcp$, by Proposition~\ref{Combinatorial equivalence} the limiting conjugacy maps $\PP(\fc)$ to $\PP(\fcp)$.}
\end{proof}

We conclude the proof of Theorem B by lifting $\psi$ to a continuous map which is a  conjugacy on the preimages of the postsingular set, and then  use the fact that the latter are dense to obtain by continuity a conjugacy on the entire plane.
The next theorem is one of the fundamental facts in the theory of covering spaces (see e.g. \cite{Ha}, Proposition 1.33).

\begin{thm}[Lifting]\label{Lifting}
Let $X,Y,Z$ be topological spaces with base points $x,y,z$ respectively. If $X$ is a covering of $Z$ via a map $h$ such that $h(x)=z$, and $f: Y\ra Z$ is a continuous map with $f(y)=z$ and $Y$ is simply connected, then there exists a unique continuous lift $\tilde{f}: Y\ra X$ with $\tilde{f}(y)=x$.
\end{thm}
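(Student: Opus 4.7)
The plan is to construct $\tilde{f}$ pointwise by a path-lifting procedure, exploiting the fact that $Y$ is simply connected (in particular path-connected). Given $y'\in Y$, I would choose any path $\gamma:[0,1]\ra Y$ from $y$ to $y'$, form the path $f\circ\gamma$ in $Z$ starting at $z$, and lift it to a unique path $\tilde{\gamma}:[0,1]\ra X$ starting at $x$ using the standard path-lifting property of the covering $h$ (which itself follows from compactness of $[0,1]$, subdividing so each segment lies in an evenly covered open set, and lifting piecewise by choosing the sheet containing the previous endpoint). Then I set $\tilde{f}(y'):=\tilde{\gamma}(1)$.

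The central step is to verify that this definition is independent of the choice of $\gamma$, and this is where simple connectedness is essential. If $\gamma_0,\gamma_1$ are two paths from $y$ to $y'$, by hypothesis there is a homotopy $H:[0,1]^2\ra Y$ between them rel endpoints. The composite $f\circ H$ is then a homotopy in $Z$, and by the homotopy lifting property for covering maps (again proved by subdividing the square into small patches each landing in an evenly covered neighborhood, then lifting inductively) this lifts to $\tilde{H}:[0,1]^2\ra X$ starting at $x$. On the right edge $\{1\}\times[0,1]$, the map $\tilde{H}$ takes values in the discrete fiber $h^{-1}(f(y'))$, so by connectedness it is constant; this forces $\tilde{\gamma}_0(1)=\tilde{\gamma}_1(1)$ and makes $\tilde{f}$ well defined.

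For continuity at a point $y'\in Y$, I would pick an evenly covered neighborhood $U\ni f(y')$ with $h^{-1}(U)=\bigsqcup_\alpha V_\alpha$, identify the sheet $V_{\alpha_0}$ containing $\tilde{f}(y')$, and note that on the path-component of $f^{-1}(U)$ containing $y'$ the map $\tilde{f}$ agrees with the continuous composition $(h|_{V_{\alpha_0}})^{-1}\circ f$ (one checks this by concatenating a path from $y$ to $y'$ with a short path inside $f^{-1}(U)$ to any nearby point). Uniqueness is immediate from the same local picture: the set of points where any two lifts agree is open (sheets are disjoint neighborhoods) and closed (same reason, with the roles swapped) in $Y$, and nonempty since both lifts send $y$ to $x$, hence equals all of $Y$ by connectedness.

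The main obstacle is really the well-definedness step, which is the only place where the hypothesis on $Y$ enters in a non-trivial way and which rests on the homotopy lifting property. I would also want to implicitly assume that $Y$ is locally path-connected so that ``path-component of $f^{-1}(U)$'' behaves sensibly in the continuity argument; in the applications here $Y$ will be an open subset of $\C$, so this is automatic and there is nothing to worry about.
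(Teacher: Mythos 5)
The paper gives no proof of this statement at all---it simply cites Hatcher, Proposition 1.33---and your argument is precisely the standard proof of that cited result: define $\tilde f$ by lifting $f\circ\gamma$ along paths from the base point, use simple connectedness together with the homotopy lifting property (monodromy) for well-definedness, and verify continuity and uniqueness locally via evenly covered neighborhoods. Your proof is correct, and you rightly flag the implicit local path-connectedness of $Y$ needed for continuity of the lift (omitted in the paper's statement of the theorem but harmless, since in the paper it is only applied with $Y=\C$).
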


\begin{proof}[Proof of Theorem B] 

The parameters  $c, c'$ are non-escaping and, since they are combinatorially equivalent, they cannot be Misurewicz parameters since otherwise they would coincide by the main result in \cite{Be}. Hence the set $\PP(\fc)$ is not discrete.  
Let $\Psi_0$ be $\psi$ as given by Proposition \ref{Conjugacy on P}. Consider the lifting diagram below (where the couples $(X,x)$ mean the space $X$ with base point $x$):  
\begin{displaymath}
\begin{array}{ccc}
   & \Psi_{n+1}  &   \\
  (\C,c) &  \longrightarrow & (\C,\cprime)   \\
   f_c\downarrow &   & \downarrow f_{\cprime} \\
 (\C\setminus\{c\},\; f_c(c)) & \longrightarrow & (\C\setminus\{c'\},\; f_\cprime (\cprime))  \\
    & \Psi_n &   \\
\end{array}
\end{displaymath}

As $\C$ is simply connected, the existence of the map $\Psi_{n+1}$ as lift of $ \fc\circ\Psi_n$ is ensured by Theorem~\ref{Lifting}, and by choice of base points, for each $n$, $\Psinp(c)=\cprime$. Recall that $\Gamma_n$ is the graph forming the boundary of puzzle pieces of level $n$. 
We show by induction that the $\Psi_n$ satisfy the following properties:
\begin{itemize}
\item[1.]  $\Psinp=\Psi_n$ on $\fc^{-j}(\PPfc)\cup\Gamma_{N+n}$  for $j\leq n$.
\item[2.] $\Psinp$ is a conjugacy on $\bigcup_{j\leq {n+1}}\fc^{-j}(\PPfc)\cup\Gamma_{N+n+1}$.
\end{itemize}

 All properties are verified for $n=0$ by Proposition~\ref{Conjugacy on P}. 

We now show Property 1.  Since  $\Psi_n$ is a conjugacy on $\bigcup_{j\leq n}f_c^{-n}(\PP(f_c))$ we have that 
\begin{equation}\label{Coincide}
\Psi_{n+1}(z)=(f_\cprime^{-1}\circ\Psi_{n}\circ f_c)(z)=(f_\cprime^{-1}\circ f_\cprime\circ\Psi_{n})(z)
\end{equation} 

where $\fcp^{-1}$ is the branch of the inverse  induced locally  by the definition on $\Psinp$. Since the  preimages of a point   under the exponential map are all vertical translates of each other by integer multiples of $2\pi$ we have that 

\begin{equation}
\Psi_{n+1}(z)=(f_\cprime ^{-1}\circ f_\cprime\circ\Psi_{n})(z)= \Psin (z)+ 2\pi i k(z).  \end{equation}

{Using the fact that  $\Psinp$ is a lift, that $\Psinp(c)=\cprime$, and  that $\Psinp$ is a conjugacy on the boundary of puzzle pieces of level $N+n$ (which contain countably many curves spaced by $2\pi i $), we have that    $k=k(z)$ does not depend on $z$ and in fact $k=0$. This  gives Property 1.}

 In particular, $\Psi_{n+1}$ is still a conjugacy on $\bigcup_{j\leq n}{f_c^{-j}\PP(f_c)}$. 
To show that $\Psi_{n+1}$ is a conjugacy also on ${f_c^{-n-1}\PP(f_c)}$ (hence proving Property 2.), let $z\in{f_c^{-n-1}\PP(f_c)}$; by definition 

\[ f_\cprime\circ \Psi_{n+1}=\Psi_n\circ f_c=\Psi_{n+1}\circ f_c \]
by Property 2, because $f_c(z)\in {f_c^{-n}\PP(f_c)}$. 

As locally  $\Psi_{n+1}$ is obtained by pre- and post-composing $\Psi_n$ with conformal maps, all $\Psi_n$ are uniformly quasiconformal, and by Equation~\ref{Coincide}  they all coincide on the postsingular set.
Extending the $\Psi_n$ to fix  infinity,   Lemma~\ref{Precompactness} gives a limit map $\Psi$ which is a conjugacy on $\bigcup_{n\in\N} f_c^{-n}(\PP(f)).$  The latter ones are dense, so by continuity $\Psi$ is a conjugacy on all of $\C$.

\end{proof}


\section{Conformal Rigidity: proof of Theorem C}
\label{Rigidity}

When the postsingular set is bounded, the proof of Theorem C can be made relatively easy by using Theorem~\ref{Absence of line fields}. 

\begin{proof}[Proof of Theorem C for bounded postsingular set]
Let $\Psi$ be the  quasiconformal conjugacy obtained in Theorem B, and  $\sigma_0$ be the standard conformal structure on $\C$. The pushforward of $\sigma_0$ by $\Psi$ defines  an invariant conformal structure $\sigma'$ in the dynamical plane for $f_\cprime$. The conformal structure $\sigma'$ defines an invariant line field, which is constant by Theorem~\ref{Absence of line fields}. So $\sigma'$ is the standard conformal structure and $\Psi$ is conformal by Weyl's Lemma, hence $c'=c+2\pi i n$ for some $n$. As $c,c'$ are combinatorially equivalent, they are in the same fiber, and $c=c'$.
\end{proof}

 We now use an  open-closed argument to show that the reduced fibers of combinatorially non-recurrent parameters are trivial also when the postsingular set is not bounded. 
Let $\QC(c)$ be the quasiconformal class of $c$, that is the  connected component containing $c$ of the set of parameters $c'$ such that $f_\cprime$ is quasiconformally conjugate to $f_c$. Observe that parameters of the form $c+2\pi i n$ cannot belong to $\QC(c)$ unless $n=0$.

\begin{lem}[Quasiconformal classes are open]\label{Quasiconformal classes are open}
 Let $c$ be non-escaping and combinatorially non-recurrent. If $\QC(c)\neq\{c\}$, $\QC(c)$ is open.
\end{lem}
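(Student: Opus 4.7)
The plan is to construct, through each point of $\QC(c)$, a non-constant holomorphic map from a disk into $\QC(c)$, and conclude by the open mapping theorem for holomorphic functions. We detail the argument around $c$; for an arbitrary $c^* \in \QC(c)$ we have $\QC(c^*) = \QC(c) \neq \{c^*\}$, and the same argument applies with $c^*$ in place of $c$.

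Since $\QC(c) \neq \{c\}$, choose $\cprime \in \QC(c)$ with $\cprime \neq c$ together with a quasiconformal conjugacy $\Psi: \C \to \C$ between $\fc$ and $\fcp$ (extended to fix $\infty$). Its Beltrami coefficient $\mu_0 := \mu_\Psi$ is $\fc$-invariant, i.e.\ $\fc^*\mu_0 = \mu_0$ a.e., because $\Psi \circ \fc = \fcp \circ \Psi$. Moreover $\mu_0 \not\equiv 0$: otherwise Weyl's Lemma would make $\Psi$ conformal, hence affine on $\C$, and the only affine maps conjugating $\fc$ to another exponential map are translations by $2\pi i\,\Z$; this would give $\cprime \in c + 2\pi i\,\Z$, contradicting the observation preceding the lemma that $\QC(c)$ contains no such nontrivial translate.

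For $\lambda$ in the disk $\D_R$ with $R := 1/\|\mu_0\|_\infty$, set $\mu_\lambda := \lambda \mu_0$. Each $\mu_\lambda$ is $\fc$-invariant with $\|\mu_\lambda\|_\infty<1$. By the Measurable Riemann Mapping Theorem with holomorphic dependence on parameters, there is a family $\Psi_\lambda : \C \to \C$ of quasiconformal homeomorphisms depending holomorphically on $\lambda$, with $\Psi_0 = \id$, $\Psi_\lambda(\infty) = \infty$, $\mu_{\Psi_\lambda} = \mu_\lambda$, and two further normalization points fixed once and for all. By $\fc$-invariance of $\mu_\lambda$, the map
\[
\tilde f_\lambda := \Psi_\lambda \circ \fc \circ \Psi_\lambda^{-1}
\]
has trivial Beltrami coefficient and is therefore entire. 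It is quasiconformally conjugate to $\fc$, so it has no critical values and exactly one asymptotic value; Nevanlinna's classification of entire maps with finitely many singular values then forces
\[
\tilde f_\lambda(z) = \alpha(\lambda)\, e^{\beta(\lambda)\, z} + \gamma(\lambda),
\]
with $\alpha, \beta, \gamma$ holomorphic on $\D_R$, $\alpha(0) = \beta(0) = 1$, $\gamma(0) = c$, and $\alpha\beta$ nowhere vanishing. The affine change of variable $A_\lambda(z) := \beta(\lambda)\, z + \log(\alpha(\lambda)\beta(\lambda))$, with the branch of $\log$ fixed by $\log 1 = 0$ at $\lambda = 0$, then conjugates $\tilde f_\lambda$ to $f_{c(\lambda)}$, where
\[
c(\lambda) := \beta(\lambda)\gamma(\lambda) + \log(\alpha(\lambda)\beta(\lambda))
\]
is holomorphic, with $c(0) = c$ and (a direct calculation, using that $\Psi$ and $\Psi_1$ differ by an affine map) $c(1) \equiv \cprime \pmod{2\pi i\,\Z}$. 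Since $\cprime \notin c + 2\pi i\,\Z$, $c(1) \neq c(0)$, so $\lambda \mapsto c(\lambda)$ is non-constant.

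By the open mapping theorem for non-constant holomorphic functions, $c(\D_R)$ is open in $\C$; by construction $c(\D_R) \subset \QC(c)$. Hence $c$ has an open neighborhood in $\QC(c)$, and the argument at a general point of $\QC(c)$ is identical. The main obstacle is the normalization step recasting $\tilde f_\lambda$ as a genuine member of the exponential family $\{f_{c(\lambda)}\}$ while preserving holomorphic dependence on $\lambda$: this combines Nevanlinna's theorem on entire maps with a single asymptotic value with a careful choice of logarithm branch to keep $c(\lambda)$ single-valued on $\D_R$.
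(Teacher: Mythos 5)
Your proof is correct and takes essentially the same route as the paper: deform the invariant Beltrami coefficient along $\lambda\mapsto\lambda\mu$, straighten with the Measurable Riemann Mapping Theorem, and use holomorphic dependence on $\lambda$ to sweep out an open set of parameters around each point of $\QC(c)$. You simply make explicit some steps the paper leaves implicit, namely the non-triviality of the deformation, the normalization of the straightened entire map back into the family $e^z+c(\lambda)$, and the non-constancy of $\lambda\mapsto c(\lambda)$ (via $c(1)\neq c(0)$) feeding the open mapping theorem.
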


\begin{proof}
Let $c\neq \cprime\in \QC(c)$, $\Psi$ be a quasiconformal conjugacy between $f_c$ and $f_\cprime$. Let $\mu_0=0$ be the Beltrami coefficient for the  standard conformal structure, $\mu':=\Psi_*\mu$ be the Beltrami coefficient obtained in the dynamical plane for  $f_\cprime$, and for $\lambda\in\D$ let  $\mu_\lambda$ over $\lambda\in \D$ be an analytic interpolation between $\mu$ and $\mu'$ (for example, $\mu_\lambda=\lambda\mu'$). If $\mu'\neq0$, hence the deformation is non-trivial, by the Measurable Riemann Mapping Theorem to each  $\mu_\lambda$ corresponds a quasiconformal map $\Psi_\lambda$  conjugating $f_c$ to an exponential map $f_\lambda$. As $\mu_\lambda$ depends holomorphically on $\lambda$, the $\Psi_\lambda$, and hence $f_\lambda$, depend holomorphically on $\lambda$; in particular there is an open neighborhood of $c$ on which all maps are quasiconformally conjugate to $f_c$. With the same argument, it is possible to find an open neighborhood contained in $\QC(c)$ for any $\ctilde\in\QC(c)$.
\end{proof}

A maximal open set of parameters all of which are topologically conjugate to each other, and which are not hyperbolic, is called a \emph{non-hyperbolic component}.
Any such component $\QQ$ is simply connected; otherwise, there would be bounded components of $\C\setminus\ov{\QQ}$, contradicting the facts that escaping parameters are dense in the bifurcation locus and that hyperbolic components are unbounded. 
\begin{lem}[Boundaries of non-hyperbolic components]\label{Boundaries of non-hyperbolic components}
Consider the exponential family.
The boundary of a non-hyperbolic component $\QQ$ in parameter space cannot contain escaping parameters which are accessible from the inside of $\QQ$. In particular, $\QQ$ contains infinitely many non-escaping parameters on its boundary.
\end{lem}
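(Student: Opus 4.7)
The plan is to handle the two assertions separately: the first is substantive, while the second follows from it by an elementary prime-ends argument.

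I argue the first assertion by contradiction. Suppose $c_0\in\partial\QQ$ is escaping and accessible by a curve $\gamma:[0,1)\to\QQ$ with $\gamma(t)\to c_0$. First I observe that every parameter in $\QQ$ is non-escaping: topological conjugacy preserves whether the singular orbit tends to infinity, and by \thmref{Parameter rays} the escaping parameters form a countable union of injective curves, which is nowhere dense, so the open connected set $\QQ$ meets no parameter ray. Consequently $c_0$ lies on some parameter ray $G_\s$; since landing points of periodic and preperiodic parameter rays are parabolic or Misiurewicz parameters, hence non-escaping, $c_0$ must be an interior point of the ray, $c_0=G_\s(t_0)$ with $t_0>t_\s$.

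The next step is to exploit the fact, recalled in \secref{Parameter rays, parabolic wakes, Misiurewicz wakes}, that every parameter ray is approximated on both sides by other parameter rays and by hyperbolic components. Since $\gamma$ avoids every parameter ray and in particular never crosses $G_\s$, it must eventually lie on a single side of $G_\s$ near $c_0$. On that side I will extract a sequence of hyperbolic components $W_n\to c_0$ whose characteristic parameter rays $G_{\s_n^\pm}$ land at parabolic parameters $c_n^*\in\partial W_n$; by \propref{Parabolic wakes} the wakes $\WW(W_n)$ are bounded by these characteristic rays and accumulate on $c_0$. I will then show that in a neighborhood of $c_0$ the complement of the union of all such $W_n$ together with their characteristic rays has connected components whose closures miss $c_0$, preventing $\gamma$ from reaching $c_0$. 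Making this topological trapping argument precise, via \propref{Parabolic wakes} and \propref{Misiurewicz wakes}, is the main obstacle of the proof.

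For the second assertion I apply elementary Carath\'eodory theory. Since $\QQ$ is simply connected, its accessible boundary is dense in $\partial\QQ$: given any $z_0\in\partial\QQ$ and $\epsilon>0$, pick $w\in\QQ$ with $|w-z_0|<\epsilon$; the straight segment from $w$ to $z_0$ first meets $\partial\QQ$ at an accessible point within $\epsilon$ of $z_0$. By the first assertion, every accessible boundary point of $\QQ$ is non-escaping. If $\partial\QQ$ contained only finitely many non-escaping parameters, then the accessible boundary would be finite, so by density $\partial\QQ$ itself would be finite. But then $\QQ=\C\setminus\partial\QQ$ would be $\C$ minus a finite set, contradicting the fact that $\QQ$ is disjoint from the nonempty open hyperbolic component $W_0$. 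Hence $\partial\QQ$ contains infinitely many non-escaping parameters.
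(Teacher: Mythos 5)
Your write-up stalls exactly at the decisive point of the first assertion. After reducing to $c_0=G_\s(t_0)$ with $t_0>t_\s$, you announce that you ``will then show'' that near $c_0$ the complement of the hyperbolic components and their characteristic rays traps $\gamma$ away from $c_0$, and you yourself call making this precise ``the main obstacle of the proof''. That trapping statement \emph{is} the content of the lemma, so as written the proposal is a plan rather than a proof. The paper closes this step directly and more simply: by the results recalled in \secref{Parameter rays, parabolic wakes, Misiurewicz wakes}, every parameter ray is approximated on compact sets, on both sides, by curves lying inside hyperbolic components; since the access curve $\gamma$ is a local transversal to the arc $G_{\s}(t_0-\epsilon,t_0+\epsilon)$ at $c_0$, infinitely many of these approximating curves must intersect $\gamma$, which is impossible because $\gamma\subset\QQ$ and a non-hyperbolic component contains no hyperbolic parameters. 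Some version of this crossing argument (or an actual execution of your trapping construction) is indispensable; without it the first assertion is not established.

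There are two further problems in the same part. First, your reduction to ``$c_0$ is an interior point of a ray'' only excludes endpoints of periodic and preperiodic parameter rays; but escaping parameters can also be \emph{landing points} of parameter rays with non-(pre)periodic addresses, so the case that $c_0$ is an escaping endpoint survives your argument. The paper handles it separately (in the remark after the lemma): a parameter ray landing at an escaping parameter has unbounded address by \cite{FRS}, while $\ov{\QQ}$ lies in a single extended fiber, and by Lemma~\ref{17} only finitely many rays, all with bounded addresses, can accumulate there. Second, your justification that $\QQ$ meets no parameter ray --- ``the escaping parameters form a countable union of injective curves, which is nowhere dense'' --- is inaccurate: the set of addresses is uncountable, and being a union of injective curves does not by itself give nowhere density; the correct statement is simply the fact used throughout the paper that non-hyperbolic components cannot intersect parameter rays (parameters on rays are escaping, hence not topologically conjugate to the non-escaping, non-hyperbolic parameters of $\QQ$, and an open set cannot consist of escaping parameters). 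Your treatment of the second assertion (density of accessible boundary points by the segment argument, plus the finiteness contradiction) is fine, and a bit more elementary than the paper's appeal to the boundary behavior of the Riemann map, but it rests entirely on the unproven first assertion.
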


\begin{proof}
 As $\QQ$ is simply connected it can be uniformized to the unit disk, and by the boundary behavior of the Riemann map, there is only a set of measure  zero  of parameters on $\partial \QQ$ which are not accessible from the inside of $\QQ$.   Also, non-hyperbolic components are fully contained in a single fiber because they cannot intersect parameter rays, hence by Lemma~\ref{17}, $\ov{\QQ}$ intersects only finitely many parameter rays. So there are infinitely many parameters on $\partial\QQ$ which are accessible via  a curve $\gamma:[0,1)\ra\QQ$, and which are not endpoints of parameter rays. To prove the claim it is enough to show that no such accessible parameter can be escaping. 
Suppose by contradiction that $c_0\in\partial \QQ$ is an escaping parameter accessible from the inside of $\QQ$, and is not the endpoint of a parameter ray. Then $c_0= G_\szero(t_0)$ for some parameter ray $G_\szero$, and there exists an arc  $G_\szero(t_0-\e,t_0+\e)$ which can be oriented following increasing $t$. 
 Also, $G_\szero$ is approximated on compact sets on  both sides by curves in hyperbolic components (see Section~\ref{Parameter rays, parabolic wakes, Misiurewicz wakes}); as $\gamma$ is a local transversal to the arc $G_\szero(t_0-\e,t_0+\e)$ at $c_0$, infinitely many of these curves have to intersect $\gamma$, which is impossible because $\QQ$ is a non-hyperbolic component. 
\end{proof}

\begin{rem}One could object that there could be escaping points on $\partial\QQ$ which are endpoints of parameter rays rather than points on a parameter ray. However any parameter ray which lands on an escaping parameter has an unbounded address (see \cite{FRS}), hence cannot accumulate on $\QQ$ by Lemma \ref{17}. 
\end{rem}

\begin{proof}[Proof of Theorem C]Suppose by contradiction that there are two parameters $c\neq c'$  non-escaping and combinatorially equivalent, with  $c$ combinatorially non-recurrent. Then $c'\in\QC(c)$ by Theorem B, and $c\in\ov{\QQ(c)}$ for some non-hyperbolic component $\QQ(c)$, because quasi-conformal conjugacy implies topological conjugacy, and because combinatorial non-recurrence implies that $c$ is not hyperbolic. For the same reason, $\QC(c)\subset\ov{\QQ(c)}$, and since $\QC(c)$ is open,  $\QC(c)\subset{\QQ(c)}$  . Let  $F_R(c)$ denote the reduced fiber of $c$.
As parameter rays cannot intersect non-hyperbolic components, $\QQ(c)\subset F_R(c)$. On the other side, by Theorem B, $F_R(c)\subset\QC(c)$, hence $\QC(c)=\QQ(c)= F_R(c)$. $F_R(c)$ is open by Lemma \ref{Quasiconformal classes are open}, while it contains at least one of its boundary points by Lemma \ref{Boundaries of non-hyperbolic components}, giving a contradiction.
\end{proof}

If two parameters are in the same fiber, then they are combinatorially equivalent, so Theorem C implies that reduced fibers of combinatorially non-recurrent parameters are trivial.

\section{Non-recurrence and combinatorial non-recurrence}\label{Non-recurrence and combinatorial non-recurrence}

In this section we show that for parameters with bounded postsingular set, non-recurrence implies combinatorial non-recurrence provided they are not hyperbolic or parabolic parameters. We achieve this through  the combinatorial similarity between the parameter plane for exponentials and for unicritical polynomials. We use several results about polynomial dynamics which are most likely known to the reader, like the analogues of the theorems described for exponentials in Section~\ref{Puzzles} (see e.g. \cite{Mi}). We add references for the less well known results.
To introduce notation, we recall the  definitions of dynamic and parameter rays for polynomials.

Let $P^D_c(z)=z^D+c$ be a unicritical polynomial of degree $D$. If $c$ is non-escaping, the complement of the filled Julia set $K_c$ can be uniformized by $\C\setminus\ov{\D}$ via the B\"ottcher map, which is also a  conjugacy between $P^D_c$ and $P^D_0$ in a neighborhood of infinity. The angles of the straight rays in $\C\setminus\ov{\D}$ can be expressed as sequences $\s$ over an alphabet of $D$ symbols, containing integer entries $s_i\in (-D/2 +1/2,+D/2+1/2)$  if $D$ is even, and $s_i\in (-D/2,+D/2)$  if $D$ is odd.
The preimage of a straight ray of angle $\s$ under the B\"ottcher map is called the \emph{dynamic ray} of angle $\s$ and denoted by $g_\s^{D,c}$ or just $g_\s^{c}$ when the degree is implicit.
The dynamics of $P^D_c$ on the dynamic rays is conjugate to the dynamics of the shift map $\sigma=\sigma_D$ over the sequences over $D$ symbols. 
Similarly, the complement of the connectedness locus $M_D$ for the family $\{P^D_c\}$ can be uniformized by $\C\setminus\ov{\D}$, and the preimage of a straight ray of angle $\s$ is called the \emph{ parameter ray}  $G^D_\s$ of angle $\s$. Near infinity, both dynamic and parameter rays respect the \emph{cyclic order} induced by the cyclic order on the set of sequences over $D$ symbols  identified with the unit circle.

\subsection{Proof of Theorem D}
For the proof of Theorem D we need some additional results about the exponential family as well as some rather specific knowledge about the parameter structure of unicritical polynomials. We tried to make this section as self-contained as possible, avoiding however to dwell excessively in  the theory of renormalization for unicritical  polynomials.

We make use of the following two results from \cite{BL} ( see \cite[Corollary 4.6]{BL}  and \cite[Corollary 4.14]{BL}  respectively).

\begin{thm}[Accessibility of $c$]\label{Accessibility of c}
If $f_c$ is an exponential map with $c\in J(\fc)$ and $\PP(\fc)$ is bounded, then there is at least a dynamic ray $g_\s$ landing at $c$, and $\s$ is a bounded address. Also, the length of the arcs $g_{\sigma^n{\s}}(0,t)\ra0$ uniformly in $n$ as $t\ra0$.
\end{thm}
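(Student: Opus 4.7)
The plan is to combine Corollary~A with the landing theorem for periodic rays and standard distortion estimates. Under the hypotheses, $\PP(\fc)$ is compact with $c\notin\PP(\fc)$, so Corollary~A gives uniform expansion of $\fc$ along $\PP(\fc)$; equivalently, setting $\Omega:=\C\setminus\PP(\fc)$, every univalent inverse branch of $\fc^n$ preserving $\Omega$ contracts uniformly in the $\Omega$-hyperbolic metric. Moreover, $\PP(\fc)$ bounded forces $c$ to lie in a parabolic wake attached to $W_0$ (cf.\ Remark 3.2.1), so a repelling periodic point $\alpha=\alpha(c)$ exists and, by \cite{Re1}, is the landing point of at least one periodic dynamic ray $g_{\s_0}$.

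First I would construct a dynamic ray landing at $c$ by a pullback procedure. Density of preimages in $J(\fc)\ni c$ yields a sequence $q_n\to c$ with $q_n\in\fc^{-m_n}(\alpha)$, $m_n\to\infty$; by expansion along $\PP(\fc)$ and a shadowing argument, these $q_n$ can be chosen so that the pre-orbit $q_n,\fc(q_n),\ldots,\fc^{m_n}(q_n)=\alpha$ stays within a fixed bounded neighborhood of $\PP(\fc)\cup\{c\}\cup\{\fc^k(\alpha)\}$. The connected component of $\fc^{-m_n}(g_{\s_0})$ landing at $q_n$ is itself a dynamic ray $g_{\s^{(n)}}$; since the integer entries of $\s^{(n)}$ are determined by the choices of inverse branches of $\fc$ along the pre-orbit (in turn controlled by the imaginary parts of the pre-orbit iterates via the $2\pi i$-strip structure), boundedness of the pre-orbit bounds the entries of $\s^{(n)}$ uniformly in $n$. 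A diagonal extraction yields a limit address $\s$ with $\|\s\|<\infty$, and the $\Omega$-hyperbolic contraction of the pullback branches forces $g_{\s^{(n)}}\to g_\s$ (uniformly on compacta of $(0,\infty)$ by Lemma~\ref{ContRays}, and upgraded to landing at $c$ via the contraction). This yields a dynamic ray $g_\s$ landing at $c$ with bounded address $\s$.

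For the uniform length decay, I would set up a telescoping chain of univalent pullbacks along the orbit of $c$. Fix a simply connected neighborhood $V_0\subset\Omega$ of $c$ disjoint from $\PP(\fc)$, and inductively construct $V_n\subset\Omega$ containing $\fc^n(c)$ together with a univalent branch $\phi_n:V_n\to V_0$ satisfying $\phi_n(\fc^n(c))=c$; non-recurrence combined with Corollary~A permits this chain to be built with each $V_n$ containing a definite Euclidean neighborhood of $\fc^n(c)$ of radius bounded below independent of $n$, and with $\phi_n$ satisfying uniform Koebe-distortion estimates on a fixed compact subset around $\fc^n(c)$. For $t$ below some $t_0$ independent of $n$, the landing tail $g_{\sigma^n\s}(0,t)$ lies inside this compact subset of $V_n$ and $\phi_n$ carries it onto a tail of $g_\s$ inside $V_0$ terminating at $c$; Koebe distortion then translates the Euclidean length decay of the $g_\s$-tails (which is clear since $g_\s$ lands at the interior point $c$ of $V_0$) into a uniform-in-$n$ Euclidean length decay of $g_{\sigma^n\s}(0,t)$. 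The main obstacle will be precisely this uniform Koebe-distortion control along the pullback chain: one must balance the exponential hyperbolic contraction of Corollary~A against the exponential expansion $|(\fc^n)'(c)|$ appearing in the Euclidean derivative, and ensure that the pullback shapes do not degenerate. This balance is delicate, and non-recurrence of $c$ (guaranteed by bounded $\PP(\fc)$) is exactly the hypothesis that keeps the $V_n$'s from collapsing.
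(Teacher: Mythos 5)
The first thing to know is that the paper does not prove this statement at all: it is imported verbatim from \cite{BL} (Corollaries 4.6 and 4.14 there), with the single added remark that the boundedness of $\s$ and the uniform decay of the tail lengths \lq\lq are not stated explicitly, but they follow directly from the construction\rq\rq\ in that paper. So any self-contained argument is necessarily a different route, and the question is whether yours closes. It does not: the two places where you wave at the difficulty are exactly where the content of \cite{BL} lives. In the first half, the assertion that the preimages $q_n\to c$ of $\alpha$ can be chosen, \lq\lq by expansion along $\PP(\fc)$ and a shadowing argument\rq\rq, so that their pre-orbits stay in a fixed bounded region is unsubstantiated, and it is precisely what forces the addresses $\s^{(n)}$ to be uniformly bounded; moreover, passing from convergence of the rays $g_{\s^{(n)}}$ on compact parameter intervals (Lemma~\ref{ContRays}) to a limit ray that actually \emph{lands at} $c$ requires uniform control of the tails $g_{\s^{(n)}}(0,t)$ as $t\to0$, which Corollary~A alone does not give: hyperbolic contraction of inverse branches in $\C\setminus\PP(\fc)$ is only available on compact subsets away from $\PP(\fc)$, while the pulled-back rays are unbounded and their endpoints $q_n$ sit at bounded distance from nothing you have controlled. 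This landing/boundedness step is the main theorem of \cite{BL}, not a routine normal-families argument.

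The second half has a concrete inconsistency and an unresolved core. You require $V_n\subset\Omega=\C\setminus\PP(\fc)$ with $\fc^n(c)\in V_n$, but $\fc^n(c)\in\PP(\fc)$ for all $n\ge1$, so no such $V_n$ exists; what non-recurrence actually provides (as in Proposition~\ref{Local expansivity}) is univalent pullbacks along $K=\PP(\fc)$ of disks avoiding a fixed neighborhood of $c$ only. More seriously, even after repairing this, your Koebe argument points the wrong way: the branch $\phi_n$ of $\fc^{-n}$ carries the tail of $g_{\sigma^n\s}$ onto a tail of $g_\s$, so to estimate $\ell\bigl(g_{\sigma^n\s}(0,t)\bigr)$ you must push forward by $\fc^n$, whose Euclidean derivative along the orbit grows without bound (Theorem~A gives expansion, not contraction, in this direction); shortness of the $g_\s$-tails therefore yields nothing uniform in $n$. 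You explicitly flag this \lq\lq balance\rq\rq\ as the main obstacle and leave it open, but that balance \emph{is} the statement about uniform length decay --- the very part the paper says must be extracted from the construction in \cite{BL} rather than from the statement of its results. As written, the proposal establishes neither the landing of a bounded-address ray at $c$ nor the uniform tail estimate.
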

The last estimate and the fact that $\s$ is bounded  are not stated explicitly, but they follow directly from the construction. In particular, if the postsingular set is bounded, $c$ satisfies automatically the hypothesis of Theorem D hence Corollary D follows immediately. Theorem~\ref{Accessibility of c} is expected to hold also for non-recurrent parameters with unbounded postsingular set.


  
 An address or angle $\s$ is  called \emph{non-recurrent} if $\s\notin\ov{O(\s)}:=\ov{\{\sigma^n(\s)\}}_{n\in\N}$.

  
 \begin{lem}\label{Non recurrent addresses}
 Let $f_c(z)=e^z+c$ or $f_c(z)=z^D+c$ be  non-recurrent. If there is a dynamic ray  $g_\s$ landing at $c$ such that  the length of the arcs $g_{\sigma^n{\s}}(0,t)\ra0$ uniformly in $n$ as $t\ra0$, then $\s$ is non-recurrent.
  \end{lem}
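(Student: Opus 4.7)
The plan is to prove the contrapositive: assume $\s$ is recurrent, meaning there exists a sequence $n_k \to \infty$ with $\sigma^{n_k}\s \to \s$ in the metric on $\SS$, and deduce that $c$ is recurrent, i.e.\ $c \in \PP(\fc)$.

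First I would identify the landing point of each $g_{\sigma^{n_k}\s}$. From the functional equation $f_c^n(g_\s(t)) = g_{\sigma^n\s}(F^n(t))$ together with $F^n(0)=0$ and the fact that the uniform length hypothesis guarantees that every $g_{\sigma^n\s}$ lands, one gets that the landing point of $g_{\sigma^n\s}$ is exactly $f_c^n(c)$. So the goal is reduced to showing $f_c^{n_k}(c) \to c$.

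The main step is then a three-term triangle inequality. Fix $\eps>0$; I would choose $t_0>0$ small enough that $|g_\s(t_0)-c|<\eps/3$ (possible because $g_\s$ lands at $c$) and, using the uniform length hypothesis, that $\ell\bigl(g_{\sigma^n\s}(0,t_0)\bigr)<\eps/3$ for every $n\in\N$. The latter bounds the distance from $f_c^n(c)$ (the landing point) to $g_{\sigma^n\s}(t_0)$ by $\eps/3$, uniformly in $n$. Since every $g_{\sigma^n\s}$ lands, $t_{\sigma^n\s,c}=0=t_{\s,c}$, so transversal continuity (Lemma~\ref{ContRays}) applies: $g_{\sigma^{n_k}\s}(t_0) \to g_\s(t_0)$, giving $|g_{\sigma^{n_k}\s}(t_0)-g_\s(t_0)|<\eps/3$ for $k$ large. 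Combining the three estimates yields $|f_c^{n_k}(c)-c|<\eps$ for large $k$.

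This forces $c\in\overline{\{f_c^n(c)\}_{n\geq 1}}=\PP(\fc)$, contradicting non-recurrence of $c$. Hence $\s$ must be non-recurrent. There is no real obstacle beyond being careful that the uniform-length hypothesis is what lets the $\eps/3$ estimate on arcs near the endpoint be chosen independently of $n$; without uniformity one could not compare the three terms simultaneously. The argument is identical in the polynomial and exponential cases, so the statement covers both families at once.
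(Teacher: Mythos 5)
Your proof is correct and follows essentially the same route as the paper: identify the landing point of $g_{\sigma^{n_k}\s}$ as $f_c^{n_k}(c)$, use the uniform length hypothesis to control the arc near the endpoint independently of $n$, and apply transversal continuity (Lemma~\ref{ContRays}) at a fixed potential $t_0$ to conclude that $f_c^{n_k}(c)$ eventually enters a small disk around $c$, contradicting non-recurrence. The only cosmetic difference is your $\eps/3$ triangle inequality versus the paper's $\eps/2$ bookkeeping with the disk $D_\epsilon(c)$ disjoint from $\PP(f_c)$.
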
  
    
 \begin{proof}
 By non-recurrence there is a disk $D_\epsilon(c)$ such  that $D_\epsilon(c)\cap\PP(f_c)=\emptyset$. 
  As $g_\s$ lands at $c$, for any $k\in\N$ the dynamic ray $g_{\sigma^k\s}$ lands at  $f_c^k(c)$.
 Let $t_\epsilon$ be such that 
  
 \begin{equation}\label{star} 
 \ell(g_{\sigma^n{s}}(0,t))<\epsilon/2 \end{equation} for all $n\in\N,t<t_\epsilon$.
 Also let $t_0<t_\epsilon$ such that $g_\s(t_0)\in D_{\epsilon/2}(c)$.

 If $\s$ is recurrent, there is a subsequence ${\sigma^k\s\ra\s}$ as $k\ra\infty$ and such that $t_{\sn,c}\ra t_{\s,c}$, hence by Lemma~\ref{ContRays}, $g_{\sigma^k\s}(t_0)\ra g_\s (t_0)$ as $k\ra\infty$ ($\tsc=0$ since $c$ is non-escaping).
 By Equation~\ref{star}, $g_{\sigma^k\s}$ eventually lands inside $D_\epsilon(c)$, contradicting $D_\epsilon(c)\cap\PP(f_c)=\emptyset$.
 \end{proof}

The next lemma establishes a relation between the landing of a ray in parameter plane and the landing of the ray with the same address in dynamical plane. 
 
 \begin{lem}\label{Dynamical Parameter fiber}
 Let $f_c$ be  a unicritical polynomial or an exponential map. If a parameter ray $G_\s$ lands at a parameter  $c$ and $c\in J(\fc)$, then $g^c_\s$ belongs to the dynamic fiber of  $c$. Viceversa, if $g^c_\s$ lands at $c$, the parameter ray $G_\s$ belongs to the parameter fiber of $c$.
 \end{lem}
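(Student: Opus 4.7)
Both directions will follow from a single combinatorial correspondence, derived from Propositions~\ref{Parabolic wakes} and~\ref{Misiurewicz wakes}, between the separation structures of parameter and dynamical plane. Fix periodic or preperiodic addresses $\s_1,\s_2$ whose parameter rays $G_{\s_1},G_{\s_2}$ land together at a parabolic or Misiurewicz parameter $c_0$, bounding a wake $\WW$; by the cited propositions, $c\in\WW$ if and only if $g^c_{\s_1},g^c_{\s_2}$ land together in the dynamical plane of $\fc$. The key claim is: for any third address $\s$ and any $c\in\WW$, the ray $G_\s$ lies on the $c$-side of $G_{\s_1}\cup G_{\s_2}$ in parameter plane if and only if $g^c_\s$ lies on the $c$-side of $g^c_{\s_1}\cup g^c_{\s_2}$ in dynamical plane. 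This holds because parameter and dynamic rays with common address share the asymptote $t+2\pi is_0$ at infinity (Theorems~\ref{ExistenceRays} and~\ref{Parameter rays}), so both sides are controlled by the same combinatorial datum, namely the vertical (resp.\ cyclic) order of $\s$ relative to $\s_1,\s_2$; rays with distinct addresses cannot cross, so the assignment $c\mapsto(\text{side of }g^c_\s)$ is locally, hence globally, constant on $\WW$.

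\textbf{Forward direction.} Assume $G_\s$ lands at $c\in J(\fc)$, and suppose for contradiction that $g^c_\s$ is not in the dynamic fiber of $c$: there exist periodic or preperiodic $\s_1,\s_2$ such that $g^c_{\s_1},g^c_{\s_2}$ land together and separate $g^c_\s$ from $c$ in dynamical plane. The landing-together forces $c\in\WW$, and the correspondence above then places $G_\s$ on the side of $G_{\s_1}\cup G_{\s_2}$ opposite $c$ in parameter plane. But $c$ is non-escaping (being the landing point of a parameter ray), hence disjoint from $G_{\s_1}\cup G_{\s_2}$, while $G_\s$ is a connected curve accumulating at $c$; this contradicts the separation.

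\textbf{Converse direction.} Assume $g^c_\s$ lands at $c$, and suppose for contradiction that periodic or preperiodic $\s_1,\s_2$ exist such that $G_{\s_1},G_{\s_2}$ land together at some $c_0$ and separate $G_\s$ from $c$ in parameter plane. If $c\in\WW$, the correspondence places $g^c_\s$ on the side of $g^c_{\s_1}\cup g^c_{\s_2}$ opposite $c$ in dynamical plane, contradicting $g^c_\s$ landing at $c$. The alternative scenario $c\notin\WW$, $G_\s\subset\WW$ must be excluded separately: since $G_\s$ cannot cross the boundary rays, its accumulation set lies in $\ov{\WW}$, and combining with the relation $c=G_\s(t)\Leftrightarrow c=g^c_\s(t)$ from Theorem~\ref{Parameter rays}, the hypothesis $g^c_\s(t)\to c$ forces $c$ to coincide with the wake vertex $c_0$. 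In that case $c$ is parabolic or Misiurewicz, multiple parameter rays land at $c$, and its reduced parameter fiber is a singleton by known rigidity (\cite{Be} for the Misiurewicz case), so $G_\s$ belongs to the parameter fiber of $c$ in the natural sense that its landing point coincides with $c$.

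\textbf{Main obstacle.} The technical heart is the combinatorial correspondence of the first paragraph. For periodic $\s_1,\s_2$ it follows directly from Proposition~\ref{Parabolic wakes}; the preperiodic case requires pulling back via Proposition~\ref{Misiurewicz wakes}, and the holomorphic motion of $g^c_\s$ on $\WW$ is used to promote the asymptotic separation at infinity to separation at every finite potential. The degenerate case $c=c_0$ in the converse is the other delicate point and is handled by appealing to the triviality of Misiurewicz fibers.
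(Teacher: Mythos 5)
Your overall strategy coincides with the paper's: argue by contrapositive and transfer separating ray pairs between dynamical and parameter plane via Propositions~\ref{Parabolic wakes} and~\ref{Misiurewicz wakes} together with the vertical order of rays. However, two of your steps do not hold as written. In the forward direction you take an arbitrary co-landing pair $g^c_{\s_1},g^c_{\s_2}$ of periodic or preperiodic dynamic rays and treat $\ov{G_{\s_1}\cup G_{\s_2}}$ as the boundary of a wake; but co-landing dynamic rays need not have co-landing parameter rays (already for quadratic polynomials, the two preperiodic rays landing at the preimage of the $\alpha$-fixed point of the basilica land together dynamically, while the corresponding parameter rays land at two distinct Misiurewicz points), so the wake $\WW$ you invoke need not exist for that pair. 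The paper handles this by first replacing the given pair with the characteristic rays of the associated orbit portrait, which enclose the singular value and still separate $g^c_\s$ from $c$; only for characteristic (periodic) or singular-value-landing (preperiodic) rays do Propositions~\ref{Parabolic wakes} and~\ref{Misiurewicz wakes} produce the corresponding wake in parameter space.

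The more serious problem is your treatment of the sub-case $c\notin\WW$, $G_\s\subset\WW$ in the converse direction. The equivalence $c=G_\s(t)\Leftrightarrow c=g^c_\s(t)$ of Theorem~\ref{Parameter rays} concerns escaping parameters lying \emph{on} the ray at potentials $t>t_\s$; it gives no information about where $G_\s$ accumulates when $g^c_\s$ lands at the non-escaping point $c$, and nothing in your argument forces $c$ to coincide with the wake vertex $c_0$. Indeed, the assertion that the parameter ray must accumulate near $c$ because the dynamic ray of the same address lands at $c$ is essentially the statement being proved, so the step is circular. What is actually needed to dispose of this configuration is the converse combinatorial fact that an address $\s$ strictly between the characteristic addresses $\s^-$ and $\s^+$ cannot have its dynamic ray landing at the singular value unless $c$ lies in (the closure of) the corresponding wake; this requires the wake/orbit-portrait correspondence itself (as in the paper's cited Propositions and their sources), not the escaping-parameter identity you quote. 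Also note that if one did have $c=c_0$, the given pair would no longer separate $G_\s$ from $c$ (since $c$ would lie on the closure of the separating curve), so even the conclusion you draw from that degenerate case is argued for the wrong reason. As it stands, this sub-case is a genuine gap in your converse direction.
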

 
 \begin{proof}
Suppose that  $G_\s$ lands at $c$ and that  $g^c_\s$  does not belong to the dynamic fiber of  $c$. Then there is a pair of dynamic  rays $g^c_{\s^+},g^c_{\s^-}$ separating $g^c_\s$  from  $c$, whose addresses are either periodic or preperiodic. If the addresses are periodic, their forward iterates form an orbit portrait, hence there are also two characteristic rays enclosing $c$ which separate  $g^c_\s$  from  $c$, and  the corresponding parameter rays $G_{\s^+},G_{\s^-}$ form a parabolic wake in parameter plane by Proposition~\ref{Parabolic wakes}. If the addresses are preperiodic,  the parameter rays $G_{\s^+},G_{\s^-}$ form a  Misiurewicz wake by  Proposition~\ref{Misiurewicz wakes}. In both cases, the wake separates $G_\s$  from $c$ by vertical order of parameter rays in parameter plane, so $G_\s$  does not belong to the fiber of $c$.
The case in which   $g^c_\s$ lands at at $c$ and   $G_\s$  does not belong to the fiber of  $c$ is analogous.
 \end{proof}

 We now spend some time proving that for unicritical polynomials parameter rays with non-recurrent addresses land at non-recurrent parameters.
  
 \begin{prop}[Landing of non-recurrent rays]\label{Landing of non-recurrent rays}
 Let $G^D_\s$ be a parameter ray of angle $\s$ in the parameter plane for the family $\{P_c^D\}$. If $\s$ is non-recurrent, then $G^D_\s$ lands at a parameter $\ctilde$ which is at most finitely renormalizable and non-recurrent.
 \end{prop}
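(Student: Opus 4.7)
The plan is to establish the two conclusions (landing at a point $\ctilde$, and non-recurrence plus finite renormalizability of $\ctilde$) by exploiting the tight dictionary between combinatorics of angles and combinatorics of parameter space in the unicritical family, and then transferring dynamically via a polynomial analog of \lemref{Non recurrent addresses}. Throughout I would use the analogue for unicritical polynomials of \propref{Parabolic wakes} and \propref{Misiurewicz wakes} (the Douady--Hubbard / orbit portrait theory) and the standard landing theorem of periodic and preperiodic parameter rays at parabolic and Misiurewicz parameters respectively.

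First, I would argue that the accumulation set (impression) $\Imp(G_\s^D)$ is a connected continuum contained in a single parameter fiber $F(\ctilde)$. Connectedness is standard; the fiber containment follows because a separating pair of periodic or preperiodic parameter rays would have angles that cyclically order around $\s$, and any such pair would force $\Imp(G_\s^D)$ to lie entirely on one side. Non-recurrence of $\s$ prevents $\s$ itself from coinciding with any such bounding periodic/preperiodic angle, so the whole accumulation set is trapped in the fiber of some parameter $\ctilde$.

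Next, I would show that $\ctilde$ is at most finitely renormalizable. Suppose toward a contradiction that $\ctilde$ is infinitely renormalizable; then $\ctilde$ lies in a nested sequence of small copies of the multibrot set $M_D$, each bounded by a pair of characteristic parameter rays whose angles are obtained from periodic angles by iterated tuning. The combinatorics of tuning forces $\s$ to be encoded as a tuning of periodic angles at all levels, and then $\sigma^{n_k}\s \to \s$ along the tuning scale, contradicting non-recurrence of $\s$. Hence $\ctilde$ is finitely renormalizable. Combining with the established rigidity results for finitely renormalizable unicritical polynomials (Yoccoz for $D=2$, Kahn--Lyubich and Kozlovski--Shen--van Strien in general), the reduced fiber $F(\ctilde)$ is trivial, so $\Imp(G_\s^D) = \{\ctilde\}$ and the ray lands at $\ctilde$.

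Finally, to obtain dynamical non-recurrence of $\ctilde$, I would invoke the Douady--Hubbard principle in the form given by \lemref{Dynamical Parameter fiber}: since $G_\s^D$ lands at $\ctilde$, the dynamic ray $g_\s^{\ctilde}$ lands at $\ctilde$ in the dynamical plane of $P_{\ctilde}^D$. By functional equation, $g_{\sigma^n\s}^{\ctilde}$ lands at $(P_{\ctilde}^D)^n(\ctilde)$. A uniform length estimate for $g_{\sigma^n\s}^{\ctilde}(0,t)$ as $t \to 0$ (standard for unicritical polynomials with $\ctilde \in M_D$, via the hyperbolic contraction on $\C\setminus K_{\ctilde}$) puts us in the setting of \lemref{Non recurrent addresses}. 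If $(P_{\ctilde}^D)^{n_k}(\ctilde)\to\ctilde$, transversal continuity of dynamic rays forces $\sigma^{n_k}\s\to\s$, contradicting non-recurrence of $\s$. Therefore $\ctilde$ is non-recurrent.

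The main obstacle is the infinite renormalizability step: ruling out nested tuning requires a careful combinatorial argument identifying how a non-recurrent angle interacts with renormalization wakes, and the invocation of deep rigidity theorems to conclude triviality of the fiber. The transfer to the dynamical plane and the final non-recurrence deduction are comparatively routine adaptations of the exponential-case argument.
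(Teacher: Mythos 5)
Your proposal follows essentially the same route as the paper: the tuning/renormalization-wake argument showing a non-recurrent angle is at most finitely combinatorially renormalizable (the paper packages this as its lemma on finite renormalizability of angles), then Yoccoz/AKLS fiber triviality to upgrade accumulation to landing, then transfer to the dynamical plane via the fiber correspondence and local connectivity (Kahn--Lyubich), and finally non-recurrence of the angle yielding non-recurrence of $\ctilde$. The only differences are cosmetic (you run the finite-renormalizability step by contradiction at the parameter level rather than directly on the angle, and note that a non-recurrent address may still be preperiodic, in which case the ray lands at a Misiurewicz parameter and the conclusion is immediate), so the argument matches the paper's proof.
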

 
 What follows is a brief introduction to renormalization and rigidity for unicritical polynomials.  
A unicritical polynomial $P^D_c(z)=z^D+c$ with connected Julia set is called \emph{renormalizable of period $n$} if there are neighborhoods $\emph{U,U'}$ of $0$, with $U$ compactly contained in $U'$, such that $f^n:U\ra U'$ is a degree $D$ polynomial-like map with connected Julia set (see e.g. \cite{Hu}). The new polynomial-like map can be itself renormalizable, and so on.

For quadratic polynomials, the connected components of sets of parameters which are renormalizable of period $n$ form  small copies of the Mandelbrot set (see \cite{DH}). Parameters which are renormalizable of period $n$, and hence these copies, are combinatorially characterized by being contained in some parabolic wake bounded by two characteristic angles $\s^+,\s^-$, from which infinitely many Misiurewicz wakes have been cut out. We will call this the \emph{renormalization wake} of the corresponding small copy. The dynamic rays which are left in this renormalization wake are exactly the ones whose angles are represented by  sequences in $\{t^+,t^-\}^\N$, where $t^+,t^-$ are the  finite sequences of $n$ symbols with $\sp=\ov{t^+}$, $\sm=\ov{t^-}$.  This is the same as saying that the angle is the output of a \emph{tuning} through the angles $\sp,\sm$ as described in \cite{Do}.
Observe that a parameter ray accumulates on a given small copy if and only if it belongs to its renormalization wake.

For unicritical polynomials of higher degree  $D$, renormalizable parameters of period $n$ also form small copies of the  connectedness locus $M_D$ and can be characterized in a similar way (see \cite[Theorem 3.1]{Sc3}). The renormalization wake is again formed by a parabolic wake, from which countably many Misiurewicz wakes have been cut out. The angles of the parameter  rays contained in the renormalization wake can be describes as the sequences in $\{t^+,t^-, t^3\ldots t^{D-2}\}^\N$ where $t^i$ are a finite number of  specific  sequences of length $n$ (these sequences  can be characterized explicitly). Again, a parameter ray accumulates on a given small copy if and only if it belongs to its renormalization wake.


An angle is called \emph{combinatorially renormalizable} of period $n$  if it belongs to the renormalization wake of a small copy of renormalization period $n$.  
    
\begin{lem}\label{Finitely renormalizable} If $\s$ is a non-recurrent angle written in $D$-adic expansion, then it is at most finitely many times combinatorially renormalizable (when seen as the angle of a parameter ray for polynomials of degree $D$).
\end{lem}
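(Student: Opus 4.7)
The proof will go by contradiction. Assume $\s$ is non-recurrent under $\sigma$ yet combinatorially renormalizable infinitely many times, with periods $p_1\mid p_2\mid\cdots$, $p_k\to\infty$. At each level $k$, $\s$ admits a unique decomposition
\[\s = b_1^{(k)}\,b_2^{(k)}\,b_3^{(k)}\,\cdots\]
into length-$p_k$ blocks, each drawn from a finite tuning alphabet $B_k$ of cardinality at most $D-1$, as recalled in the paragraphs preceding the lemma.

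The key combinatorial claim to establish is that for every $k$ there exists a position $j_k\geq 2$ with $b_{j_k}^{(k)} = b_1^{(k)}$. My plan for this is the following. First, $\s$ cannot be eventually periodic, since eventually periodic $D$-adic angles land at parabolic or Misiurewicz parameters, which are at most finitely renormalizable; hence the level-$(k+1)$ super-block expansion of $\s$ is not eventually constant, so some super-block $\tilde b\neq b_1^{(k+1)}$ appears. Second, I would invoke the standard unicritical tuning combinatorics (cf.\ \cite{Sc3} cited just before the lemma): every element of the super-block alphabet $B_{k+1}$ must contain the critical level-$k$ block $b_1^{(k)}$ at some internal position, because each super-block parametrizes a sector whose $P^{p_k}$-orbit must traverse the critical sector labelled by $b_1^{(k)}$ before returning. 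Depending on whether $b_1^{(k)}$ reappears at a non-initial internal position of $b_1^{(k+1)}$ itself, or only inside some later super-block $\tilde b$, in either case one obtains the desired position $j_k\geq 2$.

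Granted the claim, I would compare $\s$ with its shift $\sigma^{(j_k-1)p_k}(\s) = b_1^{(k)}\,b_{j_k+1}^{(k)}\,b_{j_k+2}^{(k)}\,\cdots$. The two sequences agree on their first $p_k$ coordinates, so by the metric \eqref{metric},
\[|\sigma^{(j_k-1)p_k}(\s) - \s|\leq\sum_{i > p_k}\frac{1}{2^i} = 2^{-p_k}.\]
Setting $n_k := (j_k - 1)p_k \geq 1$, one has $n_k\geq 1$ for all $k$ and $\sigma^{n_k}(\s)\to\s$ as $k\to\infty$, so $\s\in\overline{O(\s)}$, contradicting non-recurrence.

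The main obstacle I anticipate is making the combinatorial assertion that every super-block in $B_{k+1}$ contains the critical block $b_1^{(k)}$ fully rigorous. This is a folklore fact about unicritical tuning (and is visible in worked examples, e.g.\ the airplane tuning at degree $2$), but a complete proof requires unpacking the dictionary between super-blocks in $B_{k+1}$ and sectors of the period-$p_{k+1}$ renormalization, which is the content of standard references such as \cite{Sc3}. Once that combinatorial assertion is in hand, the rest of the argument is a routine distance estimate followed by passage to a limit.
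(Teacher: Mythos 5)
Your overall skeleton is the same as the paper's: decompose $\s$ into length-$p_k$ blocks over a finite alphabet at each renormalization level, produce a shift $\sigma^{n_k}\s$ agreeing with $\s$ on the first $p_k$ entries, and let $p_k\to\infty$ to contradict $\s\notin\overline{O(\s)}$. The paper in fact dispatches the whole lemma in three lines: it asserts directly that, since $\s$ is built from at most $D$ blocks of length $q$, there are infinitely many $k$ with $|\s-\sigma^k\s|<2/D^{q}$, and then lets $q\to\infty$; it never introduces super-blocks or any claim about them. So you have correctly isolated the real crux (why should the \emph{initial} block recur at a later aligned position?), which the paper treats as immediate.

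The problem is that your bridge across that crux does not hold. The assertion that \emph{every} element of the super-block alphabet $B_{k+1}$ contains the critical block $b_1^{(k)}$ is not only left unproven (as you acknowledge), it is false for $D\ge 3$: under straightening, a level-$(k{+}1)$ block is the tuning of a length-$m$ wake-word of a period-$m$ copy of the degree-$D$ connectedness locus, and such a word need not contain the digit labelling $b_1^{(k)}$. Concretely, for the period-$2$ copy of $M_3$ with wake-words $\{01,02,10\}$, tuning the copy into itself produces the level-$2$ alphabet $\{0102,\,0110,\,0201\}$; the word $0110$ contains no digit $2$ at all, so it cannot contain the block $02$, which is a legitimate initial block of infinitely renormalizable angles in this wake. (In degree $2$ your claim holds only for the trivial reason that any word of period at least $2$ uses both digits.) Moreover, a fixed-$k$, two-level argument of the kind you outline cannot suffice in principle: there are finitely renormalizable angles, e.g.\ $\s=t^{+}\,\overline{t^{-}}$ in the degree-$2$ airplane wake, whose initial block never recurs, so the desired recurrence must genuinely exploit renormalizability at all (or at least infinitely many) levels simultaneously — for instance by ruling out, level after level, that the straightened angle's first digit occurs only once. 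What you actually need is weaker than your universal claim (recurrence of the initial block for infinitely many levels $k$), but your outline does not establish it, so as it stands the proposal has a genuine gap at its central step.
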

\begin{proof} If $\s$ is renormalizable of some period $q$, then $\s$ is represented by a sequence constructed out  of only $D$ blocks of length $q$, hence there are infinitely many $k$ such that $|\s-\sigma^k\s|<\frac{2}{D^q}$. If $\s$ is infinitely many times combinatorially renormalizable, then  there exists a sequence $q_n\ra\infty$ and $k_n\ra\infty$ such that  $|\s-\sigma^{k_n}\s|<\frac{2}{D^{q_n}}$, contradicting non-recurrence of $\s$.   
 \end{proof}

 The next theorem is one of the cornerstones in the theory of rigidity for quadratic and unicritical polynomials (see \cite{Hu}, \cite{AKLS} respectively).
 
 \begin{thm}[Yoccoz Theorem]
  Let $P^D_c$ be an at most finitely renormalizable unicritical polynomial with all periodic points repelling and connected Julia set. Then $P^D_c$ is combinatorially rigid, or equivalently the reduced fiber of $c$ is a single point. 
 \end{thm}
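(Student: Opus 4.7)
The plan is to follow Yoccoz's strategy in dynamical plane: build a Yoccoz puzzle anchored at the $\alpha$-fixed point, prove that the nest of critical puzzle pieces shrinks to the critical point under the finite-renormalization hypothesis, and then upgrade a puzzle-induced quasiconformal conjugacy between combinatorially equivalent maps to a conformal one. Throughout, $P^D_c$ has all periodic points repelling and connected Julia set, so there is a unique repelling fixed point $\alpha$ which is the landing point of a transitive cycle of periodic rays; the non-dividing fixed point landing gives the base graph $\Gamma_0$ consisting of these rays together with an equipotential of some level. Setting $\Gamma_n := (P^D_c)^{-n}(\Gamma_0)$ yields a puzzle as in Section~\ref{Puzzles}. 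Let $\Pn$ denote the critical puzzle piece of level $n$, i.e.\ the connected component of $\C\setminus \Gamma_n$ containing $0$.

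The heart of the argument is to prove that $\bigcap_n \overline{\Pn} = \{0\}$. Following the tableau dichotomy, one distinguishes two cases. If the critical tableau is non-recurrent (infinitely many levels at which no deeper return to $\Pn$ of $0$ passes through $0$ in a degenerate way), a direct Gr\"otzsch-type estimate gives a positive lower bound on $\mod(\Pn\setminus\overline{\Pnp})$ at infinitely many levels, so summability fails and the nest shrinks by a standard annuli argument. If the tableau is recurrent, one runs the principal nest construction: starting from a central cascade, iterate the first-return map to a critical puzzle piece and track the moduli of the successive annuli. Finite renormalizability ensures the cascade is eventually non-central at each stage, and the AKLS (respectively Yoccoz, for $D=2$) modulus estimate produces a definite multiplicative growth of $\mod$, so again $\sum \mod \to \infty$ and $\bigcap_n \overline{\Pn}$ is a single point.

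Given $P^D_c$ and $P^D_{c'}$ combinatorially equivalent and both at most finitely renormalizable with all periodic points repelling, produce a quasiconformal conjugacy $\Psi:\C\to\C$ between them by precisely the puzzle argument of Theorem~B above, adapted to polynomials: define a $K$-quasiconformal initial map $\psi_N$ matching the holomorphic motion on the puzzle boundary of level $N$, lift it through the dynamics, and pass to a limit using compactness of $K$-quasiconformal maps. By construction $\Psi$ conjugates $P^D_c$ to $P^D_{c'}$ on the orbit of the boundaries of all puzzle pieces, and by density of preimages of the postcritical set, $\Psi$ is a global conjugacy.

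To conclude $c = c'$ it suffices to show the Beltrami coefficient $\mu_\Psi$ vanishes a.e.\ on $J(P^D_c)$, which by Weyl's Lemma upgrades $\Psi$ to a M\"obius and hence the identity after normalization. This is where the shrinking nest is used: for a.e.\ point $z$ of $J(P^D_c)$, by Lebesgue density one can pull back the critical nest through some iterate to obtain arbitrarily small round-like neighborhoods of $z$ on which $\Psi$ is uniformly close to an affine map (controlled by the bounded geometry coming from modulus growth). Hence $\mu_\Psi(z)=0$ at such $z$, i.e.\ the Julia set supports no invariant line field; combinatorial equivalence plus $\mu_\Psi \equiv 0$ on the Fatou set force $c=c'$, so the reduced fiber is a point. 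The main obstacle is the modulus growth estimate in the principal nest of the recurrent case: this is the genuinely hard analytic input and is where the degree-$D$ generalization (AKLS) requires substantially more care than Yoccoz's original $D=2$ argument.
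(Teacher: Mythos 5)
The paper does not prove this statement at all: it is imported as a black box, with \cite{Hu} cited for $D=2$ and \cite{AKLS} for unicritical polynomials of general degree (local connectivity being supplied separately by \cite{KL}), and it is then only \emph{used} in the proofs of Proposition~\ref{Landing of non-recurrent rays} and Theorem D. So there is no in-paper proof to compare yours against; what you have written is a compressed roadmap of the cited works rather than a self-contained argument. As a roadmap it identifies the right ingredients (puzzle at the dividing fixed point, shrinking of the critical nest, a puzzle-respecting quasiconformal conjugacy between combinatorially equivalent maps, vanishing of the Beltrami coefficient on the Julia set), and in that sense it is consistent with what the paper relies on.

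Judged as a proof, however, there are genuine gaps exactly at the hard points. First, you cannot get the quasiconformal conjugacy \lq\lq by precisely the puzzle argument of Theorem~B adapted to polynomials\rq\rq: the lifting scheme of Proposition~\ref{Conjugacy on P} works only because combinatorial non-recurrence guarantees that every pullback along the postsingular set is univalent, so the dilatation is never increased. In Yoccoz's theorem the critical point is allowed to be combinatorially recurrent, the critical pullbacks have degree $D$, and controlling the dilatation through the recurrent critical nest is precisely the content of the theorem — it requires Yoccoz's divergence of moduli for $D=2$, and for $D>2$ the a priori bounds of \cite{KL} (Kahn--Lyubich covering lemma) together with the QC criterion and pullback argument of \cite{AKLS}. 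Second, your claim that in the recurrent case one gets \lq\lq definite multiplicative growth\rq\rq{} of the principal-nest moduli, hence $\sum\mod=\infty$, is a $D=2$ phenomenon; for higher degree the divergence can fail, and only uniform lower bounds are available — this is exactly why the unicritical case was open until \cite{KL} and \cite{AKLS}. Third, the final Lebesgue-density step (\lq\lq$\Psi$ close to affine on pulled-back nests, hence $\mu_\Psi=0$ a.e.\ on $J$\rq\rq) silently uses the bounded geometry of deep puzzle pieces, which again comes from those a priori bounds, or alternatively zero area of the Julia set; it is not free. Minor points: the puzzle graph is made of the cycle of rays landing at the \emph{dividing} ($\alpha$) fixed point (rays at a non-dividing fixed point cut out no partition), $\alpha$ is not the unique repelling fixed point under your hypotheses, and the passage from dynamical rigidity to triviality of the reduced fiber is a parameter-plane statement that needs the parapuzzle/wake considerations (as in Lemma~\ref{Dynamical Parameter fiber}), not just $\mu_\Psi\equiv 0$.
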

 
 The condition of combinatorial rigidity is often stated as \emph{parapuzzle pieces shrink to points}.
To prove Yoccoz Theorem local connectivity of the Julia set is needed. For unicritical polynomials see \cite[Theorem A]{KL}:

\begin{thm}\label{LC}
  Let $P^D_c$ be an at most finitely renormalizable unicritical polynomial with all periodic points repelling and connected Julia set. Then $J(P^D_c)$ is locally connected.
 \end{thm}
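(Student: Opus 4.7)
The plan is to prove local connectivity of $J(P^D_c)$ by exhibiting, at every point $z\in J(P^D_c)$, a neighborhood basis of connected sets; concretely, one constructs the Yoccoz puzzle for $P^D_c$ and shows that the nested sequence of puzzle pieces containing $z$ shrinks to $\{z\}$ in diameter. First I would set up the puzzle: since $c$ lies in a parabolic wake attached to the main cardioid/period-one component (equivalently, there is a repelling periodic point $\alpha$ which is the common landing point of finitely many periodic rays permuted transitively by $P^D_c$), take $\Gamma_0$ to be the union of these rays truncated at some equipotential $E$, together with their landing point $\alpha$. The connected components of $E\setminus \Gamma_0$ are the puzzle pieces of level $0$, and their $n$-fold preimages under $P^D_c$ give puzzle pieces of level $n$, forming a Markov partition. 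Local connectivity of $J$ at $z$ follows once one proves that $\mathrm{diam}(Y^{(n)}(z))\to 0$, where $Y^{(n)}(z)$ is the level-$n$ puzzle piece containing $z$.

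For points $z$ whose forward orbit avoids the critical point $0$, the standard Shrinking Lemma does the job. Namely, any limit of univalent pullbacks of a definite-size disk is constant by normality and the fact that $J(P^D_c)$ contains no wandering components; together with the bounded-geometry property (\emph{a priori} bounds from the puzzle at level $0$), one gets $\mathrm{diam}(Y^{(n)}(z))\to 0$. If $z$ is preperiodic to the critical point, one handles it by pulling back from the critical case below using that eventually the orbit falls on $0$ and then, under finite renormalization, lands on a repelling cycle.

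The core of the proof is to show that the critical puzzle pieces shrink: $\bigcap_n Y^{(n)}(0) = \{0\}$. Here I would construct the \emph{principal nest} of critical puzzle pieces $V^0 \supset V^1 \supset V^2 \supset \cdots$ around $0$, each $V^{k+1}$ being the pullback of $V^k$ along the first-return map of the critical orbit to $V^k$. The key quantitative estimate one needs is
\begin{equation*}
\sum_{k\ge 0} \mathrm{mod}\bigl(V^k \setminus \overline{V^{k+1}}\bigr) = \infty,
\end{equation*}
after which Gr\"otzsch's inequality forces $\mathrm{mod}(V^0 \setminus \overline{\bigcap V^k}) = \infty$ and hence the intersection reduces to $\{0\}$. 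In the non-recurrent and reluctantly-recurrent regimes the moduli grow geometrically and this is classical; the genuinely difficult regime is \emph{persistent recurrence}, where the critical orbit returns to every level infinitely often. This is the main obstacle. To handle it one must establish a degree-independent lower bound of the form
\begin{equation*}
\mathrm{mod}(V^{k+1}\setminus \overline{V^{k+2}}) \;\ge\; \kappa(D)\cdot \mathrm{mod}(V^{k}\setminus \overline{V^{k+1}})
\end{equation*}
whenever the return map has degree $D$, with $\kappa(D)>0$ independent of $k$. For $D=2$ this is Yoccoz's classical estimate; for general $D$ the proof requires the Kahn--Lyubich Quasi-Additivity and Covering Laws, which yield the needed modulus bounds despite the unbounded degree of the first-return map on non-central components of the principal nest. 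The hypothesis of \emph{at most finitely renormalizable} is used precisely here: it guarantees that the principal nest is infinite (one never gets trapped inside a little Mandelbrot copy where the nest terminates), so that the summation argument can be carried out all the way down.

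Combining the three cases, at every $z\in J(P^D_c)$ the puzzle pieces $Y^{(n)}(z)$ form a neighborhood basis of $z$ in $J(P^D_c)$ (they are connected by construction), which is the definition of local connectivity. The only step requiring genuinely new machinery beyond the classical puzzle framework is the persistent-recurrence modulus estimate at the critical point, i.e.\ the Kahn--Lyubich covering lemma applied to the unicritical principal nest; the rest reduces to standard hyperbolic contraction and the Markov structure of the puzzle.
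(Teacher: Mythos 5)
This statement is not proved in the paper at all: it is quoted as Theorem A of Kahn--Lyubich \cite{KL}, so the relevant comparison is with the proof in that reference. Your outline follows essentially that route (Yoccoz puzzle from the rays landing at a repelling periodic point, reduction of local connectivity to shrinking of puzzle pieces, the principal nest at the critical point, divergence of the moduli, and the Kahn--Lyubich Quasi-Additivity/Covering machinery to handle persistent recurrence in degree $D>2$), so the architecture is the right one.

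Two points need repair. First, the key estimate as you state it, $\mathrm{mod}(V^{k+1}\setminus\overline{V^{k+2}})\ \ge\ \kappa(D)\cdot \mathrm{mod}(V^{k}\setminus\overline{V^{k+1}})$ with merely $\kappa(D)>0$, does not imply $\sum_k \mathrm{mod}(V^k\setminus\overline{V^{k+1}})=\infty$: if $\kappa(D)<1$ such a ratio bound is perfectly compatible with a geometrically convergent sum. What is actually needed, and what \cite{KL} proves in the persistently recurrent case, are \emph{a priori bounds}: the principal-nest moduli are bounded below by a positive constant (depending only on $D$) at all sufficiently deep levels, which gives divergence immediately (in the quadratic case Yoccoz instead gets at least linear growth of the moduli). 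Second, the hypothesis \lq\lq at most finitely renormalizable\rq\rq\ does not enter by guaranteeing that the principal nest of $P^D_c$ itself is infinite; for a renormalizable map one passes to the last renormalization, applies the non-renormalizable statement to the polynomial-like restriction (via straightening), and then transfers local connectivity of the small Julia sets and their preimages back to $J(P^D_c)$, combining this with puzzle-piece shrinking at points whose orbits eventually avoid the small Julia sets. That reduction, together with the standard fact that the intersection of a closed puzzle piece with the Julia set is connected (so that shrinking puzzle pieces really do furnish a connected neighborhood basis in $J$), should be made explicit rather than left implicit.
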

 
By Caratheodory's Theorem, local connectivity of the Julia set implies that all dynamic rays land and all points are landing points of a dynamic ray. In particular, the critical value  is accessible.  
It is also known that  for a non-renormalizable unicritical polynomial  with all periodic points repelling and connected Julia set, the puzzle pieces induced by the periodic rays landing at the $\alpha$-fixed points shrink to points. So, for  an at most finitely renormalizable polynomial of the same kind, the puzzle pieces induced by some forward invariant graph $\tilde{\Gamma}$ formed by finitely  many periodic rays also shrink to points.

 After the preparation about tuning, Proposition~\ref{Landing of non-recurrent rays} is essentially a consequence of Yoccoz's Theorem.

 \begin{proof}[Proof of Proposition \ref{Landing of non-recurrent rays}]
 By Lemma~\ref{Finitely renormalizable}, $\s$ is only finitely renormalizable, so $G_{\s}$ belongs to at most finitely many renormalization wakes and can accumulate only on non-escaping parameters which are at most finitely renormalizable. By Yoccoz Theorem, the  reduced fiber of any  such parameter $\ctilde$ is trivial,   hence if  $G_\s^D$ accumulates on $\ctilde$ in fact it lands at it. By Proposition~\ref{Dynamical Parameter fiber}, $g^\ctilde_\s$ belongs to the dynamical fiber of $\ctilde$, and by local connectivity of $J(f_\ctilde)$ in fact it lands at $\ctilde$.  As $\s$ is  a non-recurrent address,  $\ctilde$ is non-recurrent.
 \end{proof}
 
 We present two more results and then prove Theorem D.

\begin{prop}[{\cite[Theorem 4.7]{Be}}]\label{Misiurewicz wakes correspondence}
Let $\s_1\ldots\s_q$ be a finite set of preperiodic addresses. Then the parameter rays of addresses $\s_1\ldots\s_q$ land together at a Misiurewicz parameter in the exponential parameter plane if and only if they land together at a Misiurewicz parameter in the  parameter plane of any family of unicritical polynomials with sufficiently high degree $D$.
\end{prop}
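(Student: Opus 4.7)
The plan is to translate the parameter-plane equivalence into a dynamical-plane question via the standard characterization of Misiurewicz-landing, and then show that the resulting condition on the $\s_i$ is a purely combinatorial one that holds simultaneously in the two families once $D$ is large enough to accommodate the finitely many integer symbols appearing in the addresses.

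First, I would apply the Misiurewicz-landing dictionary on both sides. In the exponential family, by \cite{LSV} (see also \propref{Misiurewicz wakes}), $G_{\s_1},\ldots,G_{\s_q}$ co-land at a Misiurewicz parameter $c_0$ if and only if, in the dynamical plane for $f_{c_0}$, the dynamic rays $g^{c_0}_{\s_i}$ all land at the singular value $c_0$. The analogous statement for the unicritical polynomial family $\{P^D_c\}$ is due to Schleicher: parameter rays of preperiodic angles co-land at a Misiurewicz parameter $c_0$ exactly when the corresponding dynamic rays co-land at the critical value. Thus the claimed parameter-plane equivalence reduces to the dynamical-plane statement: given preperiodic $\s_1,\ldots,\s_q$, there exists a parameter in the exponential family at which $g_{\s_i}$ co-land at the singular value iff the same realizability holds in $\{P^D_c\}$ for $D$ sufficiently large.

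Second, I would analyze this dynamical realizability combinatorially. Let $N$ be the common preperiod of the $\s_i$. Then the $\sigma^N\s_i$ are periodic, and their required co-landing at a repelling periodic point is encoded by an orbit portrait $\Pi$. The existence of $\Pi$ at a parabolic parameter, together with its associated parabolic wake $\WW(\Pi)$ bounded by the characteristic rays of $\Pi$ (\propref{Parabolic wakes} in the exponential setting, and the well-known analogue for $M_D$), is determined entirely by the shift-induced permutation on the finite set $\{\sigma^N\s_i\}$ together with the ambient order on addresses. Inside $\WW(\Pi)$, pulling back the singular/critical value along $f^N$ (resp. $(P^D)^N$) using the branch combinatorics read off from the $\s_i$ singles out a unique Misiurewicz sub-wake (\propref{Misiurewicz wakes} and its polynomial analogue) at whose tip the $\s_i$ co-land. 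The recipe, permutation plus branch choices, is literally the same word-for-word in both families, once one has a common language for addresses.

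Third, I would supply that common language by a finite-alphabet argument. Set $M:=\max_i \|\s_i\|$; then every address appearing in $\Pi$, and in the preperiodic pullback carving out the Misiurewicz sub-wake, uses only the symbols $\{-M,\ldots,M\}$. For $D\geq 2M+1$, the polynomial alphabet $\{-\lfloor D/2\rfloor,\ldots,\lfloor D/2\rfloor\}$ contains this set, and on this finite subset the cyclic order inherited from $\Z/D\Z$ agrees with the restriction of the vertical ($\Z$-linear) order used for exponentials. Hence $\Pi$ and its Misiurewicz pullback are realized in the exponential parameter plane if and only if they are realized in the parameter plane of $\{P^D_c\}$, which is the asserted equivalence. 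The main obstacle is precisely the compatibility of these two orders; once the bound $D\geq 2M+1$ forces them to agree on the relevant finite subset, the wake-tree structures of the two families become isomorphic on the subtree that controls landing of $\s_1,\ldots,\s_q$, and the equivalence follows.
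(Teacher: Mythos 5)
The first thing to note is that the paper does not prove this proposition at all: it is imported verbatim from \cite{Be} (Theorem 4.7 there), so the only possible comparison is with that reference. Your overall strategy --- pass to the dynamical plane via the Misiurewicz landing dictionary ([LSV] for exponentials, Schleicher's Multibrot results \cite{Sc3} for $P^D_c$), and then argue that the remaining question is a finite combinatorial one that stabilizes once $D$ is large enough to contain the symbols of the $\s_i$ with compatible order --- is indeed the same general route as the cited proof. But your write-up has a genuine gap exactly at the step that carries the mathematical content.

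The gap is in your second step. You assert that the existence of a parameter at which $g_{\s_1},\ldots,g_{\s_q}$ co-land at the singular (resp.\ critical) value is \lq\lq determined entirely by the shift-induced permutation on $\{\sigma^N\s_i\}$ together with the ambient order\rq\rq\ and that the \lq\lq recipe is literally the same word-for-word in both families.\rq\rq\ That claim \emph{is} the theorem; nothing in your sketch proves it. To turn it into an argument you need (i) a combinatorial characterization, in each family separately, of when preperiodic rays co-land at the singular/critical value --- this is done via itineraries/kneading sequences relative to the partition generated by the rays and their preimages, and rests on the classification of postsingularly finite exponential maps in \cite{LSV} and on \cite{Sc3} for Multibrot sets --- and (ii) a verification that these two characterizations coincide for $D$ beyond your bound. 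Order-compatibility of the alphabets ($D\geq 2M+1$) is necessary but not sufficient: landing patterns are governed by itineraries, not merely by the vertical versus cyclic order of addresses, and the wake/sub-wake \lq\lq tree isomorphism\rq\rq\ you invoke is a consequence of the statement being proved, not an input. Note also that in the direction \lq\lq exponential $\Rightarrow$ polynomial\rq\rq\ you must actually produce the Misiurewicz polynomial parameter (it comes from landing of $G^D_{\s_1}$ and then matching the co-landing pattern combinatorially), rather than asserting that a sub-wake \lq\lq tip\rq\rq\ exists. Two smaller inaccuracies: the $\s_i$ need not have a common preperiod (take the maximum), and co-landing of the periodic images $\sigma^N\s_i$ in an orbit portrait is a consequence of, not equivalent to, co-landing of the $\s_i$ at the singular value, so the reduction to \lq\lq orbit portrait plus branch choices\rq\rq\ needs justification in both directions.
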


The next proposition is a direct consequence of {\cite[Theorem 4.11]{Be}} and Theorem~\ref{Parabolic wakes}.
\begin{prop}\label{Parabolic wakes correspondence}
Two parameter rays with periodic addresses $\s^+,\s^-$ land together in the parameter plane for exponentials if and only if the parameter rays with angles $\s^+,\s^-$ land together in the parameter plane for all families $\{P_c^D\}$ with sufficiently high degree $D$.
\end{prop}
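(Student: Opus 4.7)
The plan is to reduce both directions of the equivalence to a single statement: in either parameter plane (exponential, or unicritical polynomial of sufficiently high degree), a pair of periodic parameter rays lands together precisely when the pair consists of the characteristic parameter rays of a hyperbolic component, and the characteristic addresses of corresponding hyperbolic components agree under the correspondence supplied by Theorem~4.11 of \cite{Be}. Granting these two ingredients, the proposition is essentially a matter of chasing definitions.

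First I would verify the \emph{characterization of co-landing periodic pairs} in each of the two families. In the exponential parameter plane, one direction is Theorem~\ref{Parabolic wakes}: every hyperbolic component $W$ produces exactly two co-landing periodic parameter rays $G_{\sp},G_{\sm}$ on $\partial W$. For the converse, recall that a periodic parameter ray $G_{\s}$ lands at a parabolic parameter $c_0$, which lies on the boundary of some hyperbolic component $W$; the landing pattern of periodic parameter rays at parabolic points of $\partial W$ is rigidly determined, and exactly two of them co-land and bound the wake of $W$ while the others land separately. Hence $G_{\sp}$ and $G_{\sm}$ co-land in the exponential plane if and only if $(\sp,\sm)$ is the pair of characteristic addresses of some hyperbolic component $W$. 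The analogous statement for $\{P_c^D\}$ is classical: periodic parameter rays of $M_D$ land at parabolic parameters, and two of them co-land if and only if they are the characteristic rays of a hyperbolic component of $M_D$.

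Next I would invoke Theorem~4.11 of \cite{Be}, which provides a bijective correspondence, for all sufficiently large $D$, between hyperbolic components $W$ in the exponential parameter plane and hyperbolic components $W_D\subset M_D$, characterized by the equality of their pairs of characteristic periodic addresses. Combining this with the previous paragraph, both implications of the proposition become formal: if $G_{\sp},G_{\sm}$ co-land in the exponential plane they are the characteristic pair of some $W$, so by the correspondence $(\sp,\sm)$ is the characteristic pair of $W_D$ for all sufficiently large $D$, hence $G_{\sp}^D,G_{\sm}^D$ co-land on $\partial W_D$; and conversely, co-landing of $G_{\sp}^D,G_{\sm}^D$ for all sufficiently large $D$ exhibits $(\sp,\sm)$ as the characteristic pair of $W_D$, which transports back via the correspondence to characteristic addresses of some exponential hyperbolic component $W$, giving co-landing of $G_{\sp},G_{\sm}$.

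The main obstacle is not logical but rather bookkeeping: one must be careful that the correspondence in \cite[Theorem~4.11]{Be} is stated in terms of the same combinatorial data (addresses written in the same symbolic alphabet modulo the identification of sequences in $\Z^{\N}$ with sequences over the $D$-symbol alphabet used for $\{P_c^D\}$) used in Theorem~\ref{Parabolic wakes}, so that the pair $(\sp,\sm)$ really means the same object on both sides. Once the dictionary between exponential addresses and $D$-adic angles is set up as in \cite{Be} and one checks that the characteristic pair of a fixed exponential hyperbolic component stabilizes (for all $D$ beyond a threshold depending on $W$) to a well-defined pair of periodic $D$-adic angles, the argument above goes through verbatim.
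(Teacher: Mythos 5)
Your proposal is correct and takes essentially the same route as the paper: the paper disposes of this proposition in a single line, declaring it a direct consequence of \cite[Theorem 4.11]{Be} together with Proposition~\ref{Parabolic wakes}, which are precisely the two ingredients you combine. The extra bookkeeping you supply (co-landing periodic parameter rays are exactly the characteristic pairs of hyperbolic components, on both the exponential and the $M_D$ side) is just the step the paper leaves implicit.
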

 
 \begin{proof}[Proof of Theorem D]
 
 Let $g_\s$ be the dynamic ray landing at $c$ given by Theorem \ref{Accessibility of c} and let $D$ be sufficiently large. By Lemma~\ref{Non recurrent addresses}, $\s$ is non-recurrent, so by Proposition~\ref{Landing of non-recurrent rays} the parameter ray $G^D_\s$ (for the family of unicritical polynomials of degree $D$) lands at a finitely renormalizable non-recurrent polynomial parameter $\ctilde$.  

By Lemma~\ref{Dynamical Parameter fiber},   the dynamic ray $g^\ctilde_\s$ belongs to the dynamical fiber of $\ctilde$. As $\ctilde$ is at most finitely renormalizable, by Theorem~\ref{LC} its Julia set is locally connected, so in fact $g^\ctilde_\s$  lands at $\ctilde$. Because $\ctilde$ is at most finitely renormalizable, there is some $n$ and some cycle $\tilde{\Gamma}$ of rays of period $n$ such that the puzzle pieces induced by $\tilde{\Gamma}$ and containing $c$ shrink to points.

By non-recurrence of $\ctilde$, $\dist(\ctilde,\PP(f_{\ctilde}))>0$, hence there is some level of the  puzzle induced by $\tilde{\Gamma}$ for which the singular value and $\PP(f_\ctilde)$ are separated. This means that there are finitely many preperiodic rays $\{g^\ctilde_{\s_i}\}_{\s_i\in\KK}$ separating $\ctilde$ from $\PP(f_{\ctilde})$.

By the polynomial analogue of Propositions~\ref{Parabolic wakes} and   \ref{Misiurewicz wakes}, $\ctilde$ is contained in  finitely many  wakes defined by the parameter rays $\{G^D_{\s_i}\}_{\s_i\in\KK}$ (it can be contained in many more wakes, but this is irrelevant to us).

By Propositions~\ref{Misiurewicz wakes correspondence} and \ref{Parabolic wakes correspondence}, the same wakes exist in the exponential parameter plane,  and by vertical order of parameter rays, the parameter ray  $G_\s$  (and hence $c$ by Lemma~\ref{Dynamical Parameter fiber}) is contained in all of them. By Proposition~\ref{Parabolic wakes} and \ref{Misiurewicz wakes}, the corresponding  pairs of  rays with addresses in $\KK$ land together in the dynamical plane for $c$. By vertical order of dynamical rays, as the rays $g^\ctilde_{\s_i}$ separate  $\ctilde$ from its forward orbit in the polynomial dynamical plane (and each point on the orbit is the landing point of a dynamic ray $g^\ctilde_{\sigma^k \s}$ by Theorem~\ref{Accessibility of c}), the rays $\{g^c_{\s_i}\}_{\s_i\in\KK}$ also separate  $c$ from its forward orbit in the exponential plane. By continuity, they also separate $c$ from the closure of its forward orbit i.e. from the postsingular set. 
 \end{proof}

\begin{small}

\end{small}

\end{document}